\declaretheorem[name=Theorem,within=section]{thm}
\declaretheorem[name=Fact,sibling=thm]{fact}
\declaretheorem[name=Corollary,sibling=thm]{cor}
\declaretheorem[name=Lemma,sibling=thm]{lemma}
\declaretheorem[name=Proposition,sibling=thm]{prop}
\declaretheorem[name=Claim,sibling=thm]{claim}
\crefname{thm}{theorem}{theorems}
\newcommand{\bG}{\mathbf{G}}
\newcommand{\bGs}{\mathbf{G^*}}
\newcommand{\bM}{\mathbf{M}}
\newcommand{\bR}{\mathbf{R}}
\newcommand{\bRs}{\mathbf{R^*}}
\newcommand{\NN}{\mathbb{N}}
\newcommand{\RR}{\mathbb{R}}
\newcommand{\cA}{\mathcal{A}}
\newcommand{\cB}{\mathcal{B}}
\newcommand{\cC}{\mathcal{C}}
\newcommand{\cF}{\mathcal{F}}
\newcommand{\cG}{\mathcal{G}}
\newcommand{\cH}{\mathcal{H}}
\newcommand{\cI}{\mathcal{I}}
\newcommand{\cM}{\mathcal{M}}
\newcommand{\cP}{\mathcal{P}}
\newcommand{\cS}{\mathcal{S}}
\newcommand{\cX}{\mathcal{X}}
\newcommand{\cY}{\mathcal{Y}}
\newcommand{\fC}{\mathfrak{C}}
\renewcommand{\Pr}{\mathbb{P}}
\newcommand{\Ex}{\mathbb{E}}
\newcommand{\1}{\mathbbm{1}} % indicator
\newcommand{\Bin}{\mathrm{Bin}}
\newcommand{\Hyp}{\mathrm{Hyp}}
\newcommand{\Cop}{\mathrm{Cop}}
\newcommand{\Gnm}{G(n,m)}
\newcommand{\Gnmp}{G(n,m+1)}
\newcommand{\Gnmdp}{G(n,m+d+1)}
\newcommand{\Gnp}{G_{n,p}}
\newcommand{\eps}{\varepsilon}
\newcommand{\br}[1]{\llbracket{#1}\rrbracket}
\renewcommand{\le}{\leqslant}
\renewcommand{\ge}{\geqslant}
\newcommand{\deficit}{\mathrm{def}}
\newcommand{\ext}{\mathrm{ext}}
\renewcommand{\int}{\mathrm{int}}
\newcommand{\crit}{\mathrm{crit}}
\newcommand{\core}{\mathrm{core}}
\newcommand{\Core}{\mathrm{CORE}}
\newcommand{\eq}{\mathrm{eq}}
\author{Ilay Hoshen}
\address{School of Mathematical Sciences, Tel Aviv University, Tel Aviv 6997801, Israel}
\email{ilayhoshen@gmail.com}
\author{Wojciech Samotij}
\address{School of Mathematical Sciences, Tel Aviv University, Tel Aviv 6997801, Israel}
\email{samotij@tauex.tau.ac.il}
\author{Maksim Zhukovskii}
\address{Department of Computer Science, University of Sheffield, Sheffield S1 4DP, UK}
\email{m.zhukovskii@sheffield.ac.uk}
\thanks{This research was supported by: the Israel Science Foundation grant 2110/22; the grant 2019679 from the United States--Israel Binational Science Foundation (BSF) and the United States National Science Foundation (NSF); and the ERC Consolidator Grant 101044123 (RandomHypGra).}
\title{Stability of large cuts in random graphs}
\date{}
\begin{document}

\begin{abstract}
  We prove that the family of largest cuts in the binomial random graph exhibits the following stability property:
  If $1/n \ll p = 1-\Omega(1)$, then, with high probability, there is a set of $n - o(n)$ vertices that is partitioned in the same manner by all maximum cuts of $\Gnp$.
  Moreover, the analogous statement remains true when one replaces maximum cuts with nearly-maximum cuts.

  We then demonstrate how one can use this statement as a tool for showing that certain properties of $\Gnp$ that hold in a fixed balanced cut hold simultaneously in all maximum cuts.
  We provide two example applications of this tool.
  First, we prove that maximum cuts in $\Gnp$ typically partition the neighbourhood of every vertex into nearly equal parts; this resolves a conjecture of DeMarco and Kahn for all but a~narrow range of densities $p$.
  Second, for all edge-critical, nonbipartite, and strictly $2$-balanced graphs $H$, we prove a lower bound on the threshold density $p$ above which every largest $H$-free subgraph of $\Gnp$ is $(\chi(H)-1)$-partite.
  Our lower bound exactly matches the upper bound on this threshold recently obtained by the first two authors.
\end{abstract}

\maketitle

\section{Introduction}
\label{sec:introduction}

Let $r\ge 2$ be an integer.  An \emph{$r$-cut} in a graph $G$ is a partition of its vertex set into $r$ subsets. The \emph{size} of a cut is the number of edges of $G$ with endpoints in different parts of the cut.  A \emph{maximum $r$-cut} of $G$ is an $r$-cut that has the largest size (among all $r$-cuts in $G$). 
Since the problem of determining the size of a maximum $r$-cut in a graph
is NP-hard for every $r \ge 2$, much effort has been devoted to studying maximum cuts in random graphs, which can be viewed as the average-case variant of the general problem.
In spite of this, we still know relatively little even about maximum $2$-cuts in the binomial random graph $\Gnp$, except in the regime $p = O(1/n)$, see~\cite{CGHS04,DMS17,GL18,GKK18}.  In particular, the state-of-the-art result~\cite{C-OMS06} falls short of determining the precise asymptotics of the second-order term\footnote{It is easy to see that the first-order term is $(1-1/r)n^2p/2$.} in the typical size of a maximum $r$-cut in $\Gnp$ in the regime $p \gg 1/n$, determining it only up to a multiplicative constant.

The main obstacle in studying properties of maximum cuts in random graphs is that, even if a given property holds with high probability\footnote{With probability approaching 1 as $n\to\infty$; in what follows, we will write `whp' for brevity.} for a fixed cut, commonly this probability is not sufficiently close to one to warrant a union bound over all (exponentially many in the number of vertices) choices of a cut.
In an attempt to remedy this, Brightwell, Panagiotou, and Steger~\cite{BriPanSte12} proved that, if $1/n \ll p \le 1/2$, then whp any two maximum 2-cuts in $\Gnp$ differ in a small number of vertices.  Although this implies that the number of maximum 2-cuts in $\Gnp$ is typically subexponential in $n$, this is still not enough to treat a maximum cut as a fixed cut.  Brightwell, Panagiotou, and Steger also suggested that whp $\Gnp$ has the following stronger property:  There are disjoint vertex sets $S_1$ and $S_2$, each containing nearly $n/2$ vertices, that are contained in opposite parts of every maximum 2-cut in $G_{n, p}$.  Intuitively, such property could prove useful in approximating a maximum cut by a fixed cut.   This is because additionally requiring that $S_1$ and $S_2$ are inclusion-maximal with the above property ensures that the pair $\{S_1, S_2\}$ is unique.

In this paper, we not only confirm the prediction of Brightwell, Panagiotou, and Steger but also show how one may exploit this `clustering' property of maximum cuts while studying properties of $\Gnp$.
Let $b_r(G)$ denote the maximum size of an $r$-cut of $G$.  The \emph{deficit} of an $r$-cut in $G$ is the difference between $b_r(G)$ and the size of the cut.
Given integers $d \ge 0$ and $r \ge 2$ and a real $\alpha > 0$, we say that a graph $G$ \emph{admits a $(d,r,\alpha)$-core} if there exist $r$ pairwise-disjoint sets of vertices $S_1, \dotsc, S_r$ of size exceeding $(1/r-\alpha) \cdot |V(G)|$ each that are contained in different parts of every $r$-cut of $G$ with deficit at most $d$.
(We will give a precise definition of a core in \Cref{sec:rigidity}.)
The following statement is an abbreviated, qualitative version of our main result.
We postpone the full, unabbreviated  statement to \Cref{sec:rigidity}.

\begin{thm}
  \label{thm:main-rigidity-theorem}
  The following hold for every integer $r \ge 2$, real $\alpha > 0$, and all $p = p(n)$ satisfying $1/n \ll p = 1-\Omega(1)$:
  \begin{enumerate}[label=(\roman*)]
  \item
    \label{item:rigidity-1-statement}
    If $d \ll \sqrt{np}$, then whp $\Gnp$ admits a $(d,r,\alpha)$-core.
  \item
    \label{item:rigidity-0-statement}
    There exists a $C = C(\alpha) >0$ such that, for every $d \ge C\sqrt{np}$, whp no set of $\alpha n$ vertices is entirely contained in a single part of every $r$-cut with deficit at most~$d$.
  \end{enumerate}
\end{thm}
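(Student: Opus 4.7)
Both parts pivot on analysing the local statistic $\Delta_{ij}(v) := d_{T_j}(v) - d_{T_i}(v)$ for any partition $T = (T_1, \dotsc, T_r)$ and $v \in T_i$, $j \ne i$. Conditioning on $G - v$ decouples the vector $(d_{T_j}(v))_j$ into $r$ independent binomials of $\Theta(n/r)$ trials each, so each $\Delta_{ij}(v)$ has standard deviation $\Theta(\sqrt{np})$; this pins the critical threshold to $d \asymp \sqrt{np}$. Throughout, I would fix a reference maximum $r$-cut $T^* = (T_1^*, \dotsc, T_r^*)$, which whp is balanced to within $o(n)$, and take as the candidate core
\begin{equation*}
  S_i := \{v \in T_i^* : \Delta_{ij}(v) > Kd \text{ for every } j \ne i\},
\end{equation*}
for a sufficiently large constant $K = K(\alpha)$.

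For part~(i), the size bound $|S_i| \ge (1/r - \alpha)n$ whp is obtained by a $G - v$ conditioning: a local CLT on the $r$ independent binomials $d_{T_j^*}(v)$ shows that $\Pr[v \in S_{i(v)}^*]$ is within $\alpha/(2r)$ of $1/r$ whenever $Kd = o(\sqrt{np})$, and a vertex-exposure Azuma--Hoeffding martingale lifts this to the desired lower bound. The main work is the invariance claim: for every cut $T'$ with $\deficit(T') \le d$, a suitable permutation $\pi$ satisfies $S_i \subseteq T'_{\pi(i)}$ for every $i$. The key identity is that, after canonical relabeling with disagreement set $M$, summing the single-vertex switching costs at $T^*$ gives
\begin{equation*}
  \sum_{v \in M} \Delta_{i(v), k(v)}(v) = \deficit(T') + O(e(M)),
\end{equation*}
where $i(v)$ and $k(v)$ are the $T^*$- and $T'$-indices of $v$ respectively. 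If some $v \in S_i$ lay in $T'_{\pi(k)}$ with $k \ne i$, then $v$ alone contributes more than $Kd$ to the left-hand side, while the right-hand side is at most $d + O(e(M))$; a uniform bound on $e(M)$ via typical edge counts in $\Gnp$ then yields a contradiction once $K$ is large.

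For part~(ii), given $U$ with $|U| = \alpha n$, either $T^*$ already separates $U$ or $U \subseteq T_i^*$ for some $i$; in the latter case it suffices to find $v \in U$ and $j \ne i$ with $\Delta_{ij}(v) \le d$, because moving $v$ to $T_j^*$ produces a cut with deficit exactly $\Delta_{ij}(v) \le d$ that separates $v$ from $U \setminus \{v\}$. The required bound is $|\overline S_i| < \alpha n$ whp, where $\overline S_i := \{v \in T_i^* : \min_{j \ne i} \Delta_{ij}(v) > d\}$. The same $G-v$ conditioning as in part~(i) gives $\Pr[v \in \overline S_{i(v)}^*]$ comparable, after rescaling by $\sqrt{np}$, to the probability that the gap between the two smallest of $r$ standard Gaussians exceeds $C\sqrt{r/2}$; choosing $C = C(\alpha)$ large makes this less than $\alpha r/2$, and vertex-exposure concentration then yields $|\overline S_i| < \alpha n$ whp.

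The main obstacle is controlling $e(M)$ uniformly over all competing cuts $T'$ in the invariance step for part~(i): $M$ is not known a priori, yet $e(M) \ll (K-1)d$ must hold to close the contradiction. I would handle this by first establishing a weaker closeness statement ($|M| = o(n)$ via a Brightwell--Panagiotou--Steger-style argument) to ensure $M$ is small, and then using the typical edge count $e(M) = O(|M|^2 p)$ together with taking $K = K(\alpha)$ large enough. The bounded-$d$ regime, in particular $d = 0$, requires further care because maximum cuts are typically non-unique due to tied vertices, and the definition of $S_i$ must be strengthened so as to survive chains of zero-cost swaps.
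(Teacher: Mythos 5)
Your part~(ii) is, in essence, the same single-vertex switching argument the paper uses (its \Cref{lemma:non-rigidity}): at a maximum cut $T^*$ the sum $\sum_{j\ne i}\Delta_{ij}(v)$ equals the total deficit of the $r-1$ one-vertex swaps, so it suffices to count vertices with all gaps large, and a straightforward edge-count deviation bound (rather than your Gaussian heuristic) makes that uniform over all maximum cuts. This part of your proposal is workable.

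Your part~(i), by contrast, takes a genuinely different route from the paper and, as sketched, does not close. The paper never constructs the core explicitly from one reference cut; it instead lower-bounds $\Ex\bigl|\eq_d^r(G(n,m))\bigr|$ by a coupling argument that compares $\eq_{d+1}^r$ in $G(n,m)$ with $\eq_d^r$ in $G(n,m+1)$ and telescopes from $d$ down to $-1$ (\Cref{lemma:eq-d-d-plus-1,cor:eq-d-d-plus-1}), then combines this with balancedness of small-deficit cuts to get rigidity (\Cref{thm:rigidity}), and only afterwards extracts a core (\Cref{lemma:core,cor:core}). Your construction $S_i=\{v\in T_i^*:\Delta_{ij}(v)>Kd\ \forall j\}$ has two real problems.

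First, the invariance step requires $e(M)\ll (K-1)d$, where $M$ is the disagreement set between $T^*$ and a competing cut $T'$ of deficit at most $d$. You propose to get this from a Brightwell--Panagiotou--Steger-type bound $|M|=o(n)$, but even the sharp version $|M|=O(\sqrt{n/p})$ gives only $e(M)=O(|M|^2p)=O(n)$, which is far larger than $d\ll\sqrt{np}$ (and is not $o(d)$ even when $d=\Theta(\sqrt{np})$). At $d=0$ the threshold $Kd=0$ makes $S_i$ the set of strict-gap vertices, and the identity $\sum_{v\in M}\Delta_{i(v),k(v)}(v)=\deficit(T')+O(e(M))$ permits a vertex with $\Delta>0$ to move as long as the $O(e(M))$ term absorbs it; this is not a matter of ``tied vertices'' but of the correction term being structurally too large. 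Second, the size estimate for $|S_i|$ via conditioning on $G-v$ and a local CLT on $(d_{T_j^*}(v))_j$ is not valid as stated: $T^*$ is a maximum cut of the whole graph, so it depends on the edges at $v$, and the $d_{T_j^*}(v)$ are not $r$ independent binomials conditional on $G-v$ (indeed $d_{T_{i(v)}^*}(v)$ is forced to be the minimum). The proposed vertex-exposure martingale has the same difficulty, since exposing a vertex can change the identity of $T^*$ substantially. The paper's expectation/telescoping argument sidesteps all of this precisely because it never fixes a reference cut and never reasons about the disagreement set $M$.
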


\Cref{thm:main-rigidity-theorem} implies, in particular, that whp each maximum $r$-cut in $\Gnp$ `respects' a single partition of all but some $o(n)$ vertices into $r$ parts.
The aforementioned unabbreviated version of the theorem replaces this $o(n)$ with $\omega\sqrt{n/p}$, where $\omega$ is an arbitrary function that tends to infinity with $n$.
While we do believe that this is optimal, in the sense that whp the largest set of vertices that is partitioned the same way by all largest $r$-cuts has size $n - \Omega(\sqrt{n/p})$, currently we cannot even show that the probability that $G_{n,p}$ has a unique largest $2$-cut is bounded away from one.
In order to support our belief, we will at least show that the \emph{expected} number of vertices of $G_{n,p}$ that lie outside of its core is $\Omega(\sqrt{n/p})$. We discuss this in more detail in \Cref{subsection:complement-of-the-core}.

An important aspect of \Cref{thm:main-rigidity-theorem} is that the property of admitting a core is `stable' under small perturbations of the edge set of a graph.  More precisely, if a graph $G$ admits a $(d,r,\alpha)$-core, then any graph $G'$ that is obtained from $G$ by adding/removing some $t \le d$ edges to/from $G$ still admits a $(d-t,r,\alpha)$-core (moreover, the core of $G'$ contains the core of $G$ in a sense that will be made precise by \Cref{cor:core-d-nested} below).
We will be able to exploit this fact when analysing certain properties of $\Gnp$ that concern the distribution of its edges relative to a maximum cut.  Roughly speaking, the stability property of cores will allow one to change a small proportion of adjacencies in $\Gnp$ while keeping the set of maximum cuts essentially unchanged (and thus determined up to the locations of $o(n)$ vertices).

In the second part of this work, we present two applications of \Cref{thm:main-rigidity-theorem}.  First, we resolve (almost fully) a conjecture of DeMarco and Kahn~\cite{DeMKah15Tur} stating that maximum cuts partition neighbourhoods of all vertices almost equally, see~\Cref{subsection:intro-neighbourhood} below.  Second, for every nonbipartite edge-critical and strictly $2$-balanced graph $H$, we prove an optimal lower bound on the threshold probability for the property that every largest $H$-free subgraph of $\Gnp$ is $(\chi(H)-1)$-partite; a matching upper bound on this threshold probability was recently proved by the first two authors~\cite{HosSam}.  This resolves another problem proposed by DeMarco and Kahn~\cite{DeMKah15Tur} and refutes their guess regarding the location of the said threshold.  We present the relevant background, as well as our results, in \Cref{subsection:intro-0-statement}.

\subsection{Neighbourhoods in maximum cuts}
\label{subsection:intro-neighbourhood}
It is easy to show that, as soon as $p\gg \log n/n$, a fixed $2$-cut of $G_{n, p}$ into sets of size $n/2 + o(n)$ (we call such a cut \emph{balanced}) whp divides the neighbourhood of every vertex into parts of size $(1/2+o(1))np$.  Obviously, this is not true for all balanced cuts, but is it true for all maximum cuts?  It was conjectured by DeMarco and Kahn~\cite[Conjecture 13.2]{DeMKah15Tur} that the answer is yes, for all $p \gg \log n/n$.  In \Cref{sec:neighbourhood}, we confirm the conjecture for all $p\gg (\log n)^2/n$.  In fact, with only little extra work, we generalise this result to $r$-cuts and common neighbourhoods of sets of vertices of any constant size.

\begin{thm}\label{thm:neighbourhood}
  Let $k \ge 1$, $r \ge 2$ be integers and let $\varepsilon>0$.  There exists $C>0$ such that, for all $p$ satisfying $p \ge C (\log n / n)^{1/k}$ and $p \gg (\log n)^2/n$, whp every maximum $r$-cut of $\Gnp$ partitions the common neighbourhood of every set of $k$ vertices into parts of size $(1/r \pm \eps)np^k$ each.
\end{thm}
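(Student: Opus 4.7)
The plan is to combine \Cref{thm:main-rigidity-theorem} with Chernoff concentration: the core reduces the analysis of all maximum cuts to an (essentially) fixed partition, and the small non-core is then handled separately.

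I would first apply the unabbreviated form of \Cref{thm:main-rigidity-theorem} with $\alpha = \varepsilon/(100r)$ and $d = 0$ to obtain, whp, a core $(S_1, \dots, S_r)$ of $G_{n,p}$ with $|S_i| = (1/r \pm \alpha) n$ and complement $D := V \setminus \bigcup_i S_i$ of size $|D| \le \omega \sqrt{n/p}$, where $\omega = \omega(n) \to \infty$ may be chosen to grow as slowly as we wish (say $\omega = \log\log n$). Every maximum $r$-cut $(T_1, \dots, T_r)$ contains the core, so after relabelling $T_i \supseteq S_i$, and for every $k$-set $U$ and every $i \in [r]$,
\[
|N(U) \cap T_i| = |N(U) \cap S_i| + |N(U) \cap (T_i \cap D)|.
\]
It therefore suffices to prove whp, uniformly in $U$ and $i$, that $|N(U) \cap S_i| \in (1/r \pm \varepsilon/2) np^k$ and $|N(U) \cap D| \le \varepsilon np^k/2$.

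The main work is the first bound, as $S_i$ depends on $G_{n,p}$. I would decouple $N(U)$ from the core by a two-round exposure: reveal first the edges not incident to $U$, then the (at most $kn$) edges incident to $U$. Conditional on the first round, each vertex $v \in V \setminus U$ lies in $N(U)$ independently with probability $p^k$. The difficulty is that the core is a function of the entire graph, and the $\Theta(knp)$ second-round edges can in principle shift it by more than the $\ll \sqrt{np}$ tolerance in the definition of a $(d, r, \alpha)$-core. I would handle this by preemptively absorbing into ``$D$'' every vertex whose core-status could shift once the second-round edges are added; the stability statement \Cref{cor:core-d-nested} is precisely what controls this absorption, keeping the augmented non-core of order $o(\sqrt{np}\cdot\omega')$ for some slowly growing $\omega'$. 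After the absorption, the augmented core is first-round-measurable, $|N(U) \cap S_i|$ is (up to a negligible error) $\Bin(|S_i|, p^k)$ with mean $(1 + o(1))np^k/r$, and a Chernoff estimate combined with $np^k \gg k \log n$ (from $p \ge C(\log n/n)^{1/k}$) lets this estimate survive a union bound over the $n^k$ choices of $U$.

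For the second bound, the same two-round exposure makes $|N(U) \cap D|$ essentially $\Bin(|D|, p^k)$ with mean $|D|p^k \le \omega \, n^{1/2} p^{k-1/2}$, which is $o(np^k)$ provided $\omega \ll \sqrt{np}$; the hypothesis $p \gg (\log n)^2/n$ guarantees $\sqrt{np} \gg \log n$, so $\omega = \log\log n$ is comfortable. Chernoff plus a union bound over $U$ then closes the estimate, and combining the two bounds yields $|N(U) \cap T_i| \in (1/r \pm \varepsilon)np^k$ as required. The principal obstacle is the decoupling step in the main bound: the second-round edges can in principle reshuffle the maximum cuts and hence the core, and the stability of cores under edge perturbations---the distinguishing feature of \Cref{thm:main-rigidity-theorem}---is tailored to absorb exactly this perturbation while keeping the error negligible under the density assumptions of the theorem.
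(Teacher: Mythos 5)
Your proposal correctly identifies the central difficulty---that the core of $G_{n,p}$ depends on the edges incident to the $k$-set $U$ you are analysing, so a naive two-round exposure breaks the independence you need---but your proposed fix does not work, and the failure is precisely where the paper's argument gets clever.

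The problem is quantitative. Your second round re-exposes \emph{all} edges incident to $U$, which is on the order of $knp$ actual edges. To ``absorb'' this perturbation via \Cref{cor:core-d-nested}, you would need the first-round graph $G'$ to admit a $(d',r)$-core for $d'=\Theta(knp)$; only then would the core of $G=G'\triangle T$ contain that of $G'$ and your pre-absorbed set $D$ be guaranteed to cover every vertex whose core-status can shift. But $knp\gg\sqrt{np}$, and \Cref{thm:main-rigidity-theorem}\ref{item:rigidity-0-statement} says that whp \emph{no} core tolerates deficit $\ge C\sqrt{np}$: there is simply no $(\Theta(knp),r,\alpha)$-core to work with. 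So the absorption step cannot be carried out; there is no first-round-measurable approximate core that survives the re-exposure of all $U$-incident edges. (The fact that your $d=0$ application of \Cref{thm:main-rigidity-theorem} only gives a deficit-$0$ core makes the perturbation budget even tighter, but the issue persists for any admissible $d$.)

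The paper's resolution is to resample \emph{far fewer} edges. Rather than exposing all of $U$'s edges in a second round, the proof picks a uniformly random $t$-element subset $\bR\subseteq N_{\bG}(U)$ with $t=\Theta(\log n)$, and resamples only the $kt=O(\log n)$ edges between $U$ and $\bR$ to obtain $\bGs$. This keeps the perturbation below $\sqrt{np}$ exactly when $p\gg(\log n)^2/n$, which is why that hypothesis appears in the statement. One then shows that, conditioned on $\bGs$, the set $\bR$ is approximately uniformly distributed over $t$-subsets of the vertex set, and that \Cref{cor:core-d-nested} forces $\core_d^r(\bGs)\preceq\core_0^r(\bG)$, so hypergeometric concentration of $|\bR\cap S|$ for the $\bGs$-measurable core components $S$ gives the $e^{-ct}$ bound that survives the union bound over $n^k$ choices of $U$. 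The observation you are missing is that a $\Theta(\log n)$-sized random subsample of $N(U)$ already suffices to detect an $\eps$-imbalance of $N(U)$ across a core part, so you never need to expose the whole of $N(U)$ and never incur a perturbation larger than cores can absorb.
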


It is not hard to see that the lower-bound assumption on $p$ is optimal (up to the value of $C$) for all $k \ge 2$ and $r \ge 2$.
While we do believe that the assumption $p \gg (\log n)^2/n$ (which comes in force only when $k = 1$) may be weakened to $p \gg \log n/n$,
proving this would likely involve significantly new ideas.
The reason why we need this additional assumption is that, in order to overcome the union bound over the choice of the set of vertices whose neighbourhood we are analysing, we need to resample $\Theta(\log n)$ edges of $\Gnp$; however, this seems to require good control over cuts with deficit $\Theta(\log n)$, which we do not have unless $p \gg (\log n)^2/n$, see the second part of \Cref{thm:main-rigidity-theorem}.

\subsection{Sharp thresholds for Simonovits's theorem in $\Gnp$}\label{subsection:intro-0-statement}

The well-known theorem of Tur\'an~\cite{Tur41} states that, for every $r \ge 2$ and all $n$, the largest $K_{r+1}$-free subgraphs of the complete graph $K_n$ are its largest $r$-partite subgraphs.
Simonovits~\cite{Sim68} proved that a similar result holds also for edge-critical graphs;
we say that a graph $H$ is \textit{edge-critical} if $\chi(H \setminus e) = \chi(H) - 1$ for some edge $e \in H$, where $\chi(H)$ is the chromatic number of $H$.

\begin{thm}[\cite{Sim68}]\label{thm:Simonovits}
  If a graph $H$ is edge-critical and $n$ is a sufficiently large integer, then every largest $H$-free subgraph of $K_n$ is $(\chi(H) - 1)$-partite.
\end{thm}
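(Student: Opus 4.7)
The plan is to follow the classical stability-based approach. Set $r := \chi(H) - 1$ and fix an edge $xy \in E(H)$ with $\chi(H - xy) = r$; note that in every proper $r$-colouring of $H - xy$ the vertices $x$ and $y$ must receive the same colour, since otherwise the colouring would extend to a proper $r$-colouring of $H$ itself. Let $G$ be any largest $H$-free subgraph of $K_n$ for $n$ large.

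First, since the Tur\'an graph $T_r(n)$ has chromatic number $r < \chi(H)$ and is therefore $H$-free, extremality gives $e(G) \ge e(T_r(n)) = (1 - 1/r + o(1))\binom{n}{2}$. I would then invoke the Erd\H{o}s--Simonovits stability theorem: for every $\delta > 0$ there exists $\eta > 0$ such that any $H$-free graph on $n$ vertices with at least $(1 - 1/r - \eta)\binom{n}{2}$ edges can be made $r$-partite by deleting at most $\delta n^2$ edges. Applied with $\delta$ small in terms of $H$, this yields a partition $V(G) = V_1 \cup \dotsb \cup V_r$ with at most $\delta n^2$ non-crossing edges. I would then \emph{refine} this partition by repeatedly moving any vertex $v \in V_i$ to the class in which it has the fewest neighbours; this process terminates because each move strictly increases the number of crossing edges. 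In the terminal partition every vertex $v \in V_i$ satisfies $|N(v) \cap V_i| \le |N(v) \cap V_j|$ for all $j$. Combined with near-extremality, a short counting argument then produces $|V_i| = n/r + o(n)$ for every $i$ and a uniform minimum-degree bound $\delta_{\min}(G) \ge (1 - 1/r - \gamma)n$ for some $\gamma = \gamma(\delta) \to 0$ as $\delta \to 0$.

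The main step, and the crux of the proof, is to rule out any edge of $G$ with both endpoints in a single class. Suppose for contradiction that $uv$ is such an edge with $u, v \in V_i$. The minimum-degree bound combined with $|V_j| = n/r + o(n)$ forces $|V_j \cap N(w)| \ge (1/r - O(\gamma))n$ for every $j$ and every $w \notin V_j$; hence for any constant-size $S \subseteq V(G) \setminus V_j$ we have $\left|V_j \cap \bigcap_{w \in S} N(w)\right| \ge (1/r - O(|S|\gamma))n$. Now take any proper $r$-colouring $\phi$ of $H - xy$ (which, by the opening observation, must satisfy $\phi(x) = \phi(y)$), identify the colour class $\phi(x)$ with $V_i$, and greedily embed $H - xy$ into $G$: first set $x \mapsto u$ and $y \mapsto v$, then embed each remaining vertex $z$ to an unused vertex of $V_{\phi(z)}$ lying in the $G$-neighbourhood of every already-embedded vertex of $N_{H-xy}(z)$. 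Each such intersection has size $\Omega(n)$ by the previous bound, so the greedy embedding succeeds; together with the edge $uv$, it produces a copy of $H$ in $G$, contradicting $H$-freeness. Hence $G$ is $r$-partite, as required.
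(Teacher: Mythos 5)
The paper does not prove this theorem—it only cites Simonovits's original 1968 paper—so there is nothing to compare methodology against; I will simply assess your argument on its merits.

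Your high-level outline (extremality $\Rightarrow$ stability $\Rightarrow$ balanced partition with a minimum-degree bound $\Rightarrow$ greedy embedding of $H$ around a non-crossing edge) is the standard modern route to Simonovits's theorem, and the opening observation that every proper $r$-colouring of $H - xy$ must assign $x$ and $y$ the same colour is exactly right. The first genuine gap, however, is in the step ``The minimum-degree bound combined with $|V_j| = n/r + o(n)$ forces $|V_j \cap N(w)| \ge (1/r - O(\gamma))n$.'' This does \emph{not} follow. From $\deg(w) \ge (1 - 1/r - \gamma)n$ and $|V_j| = n/r + o(n)$ alone one gets only $|V_j \cap N(w)| \ge |V_j| - (n/r + \gamma n) = o(n) - \gamma n$, which can be negative; even if you also invoke the optimality of the partition (that $w \in V_i$ has its smallest in-class degree in $V_i$), the best derivable bound is $|V_j \cap N(w)| \ge \frac{1}{2}\big(\deg(w) - (r-2)\max_k|V_k|\big) = \frac{n}{2r} - O(\gamma n)$, a factor of two short of what you claim. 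That shortfall is fatal: already for $H = K_3$ (so $r = 2$, $|V_j| \approx n/2$) the greedy step must locate a common neighbour of $u$ and $v$ inside $V_2$, and two subsets of $V_2$ each of size $\ge n/4 - O(\gamma n)$ can very well be disjoint.

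The standard way to close this gap is \emph{not} a sharper minimum-degree bound (the claimed bound on the neighbourhoods simply cannot hold uniformly for all vertices given only stability-level information); one must instead use that the total number of crossing non-edges is $O(\delta n^2)$, so that the set $B$ of ``bad'' vertices $w$ with $|V_j \setminus N(w)| \ge \beta n$ for some $j$ has size $O(\delta n / \beta)$, and then handle those vertices separately—in particular one must argue that a non-crossing edge cannot have a bad endpoint, which requires a delicate maximality/exchange argument and is, in fact, the heart of Simonovits's proof. A second, smaller, issue: the claimed minimum-degree bound $\delta_{\min}(G) \ge (1 - 1/r - \gamma)n$ for the extremal graph $G$ itself is also not justified by ``a short counting argument''; the usual vertex-removal argument establishes it only after deleting $o(n)$ low-degree vertices, and showing nothing needs to be deleted again requires the bad-vertex analysis you omit.
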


Let us call a graph \emph{$H$-Simonovits}, if each of its largest $H$-free subgraphs is $(\chi(H) - 1)$-partite.
Note that the assumption that $H$ is edge-critical in the above theorem is crucial.
Indeed, adding one edge to a $(\chi(H)-1)$-partite graph cannot introduce a copy of $H$ unless $H$ is edge-critical.
Consequently, if $H$ is not edge-critical, then no graph with chromatic number at least $\chi(H)$ can be $H$-Simonovits (in particular, no $K_n$ with $n\ge\chi(H)$ is $H$-Simonovits).

To the best of our knowledge, the first to study the question of when $\Gnp$ is whp $H$-Simonovits were Babai, Simonovits, and Spencer~\cite{BabSimSpe90}.  They proved that, for every $\ell \ge 1$, $G_{n, p}$ is whp $C_{2\ell+1}$-Simonovits as long as $p \ge 1/2 - \eps_\ell$ for some (small) positive constant $\eps_\ell$ that depends only on $\ell$.  Answering a challenge raised by the authors of \cite{BabSimSpe90}, Brightwell, Panagiotou, and Steger~\cite{BriPanSte12} proved that $G_{n, p}$ is whp $K_{r+1}$-Simonovits, for every $r \ge 2$, already when $p \ge n^{-c_r}$ for some (small) constant~$c_r>0$. 

It is not hard to see that as soon as the expected number of copies of some subgraph $F \subseteq H$ in $\Gnp$ becomes significantly smaller than the expected number of edges, $\Gnp$ cannot be $H$-Simonovits.\footnote{Unless $p = O(1/n)$, since then $\chi(\Gnp) = O(1)$ whp and a more careful analysis is required.}  This implies that $\Gnp$ cannot be $H$-Simonovits whp, unless $p=\Omega(n^{-1/m_2(H)})$, where (we write $e_F$ and $v_F$ for the numbers of edges and vertices of a~graph~$F$, respectively)
\[
  m_2(H) \coloneqq \max\left\{ \frac{e_F - 1}{v_F - 2} \colon F \subseteq H,\ e_F \ge 2\right\}
\]
is the \emph{$2$-density} of $H$.
Kohayakawa, {\L}uczak, and R\"odl~\cite{KohLucRod97} conjectured that, for every nonbipartite graph $H$ (not necessarily edge-critical), when $p \gg n^{-1 / m_2(H)}$, then whp every largest $H$-free subgraph of $G_{n, p}$ is close to being $(\chi(H) - 1)$-partite (i.e., it can be made $(\chi(H)-1)$-partite by removing some $o(n^2p)$ edges).  This was proved in the breakthrough work of Conlon and Gowers~\cite{ConGow16}, under the technical assumption that $H$ is also \emph{strictly $2$-balanced} (i.e., the maximum in the definition of the $2$-density is achieved uniquely at $F = H$), which was later removed by the second author~\cite{Sam14}, using an adaptation of the argument of Schacht~\cite{Sch16}, who proved the slightly weaker assertion that whp every $H$-free subgraph of $G_{n,p}$ has at most $(1-1/(\chi(H)-1)+o(1))\binom{n}{2}p$ edges.

In another major development, DeMarco and Kahn~\cite{DeMKah15Tur,DeMKah15Man} showed that adding an extra polylogarithmic factor in the lower bound on $p$ suffices to show that, when $H$ is a clique, then whp $\Gnp$ is $H$-Simonovits.  Moreover, their lower-bound assumption on $p$ is best possible up to a constant factor.  This result was recently generalised by the first two authors \cite{HosSam} to every nonbipartite, strictly $2$-balanced, and edge-critical graph $H$.

In order to state the main result of~\cite{HosSam}, we need an additional definition.
First, let us denote by $K_r(m)$ the complete, balanced $r$-partite graph with parts of size~$m$ and let $K_r(m)^+$ be the graph obtained from $K_r(m)$ by adding a single edge contained in one of the parts.
(Note that $H$ is edge-critical if and only if $H \subseteq K_{\chi(H) - 1}(m)^+$ for all $m \ge v_H$.)
Letting $\Cop(H, G)$ be  the number of copies of $H$ in $G$, set
\begin{equation}
  \label{eq:pi_H}
  \pi_H \coloneqq \lim_{m \to \infty} \frac{\Cop\big(H, K_{\chi(H)-1}(m)^+\big)}{m^{v_H-2}} > 0.
\end{equation}
Finally, let $\theta_H$ be the positive real satisfying
\begin{equation}
  \label{eq:theta_H}
  (\chi(H)-1)^{2-v_H} \cdot \pi_H \cdot \theta_H^{e_H-1} = 2 - \frac{1}{m_2(H)}.
\end{equation}

\begin{thm}[\cite{HosSam}]
  \label{thm:hoshen-samotij}
  If $H$ is a nonbipartite, edge-critical, strictly $2$-balanced graph and
  \begin{align*}        
    p \ge (\theta_H+\eps) \cdot n^{-1/m_2(H)} (\log n)^{1/(e_H - 1)}
  \end{align*}
  for some positive constant $\eps$, then whp $\Gnp$ is $H$-Simonovits.
\end{thm}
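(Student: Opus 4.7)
The plan is to prove this upper bound via a stability-plus-supersaturation argument, following the blueprint developed by DeMarco and Kahn \cite{DeMKah15Tur,DeMKah15Man} for cliques and adapting it to general strictly $2$-balanced, edge-critical, nonbipartite $H$. Throughout, write $r = \chi(H)-1$ and let $F \subseteq \Gnp$ be a putative largest $H$-free subgraph.

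First, invoke the sparse stability theorem for $H$ of Conlon--Gowers \cite{ConGow16} and Samotij \cite{Sam14}: whenever $p \gg n^{-1/m_2(H)}$, whp every $H$-free $F \subseteq \Gnp$ satisfying $e(F) \ge (1 - 1/r - o(1)) \binom{n}{2} p$ differs in at most $o(n^2 p)$ edges from an $r$-partite subgraph of $\Gnp$. Combined with concentration of the maximum $r$-cut size $b_r(\Gnp)$, this identifies an $r$-cut $\Pi$ of $[n]$ such that, setting $B$ to be the set of edges of $F$ lying inside a part of $\Pi$ and $M$ to be the set of cut edges of $\Gnp$ with respect to $\Pi$ that are missing from $F$, one has $|B| + |M| = o(n^2p)$ and $|M| \le |B|$. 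The last inequality follows because $e(F)$ cannot exceed the size of the cut induced by $\Pi$, which is itself at most $b_r(\Gnp)$, and $F$ was taken to be largest.

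Next, bootstrap to \emph{strong stability}. Once $|B|+|M|$ is small, supersaturation for $H$ together with the definition of $\pi_H$ in \eqref{eq:pi_H} implies that each edge of $B$ is contained in roughly $\pi_H r^{2-v_H} n^{v_H-2} p^{e_H-1}$ \emph{critical} copies of $H$ in $\Gnp$ -- copies whose unique edge lying inside a part of $\Pi$ is that edge. Since $F$ is $H$-free, every such copy must contribute one of its $e_H-1$ cut edges to $M$. A double-counting argument together with Janson-type concentration, applied uniformly over potential $B$, successively reduces the allowed size of $|B|$ to at most $(\log n)^{O(1)}$.

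The technical heart is the sharp covering estimate at the threshold. At $p = (\theta_H + \eps) n^{-1/m_2(H)} (\log n)^{1/(e_H-1)}$, identity \eqref{eq:theta_H} gives $\pi_H r^{2-v_H} n^{v_H-2} p^{e_H-1} = (2 - 1/m_2(H) + \Omega(\eps))\log n$, so each inside edge lies in about $(2 - 1/m_2(H) + \Omega(\eps))\log n$ critical $H$-copies in expectation. The goal is to show that whp, simultaneously for every set $B$ of size at most $(\log n)^{O(1)}$, any cut-edge set $M \subseteq E(\Gnp)$ that meets every critical $H$-copy through $B$ satisfies $|M| \ge (1+\Omega(\eps))|B|$. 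Combined with $|M| \le |B|$ from the stability step, this forces $|B|=0$, i.e., $F$ is $r$-partite. The main obstacle is proving the covering bound uniformly in $B$: the $\binom{\binom{n}{2}}{|B|}$ choices of $B$ must be absorbed by the probability savings coming from forcing $\Theta(|B|\log n)$ critical $H$-copies to be covered by a small $M$, and the factor $(\log n)^{1/(e_H-1)}$ in the threshold is precisely what calibrates this union bound to be tight. Extending the clique argument of DeMarco--Kahn to arbitrary strictly $2$-balanced $H$ further requires careful handling of the overlap structure of critical $H$-copies sharing a cut edge, since $\pi_H$ captures this overlap only on average.
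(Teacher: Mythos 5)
This theorem is not proved in the present paper; it is the main result of the earlier work [HosSam] and is quoted here only as background for the matching lower bound (\Cref{thm:0-statement-Simonovits}), which is what this paper actually establishes. There is therefore no in-paper proof to compare your sketch against. That said, your outline does capture the general flavour of the DeMarco--Kahn / Hoshen--Samotij approach: sparse stability to get $|B|+|M|=o(n^2p)$, a bootstrap to polylogarithmic $|B|$, and a sharp Janson-plus-union-bound covering estimate calibrated by $\theta_H$. As a high-level reconstruction of the cited argument this is on the right track.

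One structurally essential ingredient is absent from your sketch, and it is worth flagging because it is precisely the common thread between [HosSam] and the present paper. Your phrase ``this identifies an $r$-cut $\Pi$'' hides a serious difficulty: the cut $\Pi$ (and with it the sets $B$ and $M$) depends on the random $H$-free subgraph $F$, and a naive union bound over the $\approx r^n$ candidate cuts is catastrophic --- the probability saving of order $\exp\bigl(-\Theta(|B|\log n)\bigr)$ that you describe cannot absorb it. The resolution in [HosSam], following DeMarco--Kahn and developed further in the present paper under the names \emph{rigidity} and \emph{core}, is to show first that whp all near-maximum $r$-cuts of $\Gnp$ agree on a common core of $n-o(n)$ vertices. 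This reduces the effective union bound from ``all $r$-cuts'' to ``placements of $o(n)$ exceptional vertices,'' which the covering estimate can absorb. Without some device of this kind the covering bound cannot be made uniform in $\Pi$, and your argument does not close. Beyond this, the bootstrap from $o(n^2p)$ down to $(\log n)^{O(1)}$, the uniformity in $B$ of the covering estimate, and the control of overlapping critical copies (which you acknowledge as an issue but do not resolve) constitute the bulk of the technical work in [HosSam] and are not supplied by your sketch.
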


In \Cref{sec:0-statement}, we show that $\theta_H n^{-1/m_2(H)} (\log n)^{1/(e_H - 1)}$ is in fact a sharp threshold for the property of being $H$-Simonovits.  This confirms the prediction made in~\cite{HosSam} (and refutes the suggestion of DeMarco and Kahn~\cite{DeMKah15Tur} in the case of when $H$ is a complete graph).
\begin{thm}
  \label{thm:0-statement-Simonovits}
  If $H$ is a nonbipartite, edge-critical, strictly $2$-balanced graph and
  \begin{align*}        
    1/n \ll p \le (\theta_H-\eps) \cdot n^{-1/m_2(H)} (\log n)^{1/(e_H - 1)}
  \end{align*}
  for some positive constant $\eps$, then whp $\Gnp$ is not $H$-Simonovits.
\end{thm}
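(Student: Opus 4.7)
The plan is to show that, whp, $\mathrm{ex}(\Gnp, H) > b_{r-1}(\Gnp)$, where $r \coloneqq \chi(H)$; since every $(r-1)$-partite subgraph of $\Gnp$ has at most $b_{r-1}(\Gnp)$ edges, this strict inequality prevents any largest $H$-free subgraph from being $(r-1)$-partite, and so $\Gnp$ is not $H$-Simonovits. Set $G \coloneqq \Gnp$. To obtain such a subgraph I would fix a maximum $(r-1)$-cut $\mathcal{C} = (V_1, \dotsc, V_{r-1})$ of $G$ and exhibit a monochromatic edge $e = \{u, v\} \in E(G)$ (both endpoints in some $V_i$) for which $K_\mathcal{C} \cup \{e\}$ is $H$-free, where $K_\mathcal{C}$ is the set of edges of $G$ crossing $\mathcal{C}$ (so $|K_\mathcal{C}| = b_{r-1}(G)$). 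Since $H$ is edge-critical, every copy of $H$ inside $K_\mathcal{C} \cup \{e\}$ must use $e$ as its critical edge and embed the remaining $v_H - 2$ vertices into the other parts according to a proper $(r-1)$-colouring of $H - e$; call such an $e$ \emph{good}.

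For a fixed balanced partition and a fixed monochromatic pair $\{u, v\}$, the number of potential ``bad extensions'' is asymptotic to $\pi_H (n/(r-1))^{v_H - 2}$ by \eqref{eq:pi_H}, and each is realised in $G$ independently with probability $p^{e_H - 1}$. Strict $2$-balance $(e_H - 1)/m_2(H) = v_H - 2$ then makes the expected number $\mu$ of realised bad extensions equal to
\[
  \mu = \pi_H (r-1)^{2 - v_H} \theta^{e_H - 1} \log n \quad \text{when } p = \theta n^{-1/m_2(H)} (\log n)^{1/(e_H - 1)},
\]
and definition \eqref{eq:theta_H} of $\theta_H$ yields $\mu \le c \log n$ with some $c < 2 - 1/m_2(H)$ strictly whenever $\theta \le \theta_H - \eps$. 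The FKG inequality applied to the decreasing events ``extension $i$ is not realised'' then gives $\Pr[\text{no bad extension}] \ge \exp(-\mu (1 + o(1))) \ge n^{-c - o(1)}$, and since the event ``$\{u, v\} \in E(G)$'' is independent of the cross-edges, the expected number of good monochromatic pairs is at least $\Omega\!\big(n^{2 - c - 1/m_2(H) - o(1)} (\log n)^{1/(e_H - 1)}\big) \to \infty$, using $2 - c - 1/m_2(H) > 0$. (For $p$ substantially below the threshold, $\mu = o(\log n)$ and the count grows polynomially.)

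The expectation argument above treats the partition as fixed, whereas $\mathcal{C}$ is itself a function of $G$. Here I would invoke Theorem~\ref{thm:main-rigidity-theorem}\ref{item:rigidity-1-statement}: whp $G$ admits a $(d, r-1, \alpha)$-core $(S_1, \dotsc, S_{r-1})$ for some diverging $d$, meaning that $n - o(n)$ vertices are partitioned identically by every maximum $(r-1)$-cut of $G$. Restricting the good-pair search to pairs $\{u, v\} \subseteq S_i$ for some $i$ ensures that any good pair found is monochromatic in every maximum cut $\mathcal{C}$ simultaneously, and the corresponding $K_\mathcal{C} \cup \{e\}$ has $b_{r-1}(G) + 1$ edges and is $H$-free. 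To promote the expectation bound to whp, I would use the second moment method: two good-pair indicators are coupled only through shared bad extensions, and the strict $2$-balance of $H$ guarantees via the standard $2$-density count over rooted subgraphs that each overlap type contributes $o\big((\Ex[\cdot])^2\big)$ to the variance.

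The main obstacle is that the core $(S_1, \dotsc, S_{r-1})$ and the good-pair indicators both depend on the same $G$, so one cannot directly substitute the core partition for a fixed, deterministic one. The natural remedy is a two-round exposure $G = G_1 \cup G_2$ with $G_1 \sim G_{n, p_1}$ of slightly smaller density: Theorem~\ref{thm:main-rigidity-theorem} applied to $G_1$ freezes the core, while the core-stability remark following that theorem (a $(d, r-1, \alpha)$-core remains a $(d - t, r-1, \alpha)$-core after $t$ edge additions) keeps the core intact provided $|E(G_2)| \ll d \ll \sqrt{np}$. Calibrating so that this inevitably thin sprinkle still delivers a good monochromatic pair whp is very tight, and I expect the cleanest resolution will be to bypass the sprinkle entirely by using Theorem~\ref{thm:main-rigidity-theorem} only qualitatively --- to assert that the core partition is asymptotically independent of the randomness driving the good-pair count --- and then to run the second moment directly on $\Gnp$ with the fixed core partition.
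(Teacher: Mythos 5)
Your high-level strategy is the same as the paper's: find a monochromatic edge $e$ inside some part of the (essentially unique) core partition such that $\partial_e\cH$ has no member inside the crossing edges, apply a first-and-second-moment argument, and use the core to tame the union bound over maximum cuts. Your arithmetic with $\theta_H$, $\pi_H$, $m_2(H)$ (including strict $2$-balance forcing $(e_H-1)/m_2(H) = v_H-2$ and the threshold exponent $2-1/m_2(H)$) is also correct. However, there is a genuine gap at exactly the place you flag yourself: the decoupling of the random core partition from the events that count good edges. Your proposed two-round exposure does not work as stated — the sprinkle $G_2$ would need $|E(G_2)| \ll d$ edges to preserve the core via \Cref{cor:core-d-nested}, but that forces $G_2$ to be so sparse that it cannot carry the good-edge search; and the suggested fallback of treating the core as ``asymptotically independent of the randomness driving the good-pair count'' is precisely the unproven statement one needs, not an argument.

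The paper resolves this with two ingredients you do not have. For the lower bound on the count, it uses a \emph{per-edge} switching: fix $e$, let $\bGs := \bG \setminus \bigcup \partial_e\cH[\bG]$; whp $\partial_e\cH[\bG]$ is a matching with $O(\log n)$ edges (this uses \Cref{lemma:Delta-p-partial-H}, i.e.\ strict $2$-balance again), so $|\bG \triangle \bGs| = O(\log n) \ll d := \lceil \log n\rceil^2$, whence $\core_d^r(\bGs) \preceq \core_0^r(\bG)$ by \Cref{cor:core-d-nested}. The core of $\bGs$ is $\bGs$-measurable, and the conditional law of $\bigcup\partial_e\cH[\bG]$ given $\bGs$ on the event that $\partial_e\cH[\bG]$ is a small matching is handled cleanly by \Cref{lemma:matching-resample} (a conditioned product-measure/Harris argument over the matchings of $\partial_e\cH$ outside $\bGs$). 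This is what turns your heuristic ``each extension is realised independently with probability $p^{e_H-1}$'' into an actual inequality conditional on the (random) core. For the second moment, the paper does not try to argue that overlap types are negligible in the presence of a random partition; instead \Cref{lemma:correlation-argument} decouples the event $\{\core_0^r(\bG) = \cC\}$ (increasing in $\ext(\cC)$) from the $H$-freeness event $E_{e,f}(\cC)$ (decreasing in $\ext(\cC)$) via Harris's inequality, reducing the problem to Janson's inequality for a \emph{fixed} $\cC$. Without an argument of this type, running the second moment ``with the fixed core partition'' is circular: the core is a function of the same edge set whose presence you are averaging over, and there is no a priori reason the conditional law given $\{\core_0^r(\bG)=\cC\}$ matches the unconditional one on $\ext(\cC)$.

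One smaller point: your $\mathrm{ex}(\Gnp,H) > b_{r-1}(\Gnp)$ reduction is fine, but showing the existence of a single good edge lying in $\int(\core_0^r(\bG))$ already implies that $(\ext(\Pi)\cap\bG)\cup e$ is $H$-free simultaneously for \emph{every} maximum $(r-1)$-cut $\Pi$ (since $\ext(\Pi) \subseteq \ext^*(\core_0^r(\bG))$), which is how the paper avoids any quantification over cuts once the core exists. You gesture at this but should make it the explicit mechanism rather than ``the good pair is monochromatic in every maximum cut simultaneously,'' which is necessary but not sufficient: you also need the absence of bad extensions relative to $\ext^*(\core_0^r(\bG))$ rather than relative to a single cut.
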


It remains an interesting open problem to extend \Cref{thm:hoshen-samotij,thm:0-statement-Simonovits} to edge-critical graphs $H$ that are not strictly $2$-balanced.  We note that the closely-related problem of describing the typical structure of $H$-free graphs with given order and size, for an arbitrary edge-critical graph $H$, was studied in~\cite{EngSamWar}.

\subsection{Relation to previous works}
\label{sec:relat-prev-works}

The notions of cores and rigidity (defined in
\Cref{sec:rigidity}), which play a central role in our arguments,
were first introduced by DeMarco and Kahn~\cite{DeMKah15Tur}.
In fact, one can infer from the proof of \cite[Lemma~12.3]{DeMKah15Tur} that the
random graph $\Gnp$ is typically close (up to adding/removing a small number of edges) to a graph that has a $(0,r,o(1))$-core.
Finally, even though the result of Brightwell, Panagiotou, and Steger~\cite{BriPanSte12}
does not imply the existence of a core, our proof of the fact that the random
graph typically has a $(0,r,o(1))$-core (the case $d=0$ in \Cref{thm:main-rigidity-theorem}) was inspired by their arguments.

\subsection*{Organisation}

The remainder of this paper is organised as follows.
\Cref{sec:rigidity} is devoted to the proof of \Cref{thm:main-rigidity-theorem}.
In fact, the main results of this section are \Cref{thm:rigidity} and \Cref{cor:core}, the latter of which is a stronger, non-asymptotic version of \Cref{thm:main-rigidity-theorem}.
In the following two sections, we prove  \Cref{thm:neighbourhood,thm:0-statement-Simonovits}, respectively.

\section{Rigidity}
\label{sec:rigidity}

Recall the definitions of an $r$-cut and the size and the deficit of a cut.
In order to quantify the `clustering' property of the family of cuts with a small deficit in a graph $G$, we will count pairs of vertices of $G$ that are never separated by such a cut.
To make this precise, given integers $r \ge 2$ and $d \ge 0$, we say that two vertices are \emph{$(d,r)$-equivalent} in $G$ if they are in the same part of every $r$-cut in $G$ with deficit at most $d$.
Further, we say that $G$ is \emph{$(d,r,\eps)$-rigid}, for some $\eps > 0$, if there are at least $\frac{1-\eps}{r}\binom{n}{2}$ pairs of $(d,r)$-equivalent vertices.
The following theorem, which is the main result of this work, provides a lower bound on the probability that the uniform random graph $G(n,m)$ is $(d,r,\eps)$-rigid for given $d$, $r$, and $\eps$ (that are allowed to depend on $n$ and $m$).

\begin{thm}
  \label{thm:rigidity}
  There exists an absolute constant $C$ such that, for all $\delta,\eps\in(0,1)$ and all nonnegative integers $d, m, n, r$ satisfying $r \ge 2$ and $1 \le m \le (1-\delta)\binom{n}{2}$,
  \[
    \Pr\big(G(n,m) \text{ is not $(d,r,\eps)$-rigid}\big) \le \frac{Cr}{\eps} \cdot \left(\left(\frac{d+1}{\delta}+r\right) \cdot \sqrt{\frac{n}{m}} + \sqrt[4]{\frac{n}{m}}\right).
  \]
\end{thm}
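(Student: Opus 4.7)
The plan is to apply Markov's inequality to an appropriate counting random variable. I would fix a deterministic tie-breaking rule so that every graph $G$ determines a unique maximum $r$-cut $\pi^*(G)=(V_1^*,\dotsc,V_r^*)$, and define $F(G)$ to be the set of vertices $v$ for which some $r$-cut of $G$ of deficit at most $d$ places $v$ in a different part than $\pi^*$, under the relabelling of the cut's parts that minimises its symmetric difference with $\pi^*$. The key observation is that whenever a pair $\{u,v\}$ lies in the same part of $\pi^*$ but is separated by some $r$-cut of deficit $\le d$, at least one of $u,v$ must belong to $F(G)$ (since any relabelling of the witness cut must place $u$ and $v$ in distinct parts, so at least one of them ends up in a part different from $\pi^*$). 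Combined with the convexity estimate $\sum_i \binom{|V_i^*|}{2} \ge \binom{n}{2}/r - n/2$, this gives
\[
  \#\{(d,r)\text{-equivalent pairs in }G\} \;\ge\; \tfrac{1}{r}\tbinom{n}{2} - \tfrac{n}{2} - n\cdot|F(G)|,
\]
so that the failure of $(d,r,\eps)$-rigidity forces $|F(G)| \gtrsim \eps n/r$. Markov's inequality then reduces the theorem to the expectation bound
\[
  \Ex\bigl[|F(G(n,m))|\bigr] \;\le\; C n\left(\Bigl(\tfrac{d+1}{\delta}+r\Bigr)\sqrt{\tfrac{n}{m}} + \sqrt[4]{\tfrac{n}{m}}\right).
\]

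Next, I would estimate $\Pr(v \in F(G))$ for a single vertex $v$. Let $V_{i_0}^*$ be $v$'s $\pi^*$-part. The event $v \in F(G)$ is closely tied to the existence of an index $j \ne i_0$ satisfying $e(v, V_j^*) - e(v, V_{i_0}^*) \le d$: the forward implication is immediate (moving $v$ to $V_j^*$ produces a cut of deficit $\le d$ differing from $\pi^*$ at $v$), while the converse requires a Brightwell--Panagiotou--Steger-type stability argument showing that any cut of deficit $\le d$ agrees with $\pi^*$ on all but a negligible set of vertices, so a flexible $v$ must witness a small edge-count gap across two of the parts. Heuristically, in the random graph, each $e(v,V_j^*)$ behaves like a binomial with spread $\Theta(\sqrt{np/r})$, so the probability of a near-tie is $\lesssim r(d+1)/\sqrt{np/r}$ per vertex; summing this over $n$ choices of $v$ delivers the first term on the right-hand side of the required bound (note $n\sqrt{n/m}\sim\sqrt{n/p}$).

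The principal obstacle is that $\pi^*$ depends on $G$ itself, and in particular on the edges incident to $v$, so the heuristic anti-concentration cannot be applied directly. My plan is to deploy a switching argument: for each $v$, resample a few of its incident edges, accompanied by compensating edge-swaps elsewhere that preserve the total count $m$, and argue that with probability $1-O(\sqrt{n/m})$ such a local perturbation leaves $\pi^*$ structurally unchanged outside a negligible set of vertices, while rendering the conditional distribution of $(e(v,V_1^*),\dotsc,e(v,V_r^*))$ smooth enough for a local-limit-theorem-type estimate to apply. The factor $1/\delta$ would enter through the diminishing supply of non-edges available for switching as the density approaches $1$, while the residual $\sqrt[4]{n/m}$ term would arise from controlling the atypical event that $\pi^*$ itself is unstable under such local perturbations, handled via a supplementary second-moment computation. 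Making all of this quantitative---in particular, verifying that switching preserves the global structure of $\pi^*$ with the required probability and that the anti-concentration survives conditioning on the rest of the graph---is the most delicate step.
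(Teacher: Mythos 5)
Your reduction step is sound: the claim that any pair inside $\int(\pi^*)$ which fails to be $(d,r)$-equivalent must have an endpoint in $F(G)$ is correct, and together with the convexity bound on $\sum_i\binom{|V_i^*|}{2}$ it does reduce the theorem (via Markov) to the expectation bound $\Ex|F(G)|\lesssim n\bigl((\tfrac{d+1}{\delta}+r)\sqrt{n/m}+\sqrt[4]{n/m}\bigr)$. However, the proposal then diverges completely from the paper's method, and the part that carries all the difficulty is not proved.

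The paper does not estimate $\Pr(v\in F(G))$ at all and never goes near anti-concentration. Instead it runs a ``dynamics'' argument on the whole set of equivalent pairs: coupling $G_1\sim G(n,m)$ with $G_2=G_1\cup e\sim G(n,m+1)$ and using the elementary observation (\Cref{lemma:eq-d-non-edges}) that $e\notin G\cup\eq^r_{d+1}(G)$ forces $e\notin\eq^r_d(G\cup e)$, one gets $\Ex|\eq^r_{d+1}(G_1)\setminus G_1|\ge\frac{N-m}{m+1}\Ex|\eq^r_d(G_2)\cap G_2|$. After replacing intersections with cardinalities via the concentration in \Cref{cor:equiv_classes_edges_count}, this telescopes: starting from the trivial base $\Ex|\eq^r_{-1}(G)|\ge N/r-n/2$ and summing $d+1$ differences, each of size $O(n^{5/2}/(\delta\sqrt m))$, gives the lower bound on $\Ex|\eq^r_d(G)|$ directly. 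Comparing with the near-matching upper bound $|\eq^r_d(G)|\le N/r+O(\sqrt[4]{n/m}+r\sqrt{n/m})N$ coming from balance of low-deficit cuts yields the tail bound. No canonical cut, no switching, no conditioning on $\pi^*$.

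Your route has two concrete gaps. First, the ``converse'' implication you invoke --- that $v\in F(G)$ forces a near-tie among $e(v,V_1^*),\dotsc,e(v,V_r^*)$ --- only holds after restricting to cuts that already agree with $\pi^*$ outside a small set $S$, with the near-tie threshold degraded to $d+O(|S|)$ (because moving a vertex set $S$ changes the cut size by the single-vertex gaps plus $O(\binom{|S|}{2})$ correction terms). To control the complementary event you would need $\Pr(\exists\ \text{cut of deficit}\le d\ \text{moving}\ge s_0\ \text{vertices from}\ \pi^*)=O(\sqrt{n/m})$, a fine-grained stability statement that is essentially the theorem being proved (and well beyond what Brightwell--Panagiotou--Steger establish, which is an asymptotic $o(n)$ statement for $r=2$ at a weaker failure probability). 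Second, the switching/anti-concentration step is left entirely as a heuristic. The obstruction you flag --- that $\pi^*$ is a global functional of all of $G$, including the edges at $v$, so the vector $(e(v,V_1^*),\dotsc,e(v,V_r^*))$ has an intractable conditional law --- is precisely the obstacle the paper's coupling argument was designed to sidestep; resolving it ``quantitatively'' is not a routine verification but the crux of the problem. As written, the proposal rediscovers the hard part of the Brightwell--Panagiotou--Steger framework and leaves it open, rather than proving the theorem.
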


Let us now relate the property of being rigid to the property of admitting a core from the statement of \Cref{thm:main-rigidity-theorem}.
To this end, note first that $(d,r)$-equivalence is an equivalence relation on the vertex set of $G$; we shall call its equivalence classes the \emph{$(d,r)$-components} of~$G$.
In the case when the $r$ largest $(d,r)$-components $C_1, \dotsc, C_r$ of $G$ have strictly more than $n/(r+1)$ vertices each (and thus all remaining components have fewer than $n/(r+1)$ vertices), we call the set $\core_d^r(G) \coloneqq \{C_1,\dotsc,C_r\}$ the \emph{$(d,r)$-core} of $G$.
We will show that, under mild assumptions on the distribution of edges of $G$ (that $G(n,m)$ fails to have with probability much smaller than the upper bound in \Cref{thm:rigidity}), the fact that $G$ is $(d,r,\eps)$-rigid implies that it has a $(d,r)$-core whose each component has at least $n/r-r\eps n$ vertices.  In particular, letting $\Core_d^r(\alpha)$ denote the set of all graphs on $\br{n}$ that have a $(d,r)$-core whose each component has at least $n/r - \alpha n$ vertices, \Cref{thm:rigidity} will imply the following statement.

\begin{cor}
  \label{cor:core}
  There exists an absolute constant $C$ such that, for all $\alpha,\delta\in(0,1)$ and all nonnegative integers $d, m, n, r$ satisfying $r \ge 2$, $\alpha < 1/(r^2+r)$, and $1 \le m \le (1-\delta)\binom{n}{2}$,
  \[
    \Pr\big(G(n,m) \notin \Core_d^r(\alpha)\big) \le \frac{Cr^2}{\alpha} \cdot \left(\left(\frac{d+1}{\delta}+r\right) \cdot \sqrt{\frac{n}{m}} + \sqrt[4]{\frac{n}{m}}\right).
  \]
\end{cor}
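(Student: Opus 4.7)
The plan is to deduce the corollary from \Cref{thm:rigidity} by combining its rigidity guarantee with a supplementary near-balance property of small-deficit cuts, followed by a short combinatorial argument.  First, I would apply \Cref{thm:rigidity} with $\eps$ chosen to be a sufficiently small absolute-constant multiple of $\alpha/r$, so that the prefactor $Cr/\eps$ in \Cref{thm:rigidity} becomes $C'r^2/\alpha$ for some (larger) absolute constant $C'$.  This yields that, outside an event whose probability is bounded by the right-hand side of the corollary, $G(n,m)$ is $(d,r,\eps)$-rigid; equivalently, the sizes $c_1 \ge c_2 \ge \dotsb$ of its $(d,r)$-components satisfy $\sum_i c_i^2 \ge (1-\eps) n^2/r - O(n)$.

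Next, I would establish a supplementary ``balance'' lemma whose failure probability is dominated by the bound in \Cref{thm:rigidity}: whp every $r$-cut of $G(n,m)$ with deficit at most $d$ has all parts of size at most $n/r + \beta n$, for a suitable $\beta = \Theta(\alpha/r)$ (with the implicit constant fixed so that the combinatorial step below goes through).  The proof is a standard vertex-switching argument: if some part $V_i$ were larger than $n/r + \beta n$, averaging over vertices would provide a smaller part $V_j$ such that the expected change in the cut size by moving a vertex $v \in V_i$ to $V_j$ is of order at least $\beta n p$, where $p = 2m/(n(n-1))$; a Chernoff- or hypergeometric-type concentration estimate, combined with a union bound over partitions, shows that whp some such move improves the cut by strictly more than $d$, contradicting the deficit bound.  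Since every $(d,r)$-component is entirely contained in a single part of every deficit-$\le d$ cut, the balance lemma forces $c_i \le n/r + \beta n$ for every $i$.

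The last step is a purely deterministic computation.  Setting $\beta' \coloneqq n/r + \beta n$ and assuming, toward a contradiction, that $c_r < n/r - \alpha n$, one uses $c_i \le \beta'$ for $i < r$ and $c_i \le c_r$ for $i > r$ to conclude
\[
    \sum_i c_i^2 \le (r-1)(\beta')^2 + c_r\bigl(n - (r-1)\beta'\bigr).
\]
Substituting $\beta' = n/r + \beta n$ and $c_r < n/r - \alpha n$ and tracking the terms linear in $\alpha,\beta,\eps$, one verifies that, provided $\eps + (r-1)\beta < \alpha$ (which is ensured by our choice of constants), the right-hand side is strictly less than $(1-\eps) n^2/r - O(n)$, contradicting rigidity.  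Hence $c_r \ge n/r - \alpha n$; since $\alpha < 1/(r(r+1))$, this strictly exceeds $n/(r+1)$, so the $r$ largest $(d,r)$-components form the required $(d,r)$-core.

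The main obstacle I foresee is the balance lemma in the second step.  The vertex-switching argument must be tight enough to beat the deficit $d$ (not merely zero), and its concentration bounds together with the union bound over the $O(r^n)$ partitions must yield a failure probability strictly dominated by the $\sqrt{n/m}$-type term of \Cref{thm:rigidity}, uniformly across the permitted range $1 \le m \le (1-\delta)\binom{n}{2}$.  A careful choice of concentration inequality, combined with a clean accounting of the dependence on $d$, $\delta$, and $r$, will be the most delicate part of the argument.
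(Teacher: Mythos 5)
Your high-level strategy is exactly the paper's: apply \Cref{thm:rigidity} with $\eps=\Theta(\alpha/r)$, establish a near-balance property of all $r$-cuts of small deficit, and then close with a deterministic convexity computation showing that rigidity plus near-balance forces the $r$ largest $(d,r)$-components to have size at least $n/r-\alpha n$ (whence they exceed $n/(r+1)$, by the assumption $\alpha<1/(r^2+r)$). The deterministic step you outline is correct and is essentially \Cref{lemma:core}. The one genuine divergence, and the point you yourself flag as the obstacle, is the proof of the near-balance lemma. You propose a vertex-switching argument, which requires a single vertex move to improve the cut by strictly more than $d$; this introduces an extra constraint of the form $\beta np \gtrsim d$ and makes the concentration-plus-union-bound bookkeeping delicate, especially across the full range $1\le m\le (1-\delta)\binom{n}{2}$. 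The paper avoids switching entirely: \Cref{lemma:max_cut_balanced} combines the universal inequality $b_r(G)\ge\frac{r-1}{r}m$, a uniform lower-tail bound for $|\int(\Pi)\cap G|$ over all partitions $\Pi$ (\Cref{cor:equiv_classes_edges_count}), and the deterministic convexity bound on $|\ext(\Pi)|$ for unbalanced cuts (\Cref{fact:Pi-unbalanced}). This yields a near-balance statement at a scale $4\max\{(d/m)^{1/2},(n/m)^{1/4}\}$ depending only on $d,m,n$, with failure probability $e^{-n}$, and then the reduction ``assume WLOG that $d\le\sqrt{m/n}$ and $4r^2/\alpha<(m/n)^{1/4}$, otherwise the stated bound exceeds $1$'' shows that this scale is at most $\alpha/r$. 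You would need to carry out an analogous WLOG reduction to make the vertex-switching route go through; the edge-counting route sidesteps the issue and is shorter.
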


\subsection*{Organisation}

The remainder of this section is organised as follows.
First, in \Cref{sec:dynamics-equiv-pairs}, we establish two crucial properties of the dynamics of the sets of $(d,r)$-equivalent pairs of vertices with respect to addition/deletion of edges.
Next, in \Cref{sec:distr-edges-Gnm}, we derive several (standard) estimates regarding the concentration of the number of edges in various induced subgraphs of $\Gnm$ and sizes of the parts in cuts with small deficit.
In \Cref{sc:proof_rigidity}, we use the results established in the previous two subsections to prove~\Cref{thm:rigidity} and in \Cref{sec:derivation-cor-core}, we derive \Cref{cor:core}.
Finally, in \Cref{sec:proof-main-rigidity-thm} we prove \Cref{thm:main-rigidity-theorem}.

\subsection*{Notation}

Given integers $d \ge 0$ and $r \ge 2$, the set of $(d,r)$-equivalent pairs in a graph~$G$ will be henceforth denoted by $\eq_d^r(G)$.  For brevity, we will identify a graph with its set of edges; in particular, we will write $|G|$ to denote the number of edges in a graph~$G$.
Following DeMarco and Kahn~\cite{DeMKah15Tur}, we will use the following notational conventions:
First, given a collection $\Pi$ of pairwise-disjoint sets of vertices in a graph (e.g., a cut), we will denote the set of pairs of vertices contained in a single set of $\Pi$ by $\int(\Pi)$ and the set of remaining pairs of vertices of $\bigcup \Pi$ by $\ext(\Pi)$; this way
\[
  b_r(G) = \max\{|\ext(\Pi) \cap G| : \text{$\Pi$ is an $r$-cut}\}.
\]
Second, given an integer $r \ge 2$, we will write $\crit_r(G)$ to denote the set of edges of $G$ that cross all maximum $r$-cuts (the \emph{$r$-critical} edges of $G$).
Finally, given two families $\cC$ and $\cC'$ of pairwise-disjoint sets of vertices, we will write $\cC \preceq \cC'$ if each element of $\cC$ is contained in some element of $\cC'$.  In other words,
\[
  \cC \preceq \cC' \quad \Longleftrightarrow \quad \forall X \in \cC \; \exists X' \in \cC' \; X \subseteq X'
  \quad \Longleftrightarrow \quad \int(\cC) \subseteq \int(\cC').
\]

\subsection{Dynamics of equivalent pairs}
\label{sec:dynamics-equiv-pairs}

Suppose that $r \ge 2$ and that $G$ is an arbitrary graph on $\br{n}$.
The proof of \Cref{thm:rigidity} will exploit the following crucial property of the dynamics of the sequence $(\eq_d^r(G))_d$ with respect to addition of edges.

\begin{lemma}
  \label{lemma:eq-d-non-edges}
  The following holds for all integers $r \ge 2$ and $d \ge 0$ and every $e \in K_n$:
  If $e \notin G \cup \eq^r_{d+1}(G)$, then $e \notin \eq^r_d(G \cup e)$.
\end{lemma}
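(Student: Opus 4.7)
The plan is a short case analysis on how $b_r$ changes when we add the single edge $e = \{u,v\}$. Since any $r$-cut of $G$ is also an $r$-cut of $G \cup e$, and its size either stays the same or increases by $1$ depending on whether $e$ crosses it, we have $b_r(G) \le b_r(G \cup e) \le b_r(G) + 1$. The two cases to treat are therefore $b_r(G \cup e) = b_r(G)$ and $b_r(G \cup e) = b_r(G) + 1$.

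The hypothesis $e \notin \eq^r_{d+1}(G)$ produces a witness $r$-cut $\Pi$ of $G$ with deficit at most $d+1$ that places $u$ and $v$ in different parts. Viewing $\Pi$ as an $r$-cut of $G \cup e$ adds the single edge $e$ to its crossing set (since $e \notin G$), so
\[
  |\ext(\Pi) \cap (G \cup e)| = |\ext(\Pi) \cap G| + 1 \ge b_r(G) - d.
\]
In the first case $b_r(G \cup e) = b_r(G)$, this inequality says that $\Pi$, regarded as a cut of $G \cup e$, has deficit at most $d$ and separates $e$, so $e \notin \eq^r_d(G \cup e)$, as required.

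In the second case $b_r(G \cup e) = b_r(G) + 1$, I would not use $\Pi$ at all. Instead, I would take any maximum $r$-cut $\Pi^*$ of $G \cup e$ and show it must separate $u$ and $v$: otherwise the crossing-edge count of $\Pi^*$ in $G$ and in $G \cup e$ would agree, giving $b_r(G) \ge b_r(G \cup e) = b_r(G)+1$, a contradiction. Hence $\Pi^*$ is a cut of $G \cup e$ of deficit $0 \le d$ that separates $e$, again giving $e \notin \eq^r_d(G \cup e)$.

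I do not anticipate any real obstacle; the argument is essentially bookkeeping. The only point that needs a moment's thought is the observation that when $\Pi$ realises deficit exactly $d+1$ in $G$ and $b_r$ simultaneously jumps by $1$ upon adding $e$, the two shifts fail to cancel and $\Pi$ ceases to suffice as a witness in $G \cup e$; this is precisely the scenario rescued by switching to the maximum-cut witness $\Pi^*$ in the second case.
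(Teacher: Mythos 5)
Your proof is correct and follows essentially the same two-case analysis as the paper, keyed to whether $b_r(G \cup e) = b_r(G)$ or $b_r(G \cup e) = b_r(G)+1$. The paper pivots on a minimum-deficit cut $\Pi$ crossing $e$ and cases on whether that minimum deficit is zero (which is equivalent to $b_r$ jumping, so the two case divisions coincide); your first case matches the paper's second, and vice versa, with identical bookkeeping.
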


Before we prove the lemma, it will be useful to observe the following alternative definitions of the set of $(0,r)$-equivalent pairs. 

\begin{fact}
  \label{fact:eq-analogous-defs}
  The following statements are equivalent for all $e \in K_n \setminus G$:
  \begin{itemize}
  \item
    $e \notin \eq^r_0(G)$;
  \item
    $e \in \ext(\Pi)$ for some maximum $r$-cut $\Pi$ of $G$;
  \item
    $e \in \ext(\Pi)$ for all maximum $r$-cuts $\Pi$ of $G \cup e$  (that is, $e \in \crit_r(G \cup e)$)
  \item
    $b_r(G \cup e) = b_r(G) + 1$.
  \end{itemize}
\end{fact}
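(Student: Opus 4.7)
The plan is to prove the four statements equivalent by following the cycle $(1)\Leftrightarrow(2)\Rightarrow(4)\Rightarrow(3)\Rightarrow(2)$. The equivalence $(1)\Leftrightarrow(2)$ is just unwinding definitions: an $r$-cut has deficit $0$ precisely when it is a maximum $r$-cut, and saying that the endpoints of $e$ fail to be $(0,r)$-equivalent in $G$ is the same as saying that some maximum $r$-cut $\Pi$ of $G$ places them in different parts, i.e., $e\in\ext(\Pi)$. Throughout the other implications, I would rely on the trivial a priori bound
\[
b_r(G)\le b_r(G\cup e)\le b_r(G)+1,
\]
which is immediate from the identity $|\ext(\Pi)\cap (G\cup e)|=|\ext(\Pi)\cap G|+\1_{e\in\ext(\Pi)}$, valid for every $r$-cut $\Pi$ of $\br{n}$.

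Given this, $(2)\Rightarrow(4)$ is immediate: a maximum $r$-cut $\Pi$ of $G$ with $e\in\ext(\Pi)$, viewed as a cut of $G\cup e$, has size $b_r(G)+1$, which combined with the upper bound yields $b_r(G\cup e)=b_r(G)+1$. For $(4)\Rightarrow(3)$, let $\Pi'$ be any maximum $r$-cut of $G\cup e$; if we had $e\in\int(\Pi')$, then $|\ext(\Pi')\cap G|=b_r(G\cup e)=b_r(G)+1$, contradicting the definition of $b_r(G)$. Hence $e\in\ext(\Pi')$ for every maximum $r$-cut $\Pi'$ of $G\cup e$, which is exactly (3).

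The one implication that warrants a moment's care is $(3)\Rightarrow(2)$. The same argument as above, applied to any maximum $r$-cut $\Pi'$ of $G\cup e$, still gives $b_r(G\cup e)\le b_r(G)+1$; the task is to rule out $b_r(G\cup e)=b_r(G)$. Suppose for contradiction that this equality held, and pick any maximum $r$-cut $\Pi$ of $G$. As a cut of $G\cup e$ its size is either $b_r(G)$ or $b_r(G)+1$, and the second option is incompatible with $b_r(G\cup e)=b_r(G)$; hence $e\in\int(\Pi)$, making $\Pi$ a maximum $r$-cut of $G\cup e$ with $e\in\int(\Pi)$, contradicting (3). Therefore $b_r(G\cup e)=b_r(G)+1$, and any maximum $r$-cut $\Pi'$ of $G\cup e$, which by (3) satisfies $e\in\ext(\Pi')$, yields $|\ext(\Pi')\cap G|=b_r(G)$, so $\Pi'$ is a maximum $r$-cut of $G$ witnessing (2).

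I do not foresee a genuine obstacle: the entire statement is a piece of bookkeeping around the fact that adding a single edge can shift $b_r$ by at most one, and every part of the argument reduces to inspecting how $|\ext(\Pi)\cap G|$ and $|\ext(\Pi)\cap(G\cup e)|$ differ. The one place where a brief contradiction is required is in showing $b_r(G\cup e)>b_r(G)$ in the $(3)\Rightarrow(2)$ step, and even there the contradiction is produced by a single max $r$-cut of $G$.
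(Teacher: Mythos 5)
Your proof is correct. The paper states this as a \emph{Fact} without supplying a proof, treating it as elementary bookkeeping, so there is no proof in the paper to compare against; your cycle $(1)\Leftrightarrow(2)\Rightarrow(4)\Rightarrow(3)\Rightarrow(2)$, anchored on the identity $|\ext(\Pi)\cap(G\cup e)|=|\ext(\Pi)\cap G|+\1_{e\in\ext(\Pi)}$ and the resulting bound $b_r(G)\le b_r(G\cup e)\le b_r(G)+1$, is exactly the kind of routine verification the authors are implicitly relying on, and every step checks out.
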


\begin{proof}[Proof of~\Cref{lemma:eq-d-non-edges}]
  Suppose that $e \notin G \cup \eq^r_{d+1}(G)$.  Let $\Pi$ be an $r$-cut in $G$ such that $e \in \ext(\Pi)$ and whose deficit $d_e$ in $G$ is the smallest possible.  Note that $d_e\le d+1$ as we assumed that $e$ crosses some cut with deficit at most $d+1$.  If $d_e = 0$, then $\Pi$ is a maximum $r$-cut of $G \cup e$, so in particular $e \notin \eq^r_d(G \cup e)$, due to~\Cref{fact:eq-analogous-defs}.  Otherwise, $b_r(G \cup e) = b_r(G)$ and, consequently, the deficit of $\Pi$ in $G\cup e$ equals
  \[
    b_r(G \cup e) - \left|\ext(\Pi) \cap (G \cup e)\right| = b_r(G) - \left|\ext(\Pi) \cap G\right| - 1 = d_e - 1 \le d.
  \]
  In particular, $\Pi$ is a cut witnessing that $e \notin \eq^r_d(G \cup e)$.
\end{proof}

Both of our applications of \Cref{cor:core} will crucially use the fact that cores are `stable' under small edge perturbations.  Our next lemma, and its corollaries, formalise this notion of stability.

\begin{lemma}
  \label{lemma:eq-d-nested}
  For every integer $d \ge 0$, graph $G \subseteq K_n$, and edge $e \in K_n$, we have
  \[
    \eq_{d+1} (G) \subseteq \eq_d(G \triangle e).
  \]
\end{lemma}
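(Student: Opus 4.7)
The plan is to prove the inclusion by contraposition: I will show that if a pair $\{u,v\}$ fails to be $(d,r)$-equivalent in $G \triangle e$, then it fails to be $(d+1,r)$-equivalent in $G$. This reduces everything to a single arithmetic fact: for every $r$-cut $\Pi$ of the vertex set, the deficit of $\Pi$ in $G$ can exceed the deficit of $\Pi$ in $G \triangle e$ by at most one. Once this one-step comparison is in place, the conclusion is immediate, since any cut $\Pi$ of $G \triangle e$ with deficit at most $d$ that separates $u$ and $v$ will, viewed as a cut of $G$, separate the same pair and have deficit at most $d+1$, so $\{u,v\} \notin \eq_{d+1}(G)$.

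To prove the one-step inequality, I would write the deficit of an arbitrary $r$-cut $\Pi$ in a graph $H$ as $b_r(H) - |\ext(\Pi) \cap H|$, so that the difference of the deficits in $G$ and in $G \triangle e$ equals
\[
  \bigl(b_r(G) - b_r(G \triangle e)\bigr) + \bigl(|\ext(\Pi) \cap (G \triangle e)| - |\ext(\Pi) \cap G|\bigr).
\]
The two summands are then easy to control. When $e \notin G$, the first lies in $\{-1, 0\}$ (since adding a single edge increases $b_r$ by at most one) and the second equals $+\1[e \in \ext(\Pi)]$; when $e \in G$, the first lies in $\{0, 1\}$ and the second equals $-\1[e \in \ext(\Pi)]$. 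A quick case check shows that in both cases the sum belongs to $\{-1, 0, 1\}$, which yields exactly the desired bound.

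I do not anticipate any serious obstacle: unlike \Cref{lemma:eq-d-non-edges}, this statement requires no extremal choice of cut and no algebraic manipulation beyond tracking how $b_r(\cdot)$ and a single indicator $\1[e \in \ext(\Pi)]$ move under a one-edge toggle. The only mild care-point is to keep the subcases $e \in G$ and $e \notin G$ organised, and this is handled cleanly by the symmetric formulation in terms of $G \triangle e$ above.
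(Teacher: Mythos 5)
Your proof is correct and takes essentially the same route as the paper: both arguments reduce the claim to the observation that, for a fixed $r$-cut $\Pi$, the deficit $H \mapsto b_r(H) - |H \cap \ext(\Pi)|$ changes by at most one under a single edge toggle, and then apply this to the witnessing cut for $\eq_d(G \triangle e)$. The only (cosmetic) difference is how the one-step bound is verified: the paper notes it is a difference of two nondecreasing $1$-Lipschitz functions, whereas you check the cases $e \in G$ and $e \notin G$ directly.
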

\begin{proof}
  We prove the equivalent statement $K_n \setminus \eq_d(G \triangle e) \subseteq K_n \setminus \eq_{d+1}(G)$.
  Observe first that, for every fixed $r$-cut $\Pi$, the function\footnote{We write $\cP(K_n)$ to denote the family of all subgraphs of $K_n$, i.e., the powerset of $K_n$, which we identify here with its set of edges.}
  \[
    \cP(K_n) \ni H \mapsto \deficit_H(\Pi) = b_r(H) - \big|H \cap \ext(\Pi)\big|
  \]
  is the difference of two nondecreasing, $1$-Lipshitz functions, and thus it is also $1$-Lipshitz.
  In particular, for every graph $G$ and edge $e \in K_n$, we have $\deficit_G(\Pi) \le \deficit_{G \triangle e}(\Pi) + 1$.
  Now, suppose that $f \in K_n \setminus \eq_d(G \triangle e)$, that is, $f \in \ext(\Pi)$ for some $\Pi$ with $\deficit_{G \triangle e}(\Pi) \le d$.  Since $\deficit_G(\Pi) \le \deficit_{G \triangle e}(\Pi) + 1 \le d+1$, we have $f \notin \eq_{d+1}(G)$.
\end{proof}

\begin{cor}
  \label{cor:eq-d-nested}
  For every integer $d \ge 0$ and graphs $G, T \subseteq K_n$, we have
  \[
    \eq_{d+e_T} (G) \subseteq \eq_d(G \triangle T).
  \]
\end{cor}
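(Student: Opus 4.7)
The corollary is a direct iteration of \Cref{lemma:eq-d-nested}, which is precisely the special case $e_T = 1$ (note that for a single edge $e$, we have $G \triangle e = G \triangle \{e\}$ and the statement becomes $\eq_{d+1}(G) \subseteq \eq_d(G \triangle e)$). The natural approach is thus induction on $e_T$.

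The base case $e_T = 0$ is trivial, since then $T$ is the empty graph, $G \triangle T = G$, and the containment reduces to $\eq_d(G) \subseteq \eq_d(G)$. For the inductive step, fix a graph $T$ with $e_T \ge 1$, pick an arbitrary edge $e \in T$, and write $T' \coloneqq T \setminus \{e\}$, so that $e_{T'} = e_T - 1$ and $T = T' \triangle \{e\}$. The inductive hypothesis applied to $G$, $T'$, and the parameter $d+1$ yields
\[
  \eq_{(d+1) + e_{T'}}(G) \;\subseteq\; \eq_{d+1}(G \triangle T'),
\]
and \Cref{lemma:eq-d-nested} applied to the graph $G \triangle T'$ and the edge $e$ gives
\[
  \eq_{d+1}(G \triangle T') \;\subseteq\; \eq_d\big((G \triangle T') \triangle e\big) = \eq_d(G \triangle T).
\]
Chaining the two inclusions and using $(d+1) + e_{T'} = d + e_T$ completes the induction.

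No serious obstacle arises, as the proof is a routine iteration; one just has to be mindful that $e$ may belong to $G$ or not (so $G \triangle T$ must be interpreted as symmetric difference throughout, as in \Cref{lemma:eq-d-nested}), and that the deficit index is incremented by exactly one at each removal of an edge from $T$, matching the accounting $d + e_T \to d + e_{T'} + 1$.
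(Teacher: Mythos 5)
Your proof is correct and is exactly the intended argument: the paper gives no separate proof for this corollary, treating it as an immediate iteration of \Cref{lemma:eq-d-nested}, which is precisely the induction on $e_T$ you carry out. The bookkeeping $(d+1)+e_{T'}=d+e_T$ and the identification $(G\triangle T')\triangle e = G\triangle T$ are both handled correctly.
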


\begin{cor}
  \label{cor:core-d-nested}
  The following holds for every integer $d \ge 0$ and all graphs $G, T \subseteq K_n$.
  If $G$ has a $(d+e_T, r)$-core, then $G \triangle T$ has a $(d, r)$-core and $\core_{d+e_T}^r(G) \preceq \core_d^r(G \triangle T)$.
\end{cor}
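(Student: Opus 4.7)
The plan is to build on \Cref{cor:eq-d-nested}, which gives $\eq_{d+e_T}^r(G) \subseteq \eq_d^r(G \triangle T)$. This inclusion means that the partition of $\br{n}$ into $(d,r)$-components of $G \triangle T$ is a coarsening of the partition into $(d+e_T,r)$-components of $G$: each $(d+e_T,r)$-component of $G$ sits inside a unique $(d,r)$-component of $G \triangle T$. In particular, writing $\{C_1, \ldots, C_r\}$ for the $(d+e_T,r)$-core of $G$, each $C_i$ is contained in some $(d,r)$-component $D_i$ of $G \triangle T$, and $|D_i| \ge |C_i| > n/(r+1)$.

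Once one verifies that the $D_i$'s are pairwise distinct, the remainder is a straightforward size count: since $|V \setminus \bigcup_i C_i| < n/(r+1)$, every $(d,r)$-component of $G \triangle T$ disjoint from $\bigcup_i C_i$ is a union of $(d+e_T,r)$-components of $G$ outside its core and so has fewer than $n/(r+1)$ vertices. Consequently, $\{D_1, \ldots, D_r\}$ are precisely the $r$ largest $(d,r)$-components of $G \triangle T$, each exceeds $n/(r+1)$ in size, and together they form the $(d,r)$-core of $G \triangle T$. The refinement $\core_{d+e_T}^r(G) \preceq \core_d^r(G \triangle T)$ is then immediate from the inclusions $C_i \subseteq D_i$.

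The main obstacle is to verify the pairwise distinctness of $D_1, \ldots, D_r$. I would attack this by induction on $e_T$, reducing to the single-edge case $T = \{e\}$; the inductive step is transparent because \Cref{cor:eq-d-nested} already passes the refinement statement one edge at a time. In the single-edge case, \Cref{lemma:eq-d-non-edges} supplies the crucial input: for any pair $(u,v)$ with $u \in C_i$, $v \in C_j$ and $i \ne j$ that happens to coincide with $e$, the lemma forces $\{u,v\} \notin \eq_d^r(G \triangle e)$ and hence $D_i \ne D_j$. For edges $e$ lying inside a single $(d+1,r)$-component of $G$, one has to argue that toggling $e$ cannot cause two core components of $G$ to merge in the $(d,r)$-partition of $G \triangle e$; I expect this to follow from the $1$-Lipschitz property of $H \mapsto \deficit_H(\Pi)$ (established inside the proof of \Cref{lemma:eq-d-nested}) combined with a local exchange argument on cuts of near-minimal deficit, exploiting the size bound $|C_i| > n/(r+1)$ to preserve a separating cut of deficit at most $d$ in $G \triangle e$. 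Once this delicate verification is in place, every other step reduces to bookkeeping with the sizes guaranteed by the definition of a core.
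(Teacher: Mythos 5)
Your first two paragraphs are correct and match the natural route: Corollary~\ref{cor:eq-d-nested} puts each core component $C_i$ of $G$ inside some $(d,r)$-component $D_i$ of $G\triangle T$, and once the $D_i$ are pairwise distinct the size counts (using $|D_i|\ge |C_i|>n/(r+1)$ and that every $(d,r)$-component of $G\triangle T$ disjoint from $\bigcup_i C_i$ has fewer than $n/(r+1)$ vertices) force $\{D_1,\dotsc,D_r\}=\core_d^r(G\triangle T)$ and the refinement relation.

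The gap is in the distinctness step, and it is a genuine one. First, in your single-edge case, Lemma~\ref{lemma:eq-d-non-edges} only rules out the \emph{one} merger $D_i=D_j$ for the pair $\{i,j\}$ such that $e$ joins $C_i$ to $C_j$, and it does so only when $e\notin G$ (it says nothing about edge removal). It gives no information about the other $\binom{r}{2}-1$ pairs, nor about edges between a $C_i$ and a non-core component, nor about edges between two non-core components. Second, the sub-case you flag as the ``main obstacle'' --- $e$ lying inside a single $(d{+}1,r)$-component --- is not actually proved: you say you \emph{expect} a local exchange argument using the $1$-Lipschitz property and the bound $|C_i|>n/(r+1)$ to work, but you do not exhibit such a cut, and it is not clear the approach closes. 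A sketch that explicitly defers the hard case is not a proof.

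What the case analysis by the location of $e$ obscures is that distinctness should be attacked directly, with no cases. Every $r$-cut $\Pi$ of $G\triangle T$ with $\deficit_{G\triangle T}(\Pi)\le d$ satisfies $\deficit_G(\Pi)\le d+e_T$ (the $1$-Lipschitz bound from the proof of Lemma~\ref{lemma:eq-d-nested}, applied $e_T$ times). The missing ingredient is then a single statement about $G$ alone: the $r$ components of a $(d{+}e_T,r)$-core of $G$, being pairwise disjoint and each of size exceeding $n/(r+1)$, must occupy $r$ \emph{distinct} parts of every $r$-cut of $G$ with deficit at most $d+e_T$. Given that, every $r$-cut of $G\triangle T$ of deficit at most $d$ separates each pair $C_i,C_j$, so $D_i\ne D_j$ immediately. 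This separation property of a core is precisely what you need to establish, and your proposal neither states it nor proves it; the exchange-argument sketch does not obviously supply it.
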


\subsection{Distribution of edges in $\Gnm$}
\label{sec:distr-edges-Gnm}

Our arguments require various estimates of the number of edges in certain subgraphs of $\Gnm$.  Luckily, all these estimates can be easily deduced from the following lower-tail estimate for the number of edges in $\Gnm$ induced by subsets of its vertices.

\begin{lemma}
  \label{lemma:Chernoff_subsets}
  Suppose that $G \sim \Gnm$ for some $m \in \{0, \dotsc, \binom{n}{2}\}$. Then, letting $p = m/\binom{n}{2}$,
  \[
    \Pr\left(\exists U\,\,|G[U]| < \binom{|U|}{2}p-2|U|\sqrt{np}\right) \le e^{-n}.
  \]
\end{lemma}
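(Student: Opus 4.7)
The plan is to establish the estimate for every fixed $U \subseteq \br{n}$ via concentration and then apply a union bound over the at most $2^n$ choices of $U$. For a fixed $U$ of size $u$, the random variable $|G[U]|$ counts how many of the $m$ uniformly chosen edges of $\Gnm$ fall among the $\binom{u}{2}$ pairs contained in $U$, hence $|G[U]|$ is hypergeometric with mean $\mu_u \coloneqq \binom{u}{2}p$.

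The main tool I would invoke is the standard Chernoff-type lower-tail bound, which (by Hoeffding's classical reduction of sampling without replacement to sampling with replacement) is valid for the hypergeometric distribution just as for the binomial:
\[
  \Pr\bigl(|G[U]| \le \mu_u - t\bigr) \le \exp\!\left(-\frac{t^2}{2\mu_u}\right) \quad \text{for every } t > 0.
\]
Setting $t = 2u\sqrt{np}$ yields
\[
  \frac{t^2}{2\mu_u} = \frac{4u^2 np}{u(u-1)p} = \frac{4un}{u-1} \ge 4n \quad \text{for every } u \ge 2,
\]
so the probability of the bad event for any fixed $U$ with $|U| \ge 2$ is at most $e^{-4n}$. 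For $u \in \{0, 1\}$ the event is vacuous, since $|G[U]| \ge 0$ while $\binom{u}{2}p - 2u\sqrt{np} \le 0$.

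Finally, I would take a union bound over the $\sum_u \binom{n}{u} = 2^n$ choices of $U$, yielding a total failure probability of at most $2^n \cdot e^{-4n} = e^{-(4 - \ln 2)n}$, which is comfortably below $e^{-n}$. I expect no real obstacle here: the entire argument is a routine concentration-plus-union-bound estimate in which the $e^{-4n}$ Chernoff exponent has been engineered to beat the $2^n$ enumeration factor, with the slack of the bound being generous enough that even a considerably weaker deviation inequality would suffice. The only mildly nontrivial ingredient is justifying that the Chernoff bound applies to a hypergeometric variable, but this is standard.
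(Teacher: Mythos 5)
Your proof is correct and follows essentially the same route as the paper: a Chernoff-type lower-tail bound for the hypergeometric distribution (via Hoeffding's comparison with the binomial) gives $e^{-4n}$ per fixed $U$, and a union bound over the at most $2^n$ subsets yields the claim. The only cosmetic difference is that you spell out the exponent computation $4un/(u-1)\ge 4n$ and dispose of $|U|\le 1$ explicitly, whereas the paper simply restricts to nonempty $U$.
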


\begin{proof}
  We may clearly assume that $m \ge 1$.
  Fix a nonempty $U\subseteq\br{n}$.  Standard estimates on lower tail probabilities of the binomial distribution yield\footnote{See \cite[Section~6]{Hoe63}, which argues that the hypergeometric distribution is at least as concentrated as the binomial distribution with the same parameters.}
  \[
    \Pr\left(|G[U]| < \binom{|U|}{2}p-2|U|\sqrt{np}\right)\leq\exp\left(-\frac{4|U|^2np}{p|U|^2}\right)=e^{-4n}.
  \]
  The union bound over all $U$ finishes the proof.
\end{proof}

In fact, we will only use the following immediate corollary of \cref{lemma:Chernoff_subsets}.

\begin{cor}
  \label{cor:equiv_classes_edges_count}
  Suppose that $G \sim \Gnm$ for some $m \in \{0, \dotsc, \binom{n}{2}\}$.
  With probability at least $1 - e^{-n}$, for every partition $\Pi$ of $\br{n}$,
  \[
    |\int(\Pi) \cap G| \ge |\int(\Pi)| \cdot p - 2n\sqrt{np}.
  \]
\end{cor}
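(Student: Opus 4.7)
The plan is to derive this corollary as a direct, one-line consequence of \Cref{lemma:Chernoff_subsets} by summing the per-set lower bound over the parts of the partition, using the fact that the set of internal pairs decomposes disjointly over parts and that part sizes sum to $n$.

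Concretely, I would condition on the (high-probability) event $\cE$ that for every $U \subseteq \br{n}$ we have $|G[U]| \ge \binom{|U|}{2} p - 2|U| \sqrt{np}$; by \Cref{lemma:Chernoff_subsets}, $\Pr(\cE) \ge 1 - e^{-n}$. Now fix a partition $\Pi = \{V_1, \dotsc, V_k\}$ of $\br{n}$. Since the internal pair set decomposes as the disjoint union $\int(\Pi) = \bigsqcup_{i=1}^k \binom{V_i}{2}$, the number of edges of $G$ lying inside the parts satisfies
\[
|\int(\Pi) \cap G| \;=\; \sum_{i=1}^k |G[V_i]|.
\]
Applying the bound from $\cE$ termwise and then summing, I would obtain
\[
\sum_{i=1}^k |G[V_i]| \;\ge\; \sum_{i=1}^k \binom{|V_i|}{2} p \;-\; 2 \sqrt{np} \sum_{i=1}^k |V_i| \;=\; |\int(\Pi)| \cdot p \;-\; 2n \sqrt{np},
\]
where I used $\sum_i \binom{|V_i|}{2} = |\int(\Pi)|$ (as the sets $\binom{V_i}{2}$ are pairwise disjoint) and $\sum_i |V_i| = n$. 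Since this holds for every partition $\Pi$ simultaneously on the event $\cE$, the conclusion follows.

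There is no genuine obstacle here: the whole content is already packaged by \Cref{lemma:Chernoff_subsets}, whose quantifier over all $U$ is precisely what lets us avoid a union bound over partitions (which would be far too costly). The only thing to notice is that the linear-in-$|U|$ error term is what makes the summation of deviations collapse to $2n\sqrt{np}$ independently of the number or shape of the parts.
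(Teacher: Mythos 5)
Your proof is correct and matches the paper's intent exactly: the paper labels this an ``immediate corollary'' of \Cref{lemma:Chernoff_subsets}, and your decomposition of $\int(\Pi)$ into disjoint $\binom{V_i}{2}$'s, termwise application of the lemma's uniform-over-$U$ bound, and summation using $\sum_i |V_i| = n$ is precisely that derivation.
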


One important consequence of \Cref{cor:equiv_classes_edges_count} is that every cut of $\Gnm$ with small deficit must be balanced.

\begin{lemma}
  \label{lemma:max_cut_balanced}
  Suppose that $G \sim \Gnm$ for some $m \in \{0, \dotsc, \binom{n}{2}\}$.
  With probability at least $1-e^{-n}$, each part of every $r$-cut of $G$ with deficit at most $d$ has at most $n/r+4\max\{(d/m)^{1/2}, (n/m)^{1/4}\} \cdot n$ vertices.
\end{lemma}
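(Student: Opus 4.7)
The plan is to combine \Cref{cor:equiv_classes_edges_count}, which uniformly lower-bounds $|\int(\Pi) \cap G|$ over all partitions $\Pi$, with a separate upper-tail hypergeometric estimate applied to a single fixed balanced partition in order to also obtain a matching lower bound on $b_r(G)$. Once both bounds are in place, a short convexity computation translates the resulting upper bound on $|\int(\Pi)|$ into the stated bound on the size of the largest part of $\Pi$.

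Concretely, write $p = m/\binom{n}{2}$ and fix a balanced reference partition $\Pi_0$ of $\br{n}$ into $r$ parts of sizes $\lfloor n/r\rfloor$ or $\lceil n/r\rceil$, so that $|\int(\Pi_0)| = n(n-r)/(2r) + O(n)$. Since $|\int(\Pi_0) \cap G|$ is hypergeometric with mean $|\int(\Pi_0)|p$ and sample size $m$, Hoeffding's inequality yields an event $\cE_2$ of probability at least $1 - e^{-2n}$ on which $|\int(\Pi_0) \cap G| \le |\int(\Pi_0)|p + n\sqrt{np}$. Intersecting $\cE_2$ with the event $\cE_1$ of \Cref{cor:equiv_classes_edges_count} (whose complement has probability at most $e^{-3n}$ after the union bound in the proof of \Cref{lemma:Chernoff_subsets}) still gives probability at least $1 - e^{-n}$. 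On $\cE_2$, the cut $\Pi_0$ itself witnesses $b_r(G) \ge m - |\int(\Pi_0)|p - n\sqrt{np}$; for any $r$-cut $\Pi$ of deficit at most $d$ we have $|\int(\Pi) \cap G| \le m - b_r(G) + d$, while $\cE_1$ gives $|\int(\Pi) \cap G| \ge |\int(\Pi)|p - 2n\sqrt{np}$. Chaining these three inequalities yields
\[
  \bigl(|\int(\Pi)| - |\int(\Pi_0)|\bigr)\, p \le 3n\sqrt{np} + d.
\]

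It remains to translate the left-hand side into a bound on part sizes. Suppose the largest part of $\Pi = (V_1, \dotsc, V_r)$ satisfies $|V_1| \ge n/r + \Delta$. Minimising $\sum_{i \ge 2}|V_i|^2$ subject to $\sum_{i \ge 2}|V_i| = n - |V_1|$ (the minimum is attained when all these parts are equal) gives $\sum_i |V_i|^2 \ge n^2/r + \Delta^2 \cdot r/(r-1)$, hence $|\int(\Pi)| \ge |\int(\Pi_0)| + \Delta^2/2 - O(n)$. Plugging this into the display and rearranging gives $\Delta^2 p \le O(n\sqrt{np} + d)$, from which careful constant-tracking recovers the stated inequality $\Delta \le 4 \max\{(d/m)^{1/2}, (n/m)^{1/4}\} \cdot n$. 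The convexity estimate is the only step with any real content; the probabilistic inputs are two routine Chernoff/Hoeffding bounds for hypergeometric variables, so I do not anticipate any serious obstacle beyond bookkeeping the constants to hit the prescribed factor of $4$.
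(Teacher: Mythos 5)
Your proof is correct, and it takes a route that is structurally very close to the paper's but with one genuinely different step. You both reduce to showing that an unbalanced cut has too few crossing edges: you both apply \Cref{cor:equiv_classes_edges_count} to the cut $\Pi$ of small deficit and both run a convexity argument (your computation with $\sum_i |V_i|^2$ is equivalent to the paper's Fact~\ref{fact:Pi-unbalanced}). The difference is the lower bound on $b_r(G)$. You introduce a second concentration event $\cE_2$ (hypergeometric upper tail for $|\int(\Pi_0)\cap G|$ on a fixed balanced cut) to get $b_r(G) \ge m - |\int(\Pi_0)|p - n\sqrt{np}$. The paper instead uses the \emph{deterministic} bound $b_r(G) \ge \frac{r-1}{r}m$, which follows from averaging over a uniformly random $r$-cut, and so needs only the single event from \Cref{cor:equiv_classes_edges_count}. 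The deterministic route is a little cleaner (no extra union bound to justify the $1 - e^{-n}$ and no $n\sqrt{np}$ loss from the reference cut), and in fact the deterministic bound is slightly stronger than your probabilistic one for all $p \le 1$. Your constant-tracking does close: following your chain one gets $\Delta^2 p \le 10n\sqrt{np} + 2d$ (absorbing lower-order $O(np)$ terms), which after dividing by $p$ gives $\Delta \le \sqrt{8.1}\max\{(n/m)^{1/4},(d/m)^{1/2}\}\cdot n < 4\max\{\cdot\}\cdot n$, matching the statement.
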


The proof of~\Cref{lemma:max_cut_balanced} uses the following straightforward estimate on the number of edges that cross a non-balanced cut.

\begin{fact}
  \label{fact:Pi-unbalanced}
  If $\Pi = (A_1, \dotsc, A_r)$ is an $r$-cut in $K_n$ with $\max_i |A_i| = n/r+\eps n$ for some $\eps \ge 0$, then
  \[
    |\ext(\Pi)| \le \left(1 - \frac{1}{r} - \frac{r\eps^2}{r-1}\right) \cdot \binom{n}{2}+ \frac{n}{2}.
  \]
\end{fact}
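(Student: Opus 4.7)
The plan is to reduce the bound on $|\ext(\Pi)|$ to an estimate for $\sum_i |A_i|^2$ and then to minimise the latter by a one-variable convexity argument. Writing $a_i = |A_i|$, the identity
\[
  |\ext(\Pi)| = \sum_{i<j} a_i a_j = \frac{1}{2}\left(n^2 - \sum_{i=1}^r a_i^2\right) = \binom{n}{2} + \frac{n}{2} - \frac{1}{2}\sum_{i=1}^r a_i^2
\]
shows that the claim is equivalent to proving that $\sum_i a_i^2 \ge \bigl(\tfrac{1}{r} + \tfrac{r\eps^2}{r-1}\bigr) n^2$; the additive $n/2$ on the right-hand side of the target inequality is exactly what is needed to absorb the difference between $n^2/2$ and $\binom{n}{2}$, so this lower bound on $\sum_i a_i^2$ will suffice.

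To prove this lower bound, I would relabel so that $a_1 = n/r + \eps n$, and hence $a_2 + \dotsb + a_r = n(r-1)/r - \eps n$. Among nonnegative $(a_2,\dotsc,a_r)$ with this fixed sum, $\sum_{i \ge 2} a_i^2$ is minimised when all these entries are equal, so by Cauchy--Schwarz
\[
  \sum_{i=2}^r a_i^2 \ge \frac{1}{r-1}\Bigl(\sum_{i=2}^r a_i\Bigr)^2 = \frac{n^2(r-1-r\eps)^2}{r^2(r-1)}.
\]
Adding $a_1^2 = n^2(1+r\eps)^2/r^2$ and expanding, the linear-in-$\eps$ contributions cancel: the coefficient of $\eps$ in $(r-1)(1+r\eps)^2 + (r-1-r\eps)^2$ is $2r(r-1) - 2r(r-1) = 0$. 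What remains is $r(r-1) + r^3\eps^2$, which after division by $r^2(r-1)/n^2$ yields precisely $\sum_i a_i^2 \ge n^2/r + r\eps^2 n^2/(r-1)$.

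Plugging this back into the identity for $|\ext(\Pi)|$ gives
\[
  |\ext(\Pi)| \le \binom{n}{2} + \frac{n}{2} - \Bigl(\frac{1}{r} + \frac{r\eps^2}{r-1}\Bigr)\cdot \frac{n^2}{2},
\]
and since $\binom{n}{2} \le n^2/2$, the last term dominates $\bigl(\tfrac{1}{r} + \tfrac{r\eps^2}{r-1}\bigr)\binom{n}{2}$, producing the stated inequality. The only mildly technical point is the cancellation of the $\eps$-linear terms, which is exactly the Lagrangian optimality condition for the minimiser $a_2 = \dotsb = a_r$; everything else is bookkeeping.
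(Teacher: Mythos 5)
Your proposal is correct and takes essentially the same approach as the paper: both express $|\ext(\Pi)|$ as $\binom{n}{2}$ minus the number of intra-part pairs and minimise the latter by fixing $a_1 = n/r+\eps n$ and equalising the remaining parts. The only cosmetic difference is that you work with $\sum_i a_i^2$ via Cauchy--Schwarz while the paper works with $\sum_i \binom{a_i}{2}$ via convexity of $\binom{x}{2}$ and then uses the auxiliary bound $\sum_i\bigl(\lambda_i^2\binom{n}{2}-\binom{\lambda_i n}{2}\bigr)\le n/2$ to pass to squares, which is where the additive $n/2$ arises in both arguments.
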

\begin{proof}
  We may clearly assume that $|A_1| = \max_i |A_i| = n/r+\eps n$.  By convexity of $x \mapsto \binom{x}{2}:=\frac{x(x-1)}{2}$,
  \[
    |\ext(\Pi)| = \binom{n}{2} - \sum_{i=1}^r \binom{|A_i|}{2} \le \binom{n}{2} - \binom{n/r+\eps n}{2} - (r-1) \cdot \binom{n/r-\eps n/(r-1)}{2}.
  \]
  Further, as for all nonnegative $\lambda_1, \dotsc, \lambda_r$ with $\lambda_1 + \dotsb + \lambda_r = 1$, we have
  \[
    \sum_{i=1}^r \left(\lambda_i^2 \binom{n}{2} - \binom{\lambda_in}{2}\right) = \sum_{i=1}^r \frac{(1-\lambda_i)\lambda_in}{2} \le \frac{n}{2},
  \]
  we may conclude that
  \[
    \begin{split}
      |\ext(\Pi)| & \le \left(1 - \left(\frac{1}{r}+\eps\right)^2 - (r-1) \left(\frac{1}{r} - \frac{\eps}{r-1}\right)^2\right) \cdot \binom{n}{2}+\frac{n}{2}\\
      & = \left(1 - \frac{1}{r} - \frac{r\eps^2}{r-1}\right) \cdot \binom{n}{2}+\frac{n}{2},
    \end{split}
  \]
  as claimed.
\end{proof}

\begin{proof}[Proof of~\Cref{lemma:max_cut_balanced}]
  Assume the assertion of \Cref{cor:equiv_classes_edges_count}, which holds with probability at least $1-e^{-n}$.
  Let $\Pi = (A_1,\dotsc,A_r)$ be an $r$-cut with deficit at most $d$ and let $\eps$ be the number satisfying $\max_i |A_i| = n/r+\eps n$.
  On the one hand, since every graph $G$ with $m$ edges satisfies $b_r(G) \ge \frac{r-1}{r} \cdot m$ (to see this, consider a uniformly random $r$-cut), the size of $\Pi$ in $G$ is at least $\frac{r-1}{r} \cdot m - d$.
  On the other hand, by the assumed assertion of \cref{cor:equiv_classes_edges_count} and \cref{fact:Pi-unbalanced}, letting $p \coloneqq m / \binom{n}{2}$,
  \[
    |\ext(\Pi) \cap G| \le |\ext(\Pi)| \cdot p + 2n\sqrt{np} \le \left(\frac{r-1}{r} - \frac{r\eps^2}{r-1}\right) \cdot m + \frac{np}{2} + 2n\sqrt{np}.
  \]
  This yields
  \[
    \frac{r\eps^2m}{r-1} \le d + \frac{np}{2} + 2n\sqrt{np} \le d + \frac{5n\sqrt{np}}{2} \le d + 4\sqrt{nm},
  \]
  which means that
  \[
    \max_i |A_i| - \frac{n}{r} = \eps n \le 2\max\left\{\sqrt{\frac{d}{m}}, \left(\frac{16n}{m}\right)^{1/4}\right\} \cdot n,
  \]
  as desired.
\end{proof}

\subsection{Proof of \Cref{thm:rigidity}}
\label{sc:proof_rigidity}

It will be convenient to define, for a graph $G$ on $\br{n}$,
\[
  \eq^r_{-1}(G) \coloneqq \int(\Pi_G),
\]
where $\Pi_G$ is some (canonically chosen) maximum $r$-cut in $G$.  By convexity of $x \mapsto \binom{x}{2}$,
\begin{equation}
  \label{eq:eq_-1^r}
  \left| \eq_{-1}^r(G) \right| \ge r \cdot \binom{n/r}{2} = \frac{n^2}{2r} - \frac{n}{2}.
\end{equation}
The following lemma lies at the heart of the proof of \Cref{thm:rigidity}.
Throughout this section, we denote $N \coloneqq \binom{n}{2}$.

\begin{lemma}
  \label{lemma:eq-d-d-plus-1}
  Let $G_1 \sim \Gnm$ and $G_2 \sim \Gnmp$ for some $m \in \{0, \dotsc, N-1\}$.
  For every $r \ge 2$ and all $d \ge -1$,
  \[
    \Ex\left|\eq^r_{d+1}(G_1) \setminus G_1\right| \ge  \frac{N-m}{m+1} \cdot \Ex\left|\eq^r_d(G_2) \cap G_2\right|.
  \]
\end{lemma}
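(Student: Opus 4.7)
My plan is to use the natural coupling between $\Gnm$ and $\Gnmp$ in which $G_2$ is obtained from $G_1$ by adding a single uniformly random non-edge, and then invoke (the contrapositive of) \Cref{lemma:eq-d-non-edges}. The first step is to record the standard distributional identity that sampling $G_2 \sim \Gnmp$ together with a uniformly random edge $e \in G_2$ produces the same joint distribution as sampling $G_1 \sim \Gnm$ together with a uniformly random non-edge $e \in K_n \setminus G_1$ and setting $G_2 \coloneqq G_1 \cup e$, since both descriptions yield a pair $(G, e)$ uniform over $(m+1)$-edge graphs $G$ equipped with a distinguished edge. By double counting, the right-hand side of the target inequality can then be rewritten as
\[
  \Ex\left|\eq^r_d(G_2) \cap G_2\right| = (m+1) \cdot \Pr\bigl(e \in \eq^r_d(G_1 \cup e)\bigr),
\]
where the probability is over $G_1 \sim \Gnm$ and $e$ uniform in $K_n \setminus G_1$.

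Next, I would appeal to the contrapositive of \Cref{lemma:eq-d-non-edges}: if $e \notin G_1$ and $e \in \eq^r_d(G_1 \cup e)$, then $e \in \eq^r_{d+1}(G_1)$. In the above coupling, $e \notin G_1$ holds deterministically, and so
\[
  \Pr\bigl(e \in \eq^r_d(G_1 \cup e)\bigr) \le \Pr\bigl(e \in \eq^r_{d+1}(G_1) \setminus G_1\bigr) = \Ex\!\left[\frac{|\eq^r_{d+1}(G_1) \setminus G_1|}{N-m}\right],
\]
where the final equality uses that $e$ is uniform in the deterministic-sized set $K_n \setminus G_1$. Multiplying by $m+1$ and rearranging would then yield the claimed bound.

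The only subtlety is the base case $d = -1$, which is not literally covered by \Cref{lemma:eq-d-non-edges}. For this case I would invoke \Cref{fact:eq-analogous-defs} directly: if $e \notin G_1 \cup \eq^r_0(G_1)$, then $e \in \ext(\Pi)$ for every maximum $r$-cut $\Pi$ of $G_1 \cup e$, and in particular for the canonically chosen $\Pi_{G_1 \cup e}$, so $e \notin \int(\Pi_{G_1 \cup e}) = \eq^r_{-1}(G_1 \cup e)$. With this observation in hand, the argument of the previous paragraph applies uniformly to all $d \ge -1$.

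As I see it, the main difficulty is not in the computation but in identifying the right coupling, namely the one in which $G_2$ is formed by adding a uniformly random non-edge to $G_1$, so that \Cref{lemma:eq-d-non-edges} precisely converts $(d,r)$-equivalent edges of $G_2$ into $(d+1,r)$-equivalent non-edges of $G_1$. Once the coupling is in place, the proof reduces to a short bookkeeping computation and a quick check of the $d = -1$ boundary case.
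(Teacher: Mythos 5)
Your proposal is correct and follows essentially the same path as the paper: the same coupling (adding a uniformly random non-edge to $G_1$), the same invocation of \Cref{lemma:eq-d-non-edges}, and the same bookkeeping converting probabilities about the coupled edge $e$ into expected cardinalities divided by $N-m$ or $m+1$. The only cosmetic difference is the $d=-1$ boundary case: you prove a $d=-1$ analogue of \Cref{lemma:eq-d-non-edges} via \Cref{fact:eq-analogous-defs} so the main computation applies uniformly, whereas the paper instead bounds $q_0 \cdot (m+1) = \Ex|\crit_r(G_2)|$ by $\Ex[b_r(G_2)]$ directly; both are short and equivalent.
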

\begin{proof}
  We may clearly couple $G_1$ and $G_2$ so that $G_2 = G_1 \cup e$ and $e$ is a uniformly random edge of each of $K_n \setminus G_1$ and $G_2$.  Define, for every $d \ge -1$,
  \[
    q_d \coloneqq \Pr\big(e \notin \eq^r_d(G_1)\big)
  \]
  and observe that
  \begin{equation}
    \label{eq:exp_setminus}
    q_d \cdot (N-m)  = \Ex\left|(K_n \setminus G_1) \setminus \eq^r_d(G_1)\right|
    = N - m - \Ex\left|\eq^r_d(G_1)\setminus G_1\right|.
  \end{equation}
  On the other hand, \Cref{lemma:eq-d-non-edges} implies that, for every $d \ge 0$,
  \begin{equation}
    \label{eq:exp_intersection}
    q_{d+1} \cdot (m+1) \le \Pr\big(e \notin \eq^r_d(G_2)\big) \cdot (m+1) = m+1 - \Ex\left|\eq^r_d(G_2) \cap G_2\right|.
  \end{equation}
  This inequality remains valid also when $d=-1$ since, by \Cref{fact:eq-analogous-defs},
  \[
    q_0 \cdot (m+1) = \Ex\left|\crit_r(G_2)\right| \le \Ex[b_r(G_2)]=
    m+1-\Ex\left|\eq^r_{-1}(G_2) \cap G_2\right|.
  \]
  The claimed inequality follows by combining~\eqref{eq:exp_setminus}, with $d$ replaced by $d+1$, and~\eqref{eq:exp_intersection}.  
\end{proof}

We now use \Cref{cor:equiv_classes_edges_count} to derive the following cleaner statement.

\begin{cor}
  \label{cor:eq-d-d-plus-1}
  Let $G_1 \sim \Gnm$ and $G_2 \sim \Gnmp$ for some $m \in \{0, \dotsc, N-1\}$.
  For every $r \ge 2$, all $d \ge -1$, and all sufficiently large $n$,
  \[
    \Ex\left|\eq_{d+1}^r(G_1)\right| \ge \Ex\left|\eq_d^r(G_2)\right| - \frac{2n^{9/2}}{\sqrt{m+1} \cdot (N-m)}.
  \]
\end{cor}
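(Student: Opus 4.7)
The plan is to combine \Cref{lemma:eq-d-d-plus-1} with two applications of \Cref{cor:equiv_classes_edges_count}. The crucial observation is that, for every $d \ge -1$, the set $\eq_d^r(G)$ coincides with $\int(\Pi)$ for some partition $\Pi$ of $\br{n}$ (namely, the partition of $\br{n}$ into $(d,r)$-components of $G$ when $d \ge 0$, and the partition induced by the canonical maximum $r$-cut $\Pi_G$ when $d = -1$). Consequently, \Cref{cor:equiv_classes_edges_count} applies to the (random) partitions induced by $\eq_d^r$ in both $G_1$ and $G_2$, letting us trade between $|\eq_d^r(G) \cap G|$ and $|\eq_d^r(G)|$.

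In the first step, I would apply \Cref{cor:equiv_classes_edges_count} to $G_1$ and rearrange the identity $|\eq_{d+1}^r(G_1) \setminus G_1| = |\eq_{d+1}^r(G_1)| - |\eq_{d+1}^r(G_1) \cap G_1|$ to obtain, with probability at least $1 - e^{-n}$,
\[
\bigl|\eq_{d+1}^r(G_1)\bigr| \ge \frac{N}{N-m}\bigl(\bigl|\eq_{d+1}^r(G_1) \setminus G_1\bigr| - 2n\sqrt{np}\bigr),
\]
where $p = m/N$. Since $|\eq_{d+1}^r(G_1) \setminus G_1| \le N$ deterministically, the exceptional event contributes at most an additive $N^2 e^{-n}/(N-m)$ in expectation, yielding an analogous lower bound on $\Ex|\eq_{d+1}^r(G_1)|$ in terms of $\Ex|\eq_{d+1}^r(G_1) \setminus G_1|$.

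In the second step, I would use \Cref{lemma:eq-d-d-plus-1} to bound $\Ex|\eq_{d+1}^r(G_1) \setminus G_1|$ from below by $\frac{N-m}{m+1}\Ex|\eq_d^r(G_2) \cap G_2|$, and then apply \Cref{cor:equiv_classes_edges_count} to $G_2$ to obtain $\Ex|\eq_d^r(G_2) \cap G_2| \ge p'\Ex|\eq_d^r(G_2)| - 2n\sqrt{np'} - (m+1)e^{-n}$, where $p' = (m+1)/N$.

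Substituting these into the outcome of Step~1, the leading factors combine as $\frac{N}{N-m} \cdot \frac{N-m}{m+1} \cdot p' = 1$, so the main term is exactly $\Ex|\eq_d^r(G_2)|$ and the error consolidates to $\frac{2n\sqrt{Nn}}{\sqrt{m+1}} + \frac{2n\sqrt{Nnm}}{N-m}$ plus exponentially small contributions. Using $\sqrt{N} \le n/\sqrt{2}$ and $\sqrt{Nm} \le N \le n^2/2$, each of these two polynomial terms is at most $\frac{n^{9/2}}{\sqrt{m+1}(N-m)}$, summing to the claimed $\frac{2n^{9/2}}{\sqrt{m+1}(N-m)}$, with enough polynomial slack to absorb the $O(n^4)e^{-n}$ remainder for $n$ large enough. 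There is no genuine obstacle: once the structural identification of $\eq_d^r(G)$ with $\int(\Pi)$ is made, the argument reduces to algebra; the only point requiring care is the correct bookkeeping of the rare events on which \Cref{cor:equiv_classes_edges_count} fails.
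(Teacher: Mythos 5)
Your proposal is correct and follows essentially the same route as the paper: identify $\eq_d^r(G)$ as $\int(\Pi)$ for a suitable partition $\Pi$ so that \Cref{cor:equiv_classes_edges_count} applies, use it to pass between $|\eq_d^r(G)|$ and $|\eq_d^r(G)\cap G|$ on each side, combine via \Cref{lemma:eq-d-d-plus-1}, and observe that the leading factors telescope to $1$. The only cosmetic difference is bookkeeping of the $e^{-n}$-probability exceptional events (the paper absorbs them by replacing $2n\sqrt{np}$ with $3n\sqrt{np}$ for $n$ large, whereas you track them explicitly and absorb them into the slack $(2-\sqrt{2})n^{9/2}/(\sqrt{m+1}(N-m))$ at the end); both are fine.
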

\begin{proof}
  Since, for all $d$, $r$, and $G$, the graph $\eq_d^r(G)$ is a union of cliques, \Cref{cor:equiv_classes_edges_count} gives, letting $p_1 \coloneqq m/N$ and $p_2 \coloneqq (m+1)/N$,
  \[
    \Ex\left| \eq^r_d(G_2) \cap G_2 \right| 
    \ge \Ex\left|\eq^r_d(G_2)\right| \cdot p_2-2n\sqrt{np_2}-n^2e^{-n} \ge \Ex\left|\eq^r_d(G_2)\right| \cdot p_2-3n\sqrt{np_2},
  \]
  where the last inequality holds for all sufficiently large $n$, and, similarly,
  \[
    \begin{split}
      \Ex\left| \eq^r_{d+1}(G_1)\setminus G_1\right| & = \Ex\left| \eq^r_{d+1}(G_1) \right| - \Ex\left|\eq_{d+1}^r(G_1) \cap G_1\right| \\
      & \le \Ex\left|\eq^r_{d+1}(G_1)\right|\cdot(1-p_1)+3n\sqrt{np_1}.
  \end{split}
  \]
  Consequently, \Cref{lemma:eq-d-d-plus-1} yields
  \[
    \begin{split}
      \Ex\left|\eq^r_{d+1}(G_1)\right|
      &\ge \frac{1}{1-p_1} \cdot \left(\Ex\left| \eq^r_{d+1}(G_1) \setminus G_1\right|-3n\sqrt{np_1}\right)\notag \\
      &\ge \frac{1}{1-p_1} \cdot \frac{N-m}{m+1} \cdot \Ex\left|\eq^r_d(G_2) \cap G_2 \right|-\frac{3n\sqrt{np_1}}{1-p_1}\notag \\
      &\ge \frac{p_2}{1-p_1} \cdot \frac{N-m}{m+1} \cdot \left(\Ex\left|\eq^r_d(G_2)\right|-\frac{3n\sqrt{np_2}}{p_2}\right)-\frac{3n\sqrt{np_1}}{1-p_1} \\
      & = \Ex\left|\eq^r_d(G_2)\right| - 3n^{3/2} \cdot \left(\frac1{\sqrt{p_2}} + \frac{\sqrt{p_1}}{1-p_1}\right).
    \end{split}
  \]
  The claimed inequality follows as
  \[
    \frac{1}{\sqrt{p_2}} + \frac{\sqrt{p_1}}{1-p_1} \le \frac{1}{\sqrt{p_2}} + \frac{\sqrt{p_2}}{1-p_1} = \frac{1-p_1+p_2}{\sqrt{p_2} \cdot (1-p_1)} = \frac{\sqrt{N} \cdot (N+1)}{\sqrt{m+1} \cdot (N-m)}.\qedhere
  \]
\end{proof}

We conclude this subsection with the proof of \Cref{thm:rigidity}, which now amounts to little more than summing the inequalities from the assertion of \Cref{cor:eq-d-d-plus-1} over a~range of $d$.

\begin{proof}[Proof of \Cref{thm:rigidity}]
  Let $\delta, \eps \in (0,1)$ be arbitrary and suppose that nonnegative integers $d, m, n, r$ satisfy the assumptions of the theorem.  Note that we may assume that $d \le \sqrt{m/n}$ and that $n$ is sufficiently large, as otherwise the asserted upper bound on the probability is larger than one.
  Let $G \sim \Gnm$ and let $G' \sim \Gnmdp$.
  It follows from \Cref{cor:eq-d-d-plus-1} that
  \[
    \begin{split}
      \Ex\left|\eq_d^r(G)\right| - \Ex\left|\eq_{-1}^r(G')\right|
      & \ge - \sum_{i=0}^{d} \frac{2n^{9/2} }{\sqrt{m+i+1} \cdot (N-m-i)} \\
      & \ge - \frac{2(d+1)n^{9/2}}{\sqrt{m} \cdot (\delta N-d)}
      \ge - \frac{5(d+1)n^{5/2}}{\delta\sqrt{m}}.
    \end{split}
  \]
  In particular, due to~\eqref{eq:eq_-1^r},
  \begin{equation}
    \label{eq:Ex-eq-d-r-lower}
    \Ex\left|\eq^r_d(G)\right| \ge \frac{N}{r} - \frac{n}{2} - \frac{5(d+1)n^{5/2}}{\delta \sqrt{m}}.
  \end{equation}
  On the other hand, since $(d/m)^{1/2} \le (mn)^{-1/4} \le (n/m)^{1/4}$, \Cref{lemma:max_cut_balanced} implies that, with probability at least $1-e^{-n}$, each part of every $r$-cut with deficit at most $d$ has at most $n/r+4(n/m)^{1/4} \cdot n$ vertices.  Thus, letting $a \coloneqq 4(n/m)^{1/4} \cdot n$, we have
  \[
    \begin{split}
      \left|\eq^r_d(G)\right|
      & \le r \cdot \binom{n/r+a}{2} \le r \cdot \binom{n/r}{2} + r \cdot \left(\frac{n}{r}+\frac{a}{2}\right)a \le \frac{N}{r} + na + \frac{ra^2}{2} \\
      & = \frac{N}{r} + 4(n/m)^{1/4} \cdot n^2 + 8r(n/m)^{1/2} \cdot n^2.
    \end{split}
  \]
  with probability at least $1-e^{-n}$.
  Finally, let
  \[
    P \coloneqq \Pr\big(\text{$G$ is not $(d,r,\eps)$-rigid}\big) = \Pr\big(\eq_d^r(G) \le (1-\eps)N/r\big).
  \]
  We may conclude that, for some absolute constant $C$,
  \begin{equation}
    \label{eq:Ex-eq-d-r-upper}
    \Ex\left|\eq_d^r(G)\right| \le P \cdot \frac{(1-\eps)N}{r} + (1-P) \cdot \left(\frac{1}{r} + C\sqrt[4]{\frac{n}{m}} + Cr\sqrt{\frac{n}{m}}\right) \cdot N + e^{-n} \cdot N.
  \end{equation}
  Combining~\eqref{eq:Ex-eq-d-r-lower} and~\eqref{eq:Ex-eq-d-r-upper}, we obtain
  \[
    P \le \frac{r}{\eps} \cdot \left(\frac{5(d+1)n^{5/2}}{\delta \sqrt{m} N} + C\sqrt[4]{\frac{n}{m}} + Cr\sqrt{\frac{n}{m}}\right),
  \]
  provided that $C$ is sufficiently large, as claimed.
\end{proof}

\subsection{Derivation of \Cref{cor:core}}
\label{sec:derivation-cor-core}

The following key lemma, which is a variant of \cite[Proposition~10.1]{DeMKah15Tur} (see also \cite[Proposition~7.2]{HosSam}), implies that every graph that is sufficiently rigid must contain a core, provided that each of its cuts with small deficit is balanced.

\begin{lemma}
  \label{lemma:core}
  Suppose that $r \ge 2$ and $\alpha \in (0, 1/(r^2+r))$ and let $d \ge 0$ be an arbitrary integer.
  If an $n$-vertex graph $G$ is $(d, r, \alpha/r)$-rigid and each part of every $r$-cut of $G$ with deficit at most $d$ has size at most $(1+\alpha)n/r$, then the $r$ largest $(d,r)$-components of $G$ have size at least $n/r-\alpha n$ each.
\end{lemma}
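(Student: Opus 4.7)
My plan is contrapositive: assuming $c_r < n/r - \alpha n$ for the sorted component sizes $c_1 \ge c_2 \ge \cdots$, I will derive that $G$ cannot be $(d,r,\alpha/r)$-rigid. Set $M \coloneqq (1+\alpha)n/r$, $N_r \coloneqq n - (r-1)M = n(1-(r-1)\alpha)/r$, and $y \coloneqq c_r$. The balance hypothesis forces $c_i \le M$ for every~$i$, since each $(d,r)$-component lies in a single part of some maximum $r$-cut (whose deficit is $0 \le d$).

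The key step is a convexity-based upper bound on the number of $(d,r)$-equivalent pairs $\sum_i \binom{c_i}{2}$ in terms of~$y$. Among all nonincreasing sequences with $\sum_i c_i = n$, $c_i \le M$ for every~$i$, and $c_r = y$, the sum $\sum_i c_i^2$ is maximised by the extremal configuration in which $r-1$ components sit at the cap~$M$, the $r$-th component has size~$y$, and the residual mass $N_r - y$ is packed into components of size at most~$y$. An elementary computation then yields
\[
  \sum_i c_i^2 \;\le\; (r-1)M^2 + \begin{cases} y^2 + (N_r - y)^2 & \text{if } y \ge N_r/2, \\ y N_r & \text{if } y \le N_r/2. \end{cases}
\]

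Combining this with the rigidity lower bound $\sum_i c_i^2 \ge (1-\alpha/r)\, n(n-1)/r + n$ yields a quadratic inequality in~$y$, which I would analyse in two regimes. The regime $y \le N_r/2$ turns out to be vacuous: the resulting linear inequality forces $y$ to exceed a threshold that, under $\alpha < 1/(r^2+r)$, lies strictly above $N_r/2$. In the regime $y \ge N_r/2$, the inequality becomes $y^2 - y N_r + C \ge 0$ with $C$ of order $\alpha n^2/r^2$, whose two roots $y_\pm$ satisfy $y_- < N_r/2$; hence only the branch $y \ge y_+$ survives.

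The crux, and the main technical obstacle, is the algebraic verification that $y_+ > n/r - \alpha n$ precisely under the hypothesis $\alpha < 1/(r^2+r)$. Squaring and simplifying, this reduces to $2\alpha > 2r(r+1)\alpha^2$, i.e., $r(r+1)\alpha < 1$, which is exactly the assumed bound on~$\alpha$. This contradicts the assumption $c_r = y < n/r - \alpha n$, completing the proof. Some care is needed to track the $O(n)$ lower-order corrections so that the sharp threshold $1/(r^2+r)$ is preserved, but otherwise the argument is purely an exercise in convex extremisation and quadratic analysis.
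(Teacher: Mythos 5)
Your proposal is correct and rests on the same two ingredients as the paper's proof: the contrapositive assumption $c_r < n/r - \alpha n$, the cap $c_i \le (1+\alpha)n/r$ coming from the balance hypothesis, and a convexity upper bound on $\sum_i\binom{c_i}{2}$ compared against the rigidity threshold $\frac{1-\alpha/r}{r}\binom{n}{2}$. The execution, however, is more laborious than needed. Rather than parametrising by $y = c_r$, deriving a quadratic inequality in $y$, and case-splitting on $y \gtrless N_r/2$, the paper simply observes that by convexity the supremum of $\sum_i\binom{c_i}{2}$ over all admissible configurations is attained at a single extremal one: $r-1$ components at the cap $(1+\alpha)n/r$, one component at the maximal permitted size $n/r-\alpha n$, and the residual mass $\alpha n/r$ forming a single further component (which is indeed at most $n/r - \alpha n$ because $\alpha < 1/(r+1)$). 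Substituting this configuration and using the sharp inequality $\binom{xn}{2}\le x^2\binom{n}{2}$ for $x\in[0,1]$ — which silently disposes of the $O(n)$ corrections you flag — yields in one line that $|\eq_d^r(G)| \le \frac{1-2\alpha/r+(r+1)\alpha^2}{r}\binom{n}{2} < \frac{1-\alpha/r}{r}\binom{n}{2}$ precisely under $\alpha < 1/(r^2+r)$. Your quadratic analysis does recover the same threshold (the root comparison $y_+ > n/r-\alpha n$ reduces, as you note, to $r(r+1)\alpha<1$, and the $y\le N_r/2$ regime is indeed vacuous), so the argument is sound; but plugging in the extremal $y$ directly is shorter and avoids both the two-regime analysis and the error-term bookkeeping.
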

\begin{proof}
  Suppose that each part of every $r$-cut of $G$ with deficit at most $d$ has size at most $(1+\alpha)n/r$.  If only at most $r-1$ largest $(d,r)$-components of $G$ had size at least $n/r-\alpha n$, then it would follow from convexity of $x \mapsto \binom{x}{2}$ that
  \[
    \begin{split}
      |\eq_d^r(G)| & \le (r-1) \binom{(1+\alpha)n/r}{2} + \binom{n/r-\alpha n}{2} + \binom{\alpha n/r}{2} \\
      & \le \left((r-1)\big((1+\alpha)/r\big)^2+(1/r-\alpha)^2+(\alpha/r)^2\right)\binom{n}{2} \\
      & = \frac{1-2\alpha/r+(r+1)\alpha^2}{r} \binom{n}{2} < \frac{1-\alpha/r}{r} \binom{n}{2},
    \end{split}
  \]
  which means that $G$ would not be $(d, r, \alpha/r)$-rigid.
\end{proof}

\begin{proof}[Proof of~\cref{cor:core}]
  Let $\alpha, \delta \in (0,1)$ be arbitrary and suppose that nonnegative integers $d, m, n, r$ satisfy the assumptions of the corollary.  Note that we may assume that $d \le \sqrt{m/n}$ and $4r^2/\alpha < \sqrt[4]{m/n}$, as otherwise the asserted upper bound on the probability is larger than one.  Since these assumptions imply that
  \[
    \frac{\alpha n}{r} > 4 \cdot \sqrt[4]{\frac{n}{m}} \cdot n = 4 \cdot \max\left\{\sqrt{\frac{d}{m}}, \sqrt[4]{\frac{n}{m}}\right\} \cdot n,
  \]
  we may invoke \cref{lemma:max_cut_balanced} to conclude that, with probability $1 - e^{-n}$, each part of every $r$-cut in $\Gnm$ with deficit at most $d$ has no more than $(1+\alpha) n/r$ vertices.  Consequently, by~\cref{lemma:core},
  \[
    \Pr\big(\Gnm \notin \Core_d^r(\alpha)\big) \le \Pr\big(\Gnm \text{ is not $(d,r,\alpha/r)$-rigid}\big) + e^{-n}.
  \]
  The assertion of the corollary now follows immediately from \Cref{thm:rigidity}.
\end{proof}

\subsection{Proof of \Cref{thm:main-rigidity-theorem}}
\label{sec:proof-main-rigidity-thm}

Since our assumption that $pn \gg 1$ implies that whp $\Gnp$ has $(1+o(1))\binom{n}{2}p$ edges, the first assertion of the theorem is a straightforward consequence of \Cref{cor:core}.
It thus remains to prove the second assertion.
The following observation is at the heart of the matter.
Given a graph $G$, a vertex $v \in V(G)$, and a set $W \subseteq V(G)$, we write $\deg_G(v,W)$ to denote the number of neighbours of $v$ in $W$.

\begin{lemma}
  \label{lemma:non-rigidity}
  Suppose that some largest $r$-cut $\Pi = \{V_1, \dotsc, V_r\}$ in a graph $G$ and a vertex $v \in V_1$ satisfy
  \[
    \deg_G(v, V_1) \ge \frac{\deg_Gv}{r} - \frac{r-1}{r} \cdot d
  \]
  for some nonnegative integer $d$.  Then $v$ is not $(d,r)$-equivalent to any other vertex of $G$.
\end{lemma}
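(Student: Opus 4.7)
The plan is to produce, for each vertex $u \neq v$, an explicit $r$-cut of deficit at most $d$ that places $u$ and $v$ in different parts; this witnesses that $v$ and $u$ are not $(d,r)$-equivalent. The starting observation is that, since $\Pi$ is a maximum $r$-cut, moving $v$ from $V_1$ to any other part cannot strictly increase the cut size, which yields $\deg_G(v, V_1) \le \deg_G(v, V_j)$ for every $j \in \{2, \dotsc, r\}$.

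The easy case is when $u \in V_j$ for some $j \ne 1$: the cut $\Pi$ itself already separates $u$ from $v$ and has deficit $0 \le d$. For the remaining case, when $u \in V_1 \setminus \{v\}$, I would for each $j \in \{2, \dotsc, r\}$ consider the cut $\Pi_j$ obtained from $\Pi$ by moving $v$ from $V_1$ to $V_j$. A direct count of how edges incident to $v$ switch between $\int$ and $\ext$ gives
\[
  |\ext(\Pi) \cap G| - |\ext(\Pi_j) \cap G| = \deg_G(v, V_j) - \deg_G(v, V_1),
\]
so the deficit of $\Pi_j$ equals $\deg_G(v, V_j) - \deg_G(v, V_1)$, which is nonnegative by maximality of $\Pi$.

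The final step is an averaging argument over $j \in \{2, \dotsc, r\}$. Using $\sum_{j=1}^r \deg_G(v, V_j) = \deg_G v$ together with the hypothesis $\deg_G(v, V_1) \ge \deg_G v / r - (r-1)d/r$, I get
\[
  \sum_{j=2}^r \bigl(\deg_G(v, V_j) - \deg_G(v, V_1)\bigr) = \deg_G v - r \deg_G(v, V_1) \le (r-1) d.
\]
Hence some $j^* \in \{2, \dotsc, r\}$ satisfies $\deg_G(v, V_{j^*}) - \deg_G(v, V_1) \le d$, and $\Pi_{j^*}$ is then an $r$-cut of deficit at most $d$ separating $v$ from $u$, as required.

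There is no real obstacle here: the proof is genuinely short once one notices the maximality inequality $\deg_G(v, V_1) \le \deg_G(v, V_j)$ and uses the hypothesis as a budget on the total ``slack'' of $v$ across the other parts. The only mild subtlety is to make sure that the cut $\Pi_j$ actually has the prescribed deficit rather than just being a cut of size at least $b_r(G) - d$; this is automatic because $\Pi$ has deficit $0$, so the change in size equals the deficit of $\Pi_j$.
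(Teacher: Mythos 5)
Your proof is correct and essentially identical to the paper's: both move $v$ to another part, compute the deficit of the resulting cut as $\deg_G(v,V_j)-\deg_G(v,V_1)$, and use the same averaging over $j$ to find a cut of deficit at most $d$. The only cosmetic difference is that you treat the case $u\notin V_1$ separately via $\Pi$ itself, whereas the paper disposes of all $u\neq v$ at once by noting that no such $u$ can share a part with $v$ in both $\Pi$ and $\Pi_{j^*}$.
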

\begin{proof}
  For each $i \in \{2, \dotsc, r\}$, denote by $\Pi_i$ the $r$-cut obtained from $\Pi$ by moving the vertex $v$ from $V_1$ to $V_i$ and observe that
  \[
    \begin{split}
      |\ext(\Pi_i) \cap G|
      & = |\ext(\Pi) \cap G| + \deg_G(v, V_1) - \deg_G(v, V_i) \\
      & = b_r(G) + \deg_G(v, V_1) - \deg_G(v, V_i).
    \end{split}
  \]
  Consequently, writing $\deficit_G(\Pi_i) \coloneqq b_r(G) - |\ext(\Pi_i) \cap G|$ for the deficit of $\Pi_i$ in $G$, we have
  \[
    \sum_{i=2}^r \deficit_G(\Pi_i) = \sum_{i=2}^r \left(\deg_G(v, V_i) - \deg_G(v, V_1)\right) = \deg_Gv - r\deg_G(v,V_1) \le (r-1)d.
  \]
  In particular, there must be some $i \in \{2, \dotsc, r\}$ for which $\deficit_G(\Pi_i) \le d$.  The assertion now follows as no vertex $u \neq v$ shares with $v$ the same part of both $\Pi$ and $\Pi_i$.
\end{proof}

Suppose now that $G \sim \Gnp$.
Let $\Pi$ be an arbitrary largest $r$-cut in $G$ and define, for each nonnegative integer $d$,
\[
  X_d \coloneqq \left\{v \in V(G) : v \in V \in \Pi \text{ and } \deg_G(v, V) < \frac{\deg_Gv}{r} - \frac{r-1}{r} \cdot d\right\}.
\]
Since \Cref{lemma:non-rigidity} implies that $\eq_d^r(G) \subseteq \binom{X_d}{2}$, it suffices to show that whp $|X_d| < \alpha n$ when $d \ge C\sqrt{pn}$ for large enough $C$.

The assumption that $\Pi$ is a largest $r$-cut in $G$ implies that $\deg_G(v,V) \le (\deg_Gv)/ r$ for each $v \in V \in \Pi$ and therefore
\[
  \frac{r-1}{r} \cdot \left(2|G| + |X_d| \cdot d\right) \le \sum_{v \in V \in \Pi} \left(\deg_Gv - \deg_G(v, V)\right) = 2\left|\ext(\Pi) \cap G\right|.
\]
This yields
\[
  |X_d| \le \frac{2r \cdot |\ext(\Pi) \cap G| - 2(r-1) \cdot |G|}{(r-1) \cdot d} = \frac{2|G| - 2r \cdot |\int(\Pi) \cap G|}{(r-1) \cdot d}.
\]
In particular, it follows from \Cref{cor:equiv_classes_edges_count} that, with probability at least $1 - e^{-n}$,
\[
  |X_d| \le \frac{2|G| - 2r \cdot |\int(\Pi)| \cdot |G| / \binom{n}{2} + 4r \cdot n\sqrt{n|G|/\binom{n}{2}}}{(r-1) \cdot d}.
\]
Finally, since $r \cdot |\int(\Pi)| \ge r^2 \cdot \binom{n/r}{2} \ge \binom{n}{2} - \frac{rn}{2}$, by convexity of $x \mapsto \binom{x}{2}$, and since whp we have $|G| \le 2\binom{n}{2}p$ and
\[
  |X_d| \le \frac{2rnp + 6rn\sqrt{np}}{(r-1) \cdot d} < \frac{7rn\sqrt{np}}{(r-1) \cdot d}.
\]
In particular, if $d \ge 14\sqrt{np}/\alpha$, then $|X_d| < \alpha n$, as
desired.

\subsection{Non-uniqueness of maximum cuts and vertices outside of the core}
\label{subsection:complement-of-the-core}

In this subsection, we present supporting evidence for our suspicion that whp $\Gnm$ does not have a unique maximum cut for all $m$ satisfying $n \ll m \le N - \Omega(N)$.\footnote{This is vacuously true when $m \le n \log n / 2 - \omega(n)$, since then whp $\Gnm$ has isolated vertices.}
By \cite[Lemma~14]{C-OMS06}, with probability very close to one, the maximum size of an $r$-cut in $\Gnm$ is $\frac{r-1}{r} \cdot m + \Theta(\sqrt{mn})$.
It is reasonable to believe that the expectation of this random variable grows `smoothly' with $m$ in the sense that $\Ex[b_r(\Gnmp) - b_r(\Gnm)] = \frac{r-1}{r} + \Theta(\sqrt{n/m})$.
If this estimate held conditionally on $\Gnm$, under the natural coupling for which $\Gnm \subseteq \Gnmp$, that is, if
\begin{equation}
  \label{eq:partial-br}
  \partial b_r \coloneqq \Ex[b_r(\Gnmp) \mid \Gnm] - b_r(\Gnm) \ge \frac{r-1}{r} + \Theta(\sqrt{n/m}),
\end{equation}
then $\Gnm$ would not have a unique maximum $r$-cut.
Indeed, it follows from \cref{fact:eq-analogous-defs} that, if $\Pi$ is the unique maximum $r$-cut of $\Gnm$, then
\[
  \partial b_r = \frac{|\ext(\Pi) \setminus \Gnm|}{\binom{n}{2} - m} \le \frac{r-1}{r}.
\]
Unfortunately, we do not even know how to prove that \eqref{eq:partial-br} holds with probability $\Omega(1)$.
Instead, we show that the expected number of vertices that lie outside of the core is large in either $\Gnm$ or in $\Gnmp$, and thus also in $\Gnp$.

Given a graph $G$ and an integer $r \ge 2$, denote by $X_r(G)$ the set of vertices not belonging to the $(0,r)$-core of $G$ (so that $X_r(G) = V(G)$ if $G$ does not have a $(0,r)$-core).
Further, set $x_r(G) \coloneqq |X_r(G)|$.
By the definition of the $(0,r)$-core, $x_r(G) = 0$ if and only if $G$ has a unique largest $r$-cut whose each part has more than $|V(G)|/(r+1)$ vertices.\footnote{\Cref{lemma:max_cut_balanced} implies that, for all $m \gg n$, with probability at least $1 - e^{-n}$, each part of every maximum $r$-cut of $\Gnm$ has $n/r \pm o(n)$ vertices.} We show that our upper bound on $\Ex[x_r(\Gnp)]$ provided by~\eqref{eq:Ex-eq-d-r-lower} is tight up to a constant factor. %The following proposition is the main result of this subsection while its proof is left for the Appendix.

\begin{prop}\label{prop:vertices-outside-the-core}
  Let $r \ge 2$ be an integer and let $\delta$ be a positive constant.
  If $1/n \ll p \le 1 - \delta$, then
	$$\Ex[x_r(\Gnp)] = \Omega\left(\sqrt{n/p}\right).$$
\end{prop}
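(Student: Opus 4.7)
The plan is to exploit the identity $\partial b_r(G) \coloneqq \Ex[b_r(G \cup e) - b_r(G) \mid G] = \Pr(e \notin \eq_0^r(G) \mid G)$ (valid for $e$ a uniformly random non-edge of $G$, by \cref{fact:eq-analogous-defs}) in two directions: as an upper bound on $\partial b_r$ in terms of $x_r(G)$, and as an averaged lower bound obtained from the asymptotics of $\Ex[b_r(G_{n,m})]$ proved in \cite{C-OMS06}. For the upper bound, the $(0,r)$-components $C_1,\dotsc,C_s$ of $G$ contribute disjoint cliques to $\eq_0^r(G)$, so convexity of $x \mapsto \binom{x}{2}$ applied to the $r$ largest parts yields $|\eq_0^r(G)| \ge r\binom{(n-x_r(G))/r}{2}$; applying \cref{cor:equiv_classes_edges_count} to the complement $\overline G \sim G(n, N-m)$ with the $(0,r)$-component partition of $V$ then gives, whp,
\[
  \partial b_r(G_{n,m}) \le \frac{r-1}{r} + \frac{2\,x_r(G_{n,m})}{rn} + O\!\left(\frac{1}{\sqrt{(1-p)n}}\right),
\]
and the analogous inequality for expectations.

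For the matching lower bound, I would telescope $\sum_{m'=M_1}^{M_2-1} \Ex[\partial b_r(G_{n,m'})] = \Ex[b_r(G_{n,M_2})] - \Ex[b_r(G_{n,M_1})]$ and combine with the two-sided estimate $c_-\sqrt{mn} \le \Ex[b_r(G_{n,m})] - \tfrac{r-1}{r}m \le c_+\sqrt{mn}$ from \cite[Lemma~14]{C-OMS06}, with $0 < c_- \le c_+$ depending only on $r$. Choosing $M_2/M_1$ a sufficiently large constant (depending only on $c_+/c_-$) forces $c_-\sqrt{M_2 n} - c_+\sqrt{M_1 n} = \Omega(\sqrt{M_2 n})$. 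Combining with the pointwise upper bound above and placing $M_1 \asymp pN$, so that the error $O((1-p)^{-1/2} n^{-1/2})$ is dominated by the signal $\Omega(\sqrt{n/M_1}) = \Omega((pn)^{-1/2})$, yields
\[
  \frac{1}{M_2 - M_1} \sum_{m' \in [M_1,M_2)} \Ex[x_r(G_{n,m'})] = \Omega\!\left(\sqrt{n/p}\right).
\]
Finally, one transfers from $G(n,m)$ to $G_{n,p}$ via the representation $G_{n,p} \stackrel{d}{=} G_{n,\Bin(N,p)}$ and a suitable choice of $M_1, M_2 \asymp pN$.

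I expect two technical difficulties. First, the signal $\Omega((pn)^{-1/2})$ only dominates the error $O((1-p)^{-1/2} n^{-1/2})$ when $(1-p)/p \gtrsim (c_+/c_-)^2$, so the naive argument handles $p \le 1 - \delta$ only when $\delta$ exceeds a constant determined by the constants in \cite{C-OMS06}; the regime of $p$ close to $1$ may require a tighter concentration inequality tailored to the (random) $(0,r)$-component partition rather than the wasteful union bound implicit in \cref{cor:equiv_classes_edges_count}. Second, and more significantly, the telescoping window $[M_1, M_2]$ has length $\Theta(pN)$, much wider than the $\Theta(\sqrt{pN(1-p)})$ concentration width of $\Bin(N,p)$, so the averaged lower bound could in principle be achieved on a subwindow lying in the tails of the binomial distribution and thus fail to transfer to $\Ex[x_r(G_{n,p})]$. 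This is presumably the point behind the paper's remark that the expected size of $X_r$ is large `in either $\Gnm$ or $\Gnmp$': the averaging argument must be localised to consecutive values of $m$ near $pN$, perhaps by combining a shorter telescoping window with a stability/Lipschitz-type estimate for $m' \mapsto \Ex[b_r(G_{n,m'})] - \tfrac{r-1}{r}m'$ in that vicinity.
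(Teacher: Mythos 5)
You begin from the same coupling as the paper --- $G_2 = G_1 \cup e$ with $\Pr(e \in \crit_r(G_2)) = \Pr(e \notin \eq_0^r(G_1))$ --- and the idea of upper-bounding the latter via the core and $x_r$ is sound, but both difficulties you flag in the lower bound are genuine and, absent new ideas, fatal. The telescoping window has width $\Theta(pN)$, far wider than the concentration width of the edge count of $\Gnp$, so the averaged lower bound provides no information at any specific $m$; the C-OMS06 estimate is $\Theta$-only, and there is no ready Lipschitz or monotonicity estimate for $m \mapsto \Ex[b_r(\Gnm)] - \tfrac{r-1}{r}m$ that would localize the signal. Your second difficulty --- the $O((1-p)^{-1/2}n^{-1/2})$ error from applying \cref{cor:equiv_classes_edges_count} to $\overline G$ with the random core partition, which swamps the $\Omega((pn)^{-1/2})$ signal as soon as $p$ exceeds a fixed constant --- is specific to the estimate you chose but also needs fixing.

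The paper circumvents both gaps by never passing to a \emph{difference} of expectations: the key is the deterministic inequality
\[
  |\crit_r(G_2)| \ge |\ext(\core_0^r(G_2)) \cap G_2| \ge b_r(G_2) - |\partial_{G_2}(X_r(G_2))|,
\]
which gives $\Pr(e \in \crit_r(G_2)) \ge \Ex\big[(b_r(G_2) - |\partial_{G_2}(X_r(G_2))|)/(m+1)\big]$ and thus lets one invoke the C-OMS06 estimate $\Ex[b_r(\Gnmp)] = \tfrac{r-1}{r}m + \Theta(\sqrt{nm})$ at a single $m$ rather than through its rate of change. For the upper bound on $\Pr(e \notin \eq_0^r(G_1))$ the paper uses the deterministic bound $|\int(\core_0^r(G_1)) \cap G_1| \le m/r$ (such edges cross no maximum cut and $b_r(G_1) \ge \tfrac{r-1}{r}m$ always), whose error is $O((1-p)^{-1}n^{-1})$, negligible for all $p \le 1-\delta$ and requiring no concentration of $\overline G$. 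Matching the two sides at each $m$ yields $\tfrac{\Ex[x_r(G_1)]}{n} + \tfrac{\Ex|\partial_{G_2}(X_r(G_2))|}{m} = \Omega(\sqrt{n/m})$; a separate uniform bound on boundary sizes (\cref{claim:lower-bound-on-partial-X}) converts the second summand into an almost-linear function of $\Ex[x_r(G_2)]$, giving the local statement $\Ex[x_r(\Gnm)] + \Ex[x_r(\Gnmp)] = \Omega(\sqrt{n^3/m})$, from which the proposition follows.
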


We postpone the proof of Proposition~\ref{prop:vertices-outside-the-core} to Appendix in order to avoid overloading this section with extra technical details since its proof method resembles the proof of Corollary~\ref{cor:eq-d-d-plus-1}.

\section{Neighbourhood in maximum cuts}
\label{sec:neighbourhood}

In this section, we prove \Cref{thm:neighbourhood}.
It will be convenient to first introduce some notation.
Given a graph $G$, a vertex $v \in V(G)$, and a set of vertices $S \subseteq V(G)$, we denote by $N_G(v)$ and $N_G(v; S)$ the sets of neighbours of $v$ in $V(G)$ and in $S$, respectively.
Further, given a set $W \subseteq V(G)$, we denote by $N_G(W)$ and by $N_G(W;S)$ the set of common neighbours of $W$ in $V(G)$ and in $S$, respectively; note that usually $N_G(W)$ denotes the union of $N_G(v)$ over $v\in W$, rather than their intersection.
That is,
\[
  N_G(W) \coloneqq \bigcap_{v \in W} N_G(v)
  \qquad\text{and}\qquad
  N_G(W; S) \coloneqq \bigcap_{v \in W} N_G(v; S).
\]
Finally, we will denote by $\Hyp(N,s,t)$ the size of the intersection of a random $t$-element subset of $\br{N}$ with the set $\br{s}$ (this is a hypergeometric random variable).

Let us fix integers $k \ge 1$, $r \ge 2$ and a small $\eps > 0$.
Further, assume that $p$ satisfies
\[
  C \left(\frac{\log n}{n}\right)^{1/k} \le p \le 1 -\eps
  \qquad
  \text{and}
  \qquad
  p \gg \frac{(\log n)^2}{n}
\]
for a sufficiently large $C>0$.
Our aim is to prove that whp every maximum $r$-cut $(V_1, \dotsc, V_r)$ of $\bG \sim \Gnp$ and every $k$-element $W \subseteq V(G)$ satisfy
\[
  |N_{\bG}(W;V_i)| = \left(\frac{1}{r} \pm \eps\right) np^k
  \quad
  \text{for all $i \in \br{r}$}.
\]
Denote by $\cB$ the event that there exists a set $W$ of $k$ vertices and a maximum $r$-cut $(V_1, \dots, V_r)$ of $\bG$ such that, for some $i\in\br{r}$,
\[
  |N_{\bG}(W; V_i)| \le \frac{1 - \eps}{r} \cdot |N_{\bG}(W)|.
\]
Moreover, for a $k$-set $W\subseteq V(\bG)$, denote by $\cY_W$ the event that $|N_{\bG}(W)| = (1\pm\eps^2)np^k$ and set $\cY \coloneqq \bigcap_W\cY_W$.
Since on the event $\cB^c \cap \cY$, we have
\begin{multline*}
  \left(\frac{1}{r}-\eps \right)np^k
  \le \frac{1 - \eps}{r} \cdot \left(1-\eps^2\right)np^k
  \le |N_{\bG}(W;V_i)|
  \le |N_{\bG}(W)| - \sum_{j\neq i}|N_{\bG}(W;V_j)| \\
  \le \left( 1 + \eps^2 \right)np^k - (r-1) \cdot \frac{1 - \eps}{r}np^k
  \le \left( \frac{1}{r}+\eps \right)np^k,
\end{multline*}
in order to complete the proof of \Cref{thm:neighbourhood}, it is sufficient to prove that
\begin{equation}
  \Pr(\cY^c) + \Pr(\cB)=o(1).
  \label{eq:neighb_proof_1}
\end{equation} 

By the Chernoff bound and our assumption on $p$ and the choice of $C$, for every $k$-element $W
\subseteq V(G)$,
\[
  \Pr\left( |N_{\bG}(W)| \neq (1\pm\eps^2)np^k \right) \le \exp\left(
    -\eps^4np^k/5 \right) \le n^{-2k};
\]
the estimate $\Pr(\cY^c) = o(1)$ follows from a straightforward union bound over all $W$.

We now turn to bounding $\Pr(\cB)$.
Let $\alpha$ be a small positive constant.
Recall that $\bG \in \Core_d^r(\alpha)$ means that $\bG$ has a $(d, r)$-core whose each component has at least $n/r-\alpha n$ vertices and that $\core_d^r(\bG)$ denotes the set of the $r$ components of the $(d, r)$-core of
$\bG$.
(If $\bG \notin \Core_d^r(\alpha)$, then we set $\core_d^r(\bG) = \emptyset$.)
For a $k$-set $W \subseteq V(\bG)$, denote by $\cA_W$ the event that
\begin{equation}
  \label{eq:A_W-condition}
  \left|N_\bG(W; S)\right| \le \frac{1 - \eps}{r} \cdot \left|N_\bG(W)\right|
  \quad
  \text{for some $S \in \core_0^r(\bG)$}
\end{equation}
and let $\cA \coloneqq \bigcup_W \cA_W$.
Set $t \coloneqq \lceil\sqrt{C} \log n\rceil$ and let $d \coloneqq kt$.
The key observation is that, if $\bG$ has a $(0,r)$-core and $\cB$ holds, then $\cA$ holds as well.
Therefore,
\[
  \Pr(\cB) \le \mathbb(\bG \notin \Core_{2d}^r(\alpha)) + \Pr(\bG \in \Core_{2d}^r(\alpha) \wedge \cA).
\]
The first summand above is $o(1)$, by \Cref{cor:core}, whereas
\begin{equation}
  \label{eq:neighb_proof_3}
  \Pr(\bG \in \Core_{2d}^r(\alpha) \wedge \cA)
  \le \Pr(\cY^c) + \sum_{W} \Pr(\bG \in \Core_{2d}^r(\alpha) \wedge \cA_W \mid \cY_W).
\end{equation}
Since we have already shown that $\Pr\left(\cY^c\right)=o(1)$, it suffices to prove that each term in the sum is much smaller than $n^{-k}$.

To this end, fix a $k$-element set $W$ and, assuming that $\cY_W$ holds, let $\bR$ be a uniformly chosen $t$-element subset of $N_{\bG}(W)$ (clearly, $\cY_W$ implies that $|N_{\bG}(W)| \ge t$).
Denote by $\cA_{W,t}$ the event that
\begin{equation}
  \label{eq:neighb_proof_2}
  |\bR \cap S| \le \frac{1 - \eps/2}{r} \cdot t
  \quad
  \text{for some $S \in \core_0^r(\bG)$}
\end{equation}
and observe that
\begin{equation}
  \label{eq:A_W-and-A_W^t}
  \begin{split}
    \Pr(\bG \in \Core_{2d}^r(\alpha) \wedge \cA_{W,t} \mid \cY_W)
    & \ge \Pr(\bG \in \Core_{2d}^r(\alpha) \wedge \cA_{W} \mid \cY_W) \\
    & \qquad \cdot \Pr(\cA_{W,t} \mid \bG \in \Core_{2d}^r(\alpha) \wedge \cA_{W} \wedge \cY_W).
  \end{split}
\end{equation}
If $\cY_W$ holds (so that $\bR$ is defined) and $\bG$ has a $(0,r)$-core, then, for every $S \in \core_0^r(\bG)$, the cardinality $|\bR \cap S|$ is distributed like $\Hyp(|N_{\bG}(W)|, |N_{\bG}(W;S)|, t)$.
In particular, it follows from standard tail bounds for the hypergeometric distribution~\cite[Section~6]{Hoe63} that whp (in the choice of $\bR$) \eqref{eq:A_W-condition} implies \eqref{eq:neighb_proof_2}.
Thus, the second factor in the right-hand side of~\eqref{eq:A_W-and-A_W^t} approaches $1$, implying
\begin{equation}
  \label{eq:upper-bound-for-A_W}
  \Pr(\bG \in \Core_{2d}^r(\alpha) \wedge \cA_{W} \mid \cY_W) \le (1+o(1)) \cdot \Pr(\bG \in \Core_{2d}^r(\alpha) \wedge \cA_{W,t} \mid \cY_W).
\end{equation}
Now, if the distribution of $\bR$, conditioned on $\core_0^r(\bG)$, was the uniform distribution on $t$-element subsets of $V\setminus W$, then, for every $S \in \core_0^r(\bG)$, the cardinality $|\bR \cap S|$ would again have hypergeometric distribution, implying that $\cA_{W,t} $ has exponentially small probability.
We will show something marginally weaker:
The distribution of $\bR$ is approximately uniform after we condition on the graph $\bGs$ that is obtained from $\bG$
by resampling all edges between $W$ and $\bR$.
Since $\bGs$ satisfies $\core_d^r(\bGs) \preceq \core_0^r(\bG)$, by \Cref{cor:core-d-nested},
this will allow us to prove the following estimate.

\begin{claim}
  \label{claim:upper-bound-for-A_W-t}
  There exists a $c = c(r,\alpha,\eps) > 0$ such that,
  for every $k$-element set $W$,
  \[
    \Pr\left(\bG \in \Core_{2d}^{r}(\alpha) \wedge \cA_{W,t} \mid \cY_W\right) \le e^{-ct}.
  \]
\end{claim}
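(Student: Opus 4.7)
My plan is to formalise the resampling argument sketched just before the claim. Let $\bGs$ be the graph obtained from $\bG$ by replacing every edge in $W \times \bR$ with an independent fresh $\mathrm{Bernoulli}(p)$ sample, so $|\bG \triangle \bGs| \le d = kt$.

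The first step is to show that, under $\bG \in \Core_{2d}^r(\alpha)$, the graph $\bGs$ lies in $\Core_d^r(\alpha)$ and the refinement $\core_d^r(\bGs) \preceq \core_0^r(\bG)$ is \emph{bijective}: every $S \in \core_0^r(\bG)$ contains exactly one $S^* \in \core_d^r(\bGs)$. The refinement itself is immediate, since every $r$-cut of $\bG$ has deficit at most $|\bG \triangle \bGs| \le d$ in $\bGs$, so $(d,r)$-equivalence in $\bGs$ implies $(0,r)$-equivalence in $\bG$; the fact that both cores are well defined (with $r$ components each) is furnished by \Cref{cor:core-d-nested} after absorbing an $o(1)$ slack via \Cref{cor:core}. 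Bijectivity follows from a pigeonhole argument as in the proof of \Cref{lemma:core}: if some $S \in \core_0^r(\bG)$ contained no element of $\core_d^r(\bGs)$, the other $r-1$ components of $\core_0^r(\bG)$ would together contain all $r$ elements of $\core_d^r(\bGs)$, whose total size is at least $n - \alpha r n$, forcing $|S| \le \alpha r n < n/(r+1)$ and contradicting the core property given $\alpha < 1/(r^2+r)$. Consequently, $\cA_{W,t}$ implies the event $\cA^*_{W,t} := \{\exists S^* \in \core_d^r(\bGs) : |\bR \cap S^*| \le (1-\eps/2)t/r\}$, via $|\bR \cap S^*| \le |\bR \cap S|$ whenever $S^* \subseteq S$.

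A direct Bayes computation, using the joint density of $(\bG, \bR, \bGs)$ under the resampling coupling, next yields
\[
  \Pr(\bR = R \mid \bGs) \propto 1 \big/ \binom{t + n_W - r_W(R)}{t},
\]
where $n_W := |N_\bGs(W)|$ and $r_W(R) := |R \cap N_\bGs(W)| \in \{0, \dotsc, t\}$. The weight varies across $R$ by at most the factor $\binom{t+n_W}{t}/\binom{n_W}{t} \le \exp(O(t^2/n_W))$, so the Radon--Nikodym derivative of $\Pr(\bR \in \cdot \mid \bGs)$ with respect to the uniform measure on $\binom{V \setminus W}{t}$ is bounded uniformly by $\exp(O(t^2/n_W))$. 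Under this uniform measure, conditional on $\bGs$, each $|\bR \cap S^*|$ is $\Hyp(n-k, |S^* \setminus W|, t)$ with mean at least $t/r - O(\alpha t) \ge (1 - \eps/4) t/r$ for $\alpha$ small in terms of $\eps$; Hoeffding's inequality for the hypergeometric distribution~\cite[Section~6]{Hoe63} then gives $\Pr_\mathrm{unif}(|\bR \cap S^*| \le (1-\eps/2) t/r \mid \bGs) \le \exp(-\Omega(\eps^2 t/r^2))$, and a union bound over the $r$ components of $\core_d^r(\bGs)$ yields $\Pr_\mathrm{unif}(\cA^*_{W,t} \mid \bGs) \le r \exp(-\Omega(\eps^2 t/r^2))$.

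Combining the previous two steps,
\[
  \Pr(\cA^*_{W,t} \mid \bGs) \le r \exp\bigl(O(t^2/n_W) - \Omega(\eps^2 t / r^2)\bigr).
\]
Under $\cY_W$ and the hypothesis $np^k \ge C \log n$, we have $n_W \ge (1-\eps^2)np^k - t \ge (C/2) \log n$ for $C$ large, and together with the choice $t = \lceil \sqrt{C} \log n \rceil$ this yields $t^2/n_W \le O(t/\sqrt{C})$. Choosing $C$ large enough in terms of $r, \alpha, \eps$ makes the Hoeffding gain dominate, leaving a net exponent of at most $-ct$ for some $c = c(r, \alpha, \eps) > 0$; integrating over $\bGs$ (and absorbing the negligible event that $|N_\bGs(W)|$ is atypically small via Chernoff) completes the proof. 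The main obstacle lies in the bookkeeping in step one: ensuring that both cores are well defined at all intermediate deficit levels so that \Cref{cor:core-d-nested} can be applied cleanly, while preserving the bijectivity afforded by $\alpha < 1/(r^2+r)$.
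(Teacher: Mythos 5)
Your proposal follows essentially the same route as the paper: resample the edges between $W$ and $\bR$ to obtain $\bGs$, show that conditioned on $\bGs$ the distribution of $\bR$ is nearly uniform over $\binom{\br{n}\setminus W}{t}$, transfer the event $\cA_{W,t}$ to an event $\cA^*_{W,t}$ formulated in terms of $\core_d^r(\bGs)$ via \Cref{cor:core-d-nested}, and finish with a hypergeometric lower-tail bound. Your Bayes computation $\Pr(\bR = R \mid \bGs) \propto \binom{t + n_W - r_W(R)}{t}^{-1}$ is exactly the paper's. That said, you are more careful than the paper in two places, and both are worth noting. First, the paper asserts that $\core_d^r(\bGs)\preceq\core_0^r(\bG)$ and $\cA_{W,t}$ together imply $\cA^*_{W,t}$, but this step requires every component of $\core_0^r(\bG)$ to contain some component of $\core_d^r(\bGs)$ — that is, the containment map must be surjective. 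You make this explicit via the size-count argument using $\alpha<1/(r^2+r)$; the paper leaves it implicit. Second, and more substantively, the paper bounds the Radon--Nikodym factor by $(\tfrac{1+\eps^2}{1-2\eps^2})^t\le e^{4\eps^2 t}$ and claims this is beaten by the hypergeometric tail $\exp(-\Omega(\eps^2 t/r^2))$, but as written those two exponents are of the same order in $\eps^2 t$ with the wrong sign dominating once $r\ge 2$. What actually saves the argument is the sharper bound $e^{O(t^2/n_W)}=e^{O(t/\sqrt{C})}$ that you derive, which can be made negligible relative to the Hoeffding gain by taking $C$ large in terms of $r,\alpha,\eps$. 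Your version is therefore correct where the paper's stated intermediate estimate is too loose to close the argument on its own. One tiny nit: the event that $|N_{\bGs}(W)|$ is small needs no Chernoff absorption — under $\cY_W$ it is deterministic that $|N_{\bGs}(W)|\ge |N_{\bG}(W)|-t\ge (1-\eps^2)np^k - t$.
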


Given \Cref{claim:upper-bound-for-A_W-t}, it is now easy to finish the proof of \Cref{thm:neighbourhood}.
Indeed, by \eqref{eq:upper-bound-for-A_W},
\[
  \Pr(\bG \in \Core_{2d}^r(\alpha) \wedge \cA_{W} \mid \cY_W) \le (1+o(1)) \cdot e^{-ct} \ll n^{-k}
\]
due to our choice of $t$.
Substituting this bound into~\eqref{eq:neighb_proof_3} completes the proof.

\begin{proof}[Proof of \Cref{claim:upper-bound-for-A_W-t}]
  Fix some $k$-element set of vertices $W$, assume that $\cY_W$ holds, and let $\bGs$ be the graph obtained from $\bG$ by independently resampling each edge between $W$ and $\bR$ with probability $p$.
  We shall first show that, conditioned on $\bGs$, $\bR$ is a~nearly-uniformly random $k$-element subset of $\br{n} \setminus W$.

  To see this, fix some $t$-element $R \subseteq \br{n} \setminus W$ and a graph $G^*$ such that $\bGs = G^*$ with nonzero probability.
  Writing $G^*[W,R]$ for the induced bipartite subgraph of $G^*$ with parts $W$ and $R$ and $[W,R]$ for the complete bipartite graph with parts $W$ and $R$, we have
  \[
    \begin{split}
      \Pr(\bR = R \wedge \bGs = G^*)
      & = \Pr(\bG = G^* \cup [W,R]) \cdot \Pr(\bR = R \mid \bG = G^* \cup [W,R])\\
      & \qquad \cdot \Pr(\bGs[W, R] = G^*[W, R] \mid \bG = G^* \cup [W,R] \wedge \bR = R) \\
      & =  p^{|G^* \cup [W, R]|} (1-p)^{\binom{n}{2} - |G^* \cup [W,R]|} \cdot \binom{|N_{G^* \cup [W,R]}(W)|}{t}^{-1} \\
      & \qquad \cdot p^{|G^*[W, R]|} (1-p)^{|W||R| - |G^*[W, R]|} \\
      & = p^{|G^*| + kt} (1-p)^{\binom{n}{2} - |G^*|} \cdot
        \binom{|N_{G^*}(W)| - |N_{G^*}(W) \cap R| + t}{t}^{-1}.
    \end{split}
  \]
  Since $|N_\bGs(W)| \le |N_\bG(W)| \le (1+\eps^2)np^k$ and
  \[
    |N_\bGs(W)| - t \ge |N_\bG(W)| - 2t \ge (1-\eps^2)np^k - 2t \ge (1-2\eps^2)np^k,
  \]
  we have, for any two $t$-element subsets $R, R' \subseteq \br{n} \setminus W$,
  \[
    \frac{\Pr(\bR = R \mid \bGs = G^*)}{\Pr(\bR = R' \mid \bGs = G^*)}
    \le \frac{\binom{|N_{G^*}(W)| + t}{t}}{\binom{|N_{G^*}(W)|}{t}} \le \left(\frac{|N_{G^*}(W)|}{|N_{G^*}(W)| - t}\right)^t \le \left(\frac{1+\eps^2}{1-2\eps^2}\right)^t.
  \]
  In particular, for every $\cX \subseteq \binom{\br{n} \setminus W}{t}$,
  \begin{equation}
    \label{eq:hypergeom_close}
    \Pr(\bR \in \cX \mid \bGs) \le |\cX| \cdot \binom{n-k}{t}^{-1} \cdot e^{4\eps^2t}.
  \end{equation}
  
  Now, since $|\bG \triangle \bGs| \le |\bR||W| = kt = d$, \Cref{cor:core-d-nested} yields that $\bG \in \Core_{2d}^{r}(\alpha)$ implies that $\bGs \in \Core_{d}^{r}(\alpha)$ and that $\core_{d}^{r}(\bGs) \preceq \core_0^r(\bG)$.
  In particular, the event $\bG \in \Core_{2d}^r(\alpha) \wedge \cA_{W,t}$ implies the event
  \[
    |\bR \cap S^*| \le \frac{1-\eps/2}{r} \cdot t
    \quad
    \text{for some $S^* \in \core_d^r(\bGs)$},
  \]
  which we will denote by $\cA_{W,t}^*$.
  To summarise, we have
  \[
    \begin{split}
      \Pr(\bG \in \Core_{2d}^{r}(\alpha) \wedge \cA_{W,t}\mid \cY_W)
      & \le \Pr(\bGs \in \Core_d^r(\alpha) \wedge \cA_{W,t}^*) \\
      & \le \Pr(\cA_{W,t}^* \mid \bGs \in \Core_d^r(\alpha)).
    \end{split}
  \]
  Finally, we conclude from \eqref{eq:hypergeom_close} that
  \begin{multline*}
    \Pr(\cA^*_{W,t} \mid  \bGs \in \Core_{d}^r(\alpha)) \\
    \le e^{4\eps^2t} \max_{G^* \in \Core_d^r(\alpha)}
    \sum_{S^* \in \core_d^r(G^*)} \Pr\left(\text{Hyp}(n - k, |S^* \setminus W|, t) \le \frac{1-\eps/2}{r} \cdot t\right) \\
    \le e^{4\eps^2t} \cdot r \cdot \Pr\left(\text{Hyp}(n - k, (1/r-\alpha)n - k, t) \le \frac{1-\eps/2}{r} \cdot t\right) \le e^{-ct},
  \end{multline*}
  for some constant $c = c(r,\alpha,\eps) > 0$, provided that $\alpha \le \eps/(4r)$.
\end{proof}

\section{The threshold for Simonovits's theorem in $\Gnp$}
\label{sec:0-statement}
In this section, we prove \Cref{thm:0-statement-Simonovits}.
We start by setting some notational conventions.
As in \Cref{sec:rigidity}, given a collection $\Pi$ of pairwise-disjoint sets of vertices of a graph, we
denote by $\int(\Pi)$ the set of pairs of vertices contained in a single set of $\Pi$
and by $\ext(\Pi)$ the set of remaining pairs of vertices of $\bigcup \Pi$.
If $\Pi$ is a collection of sets of vertices of $K_n$, we also define
\[
  \ext^*(\Pi) \coloneqq K_n \setminus \int(\Pi)
\]
and note that $\ext^*(\Pi) \supseteq \ext(\Pi)$ and that $\ext^*(\Pi) = \ext(\Pi)$ whenever $\bigcup \Pi = K_n$.
Let $H$ be a nonbipartite, edge-critical, strictly $2$-balanced graph and denote $r \coloneqq \chi(H)-1$.
As in \cite{HosSam}, we denote by $\cH$ the collection of all copies of $H$ in
$K_n$, viewed as a hypergraph with vertex set $K_n$.
Further, for each $e \in K_n$, let
\[
  \partial_e \cH \coloneqq \{K \setminus e : e \in K \in \cH\}
\]
denote the copies of all proper, spanning subgraphs of $H$ that form a copy of $H$ together with the edge $e$.
Finally, we write $\partial_e\cH[G']$ to denote the subhypergraph of $\partial_e \cH$ that is induced by the subgraph $G' \subseteq K_n$.

Let $\bG \sim \Gnp$.
Suppose first that $1/n \ll p \ll n^{-1/m_2(H)}$.
It follows from the definition of $2$-density and Markov's inequality that whp $\bG$ contains only $o(n^2p)$ copies of $H$;
consequently, whp $\bG$ contains an $H$-free subgraph with $(1-o(1))\binom{n}{2}p$ edges.
On the other hand, \Cref{cor:equiv_classes_edges_count} implies that whp the largest size of an $r$-cut in $\bG$ is at most
\[
  \max_{\Pi : \text{$r$-cut}} |\ext(\Pi)| \cdot p + 3n\sqrt{np} = \left(1- \frac{1}{r} + o(1)\right) \binom{n}{2}p.
\]
Therefore, $\bG$ is whp not $H$-Simonovits.
We may thus assume from now on that
\[
  \eps n^{-1/m_2(H)} \le p \le (\theta_H-\eps) \cdot n^{-1/m_2(H)} \cdot (\log n)^{1/(e_H-1)}
\]
for some positive constant $\eps$.

It is clearly sufficient to prove that, with high probability, for some largest $r$-cut $\Pi$ of $\bG$ and an edge $e \in \int(\Pi) \cap \bG$, the graph $(\ext(\Pi) \cap \bG) \cup e$ is $H$-free.  We will prove a stronger statement.

Note that every largest $r$-cut $\Pi$ of $\bG$ satisfies
\[
  \ext(\Pi) = K_n \setminus \int(\Pi) \subseteq K_n \setminus \int(\core_0^r(\bG)) = \ext^*(\core_0^r(\bG)),
\]
provided that $\bG$ has a $(0,r)$-core.
The aforementioned stronger statement that implies \Cref{thm:0-statement-Simonovits} is that whp $\bG$ has a $(0,r)$-core with minimum part size $n/r-o(n)$ and there is an $e \in \int(\core_0^r(\bG)) \cap \bG$ such that $\partial_e\cH[\ext^*(\core_0^r(\bG)) \cap \bG]$ is empty;  note that this implies that $(\ext(\Pi) \cap \bG) \cup e$ is $H$-free for every largest $r$-cut $\Pi$ of $\bG$.
More precisely, set $\alpha \coloneqq (1/\log n)^2$ and define, for each $e \in K_n$, the event
\[
  \cY_e \coloneqq \big\{G \in \Core_0^r(\alpha) : e \in \int(\core_0^r(G)) \cap G \wedge \partial_e\cH[\ext^*(\core_0^r(G)) \cap G] = \emptyset\big\}.
\]
Our goal is to prove that whp $\bG \in \cY_e$ for some $e \in K_n$.
Denoting by $Z$ the number of $e \in K_n$ satisfying $\bG \in \cY_e$, it will be enough to show that
\begin{equation}
  \label{eq:0-statement-goal}
  \Ex[Z^2] \le (1+o(1)) \cdot \Ex[Z]^2.
\end{equation}
Indeed, if \eqref{eq:0-statement-goal} holds, then
\[
  \Pr(\text{$\bG \in \cY_e$ for some $e \in K_n$}) = \Pr(Z \neq 0) \ge \frac{\Ex[Z]^2}{\Ex[Z^2]} = 1-o(1)
\]
by the Paley--Zygmund inequality.

\subsection*{Proof outline}

In order to establish \eqref{eq:0-statement-goal}, we separately prove a lower bound on $\Ex[Z]$ and an upper bound on $\Ex[Z^2]$.  We obtain a lower bound on $\Ex[Z]$ using a delicate switching argument that (roughly speaking) goes as follows.  We first choose $d \gg \log n$ so that whp $\bG \in \Core_{2d}^r(\alpha)$;
this is possible thanks to \Cref{cor:core}.
Fix some $e \in K_n$ and assume that $e \in \int(\core_{2d}^r(\bG)) \cap \bG$.  Our upper-bound assumption on $p$ and the fact that $H$ is strictly $2$-balanced imply that whp $\partial_e\cH[\bG]$ is a matching of size $O(\log n)$, which in turn allows us to analyse the following `resampling' process:  Remove from $\bG$ all the edges of $\bigcup \partial_e \cH[\bG]$, denote the resulting graph by $\bGs$, and consider the conditional distribution of $\bG$ given $\bGs$.  The fact that $\partial_e\cH[\bG]$ is a matching with $o(d)$ edges allows us to infer that $\bGs \in \Core_{d}^r(\alpha)$ and to essentially couple the conditional distribution of $\partial_e\cH[\bG]$ given $\bGs$ with a $p^{e_H-1}$-random subset of $\partial_e \cH$, giving
\[
  \Pr\big(\partial_e\cH[\ext^*(\core_d^r(\bGs)) \cap \bG] = \emptyset \mid \bGs\big) \ge (1-o(1)) \cdot \left(1-p^{e_H-1}\right)^{|\partial_e\cH[\ext^*(\core_d^r(\bGs))]|}.
\]
Since $\core_d^r(\bGs)$ is a collection of $r$ pairwise-disjoint sets of size at least $n/r - \alpha n$ each, we have
\[
  |\partial_e \cH[\ext^*(\core_d^r(\bGs))]| \le (1+O(\alpha)) \cdot \Cop\big(H, K_r(n/r)^+\big) = (1+O(\alpha)) \cdot \pi_H \cdot (n/r)^{v_H-2}.
\]
Further, since $\bG$ and $\bGs$ differ in $o(d)$ edges, we have $\core_0^r(\bG) \succeq \core_d^r(\bGs)$, by \Cref{cor:core-d-nested}, and thus $\ext^*(\core_0^r(\bG)) \subseteq \ext^*(\core_d^r(\bGs))$; consequently, since $n^{v_H-2} p^{e_H-1} = O(\log n)$ and $\alpha \ll 1/\log n$,
\begin{multline*}
  \Pr\big(\partial_e\cH[\ext^*(\core_0^r(\bG)) \cap \bG] = \emptyset \mid \bG \in \Core_{2d}^r(\alpha) \wedge e \in \int(\core_{2d}^r(\bG)) \cap \bG\big) \\
  \ge \exp\left(-\pi_H \cdot (n/r)^{v_H-2} \cdot p^{e_H-1}-o(1)\right).
\end{multline*}
A lower bound on $\Ex[Z]$ now follows by multiplying the above inequality by the probability of the event in the conditioning and summing the result over all $e \in K_n$.

In order to prove an upper bound on $\Ex[Z^2]$, we adapt an elegant argument of DeMarco--Kahn~\cite{DeMKah15Tur}, which allows us to bound, for every pair $e, f$ of edges of $K_n$, the conditional probability
\[
  \Pr\big((\partial_e\cH \cup \partial_f\cH)[\ext(\core_0^r(\bG)) \cap \bG] = \emptyset \mid \bG \in \Core_0^r(\alpha) \wedge e,f \in \int(\core_0^r(\bG)) \cap \bG\big)
\]
from above by the (unconditional) probability of the same event with $\core_0^r(\bG)$ replaced by a fixed collection of $r$ pairwise-disjoint sets of at least $n/r - \alpha n$ vertices each.  The latter probability can be easily shown, using Janson's inequality, to be at most $\exp\big(-2\pi_H \cdot (n/r)^{v_H-2} \cdot p^{e_H-1} + o(1)\big)$.  An upper bound on $\Ex[Z^2]$ is then deduced in a straightforward manner by summing the above estimate over all pairs $e, f$.

\subsection*{Organisation}

The remainder of this section is organised as follows.  In \cref{sec:preliminaries}, we recall the statement of Janson's inequality, prove several useful estimates concerning the hypergraph $\partial_e\cH$, and derive estimates on the moments of $|\int(\core_d^r(\bG)) \cap \bG| \cdot \1_{\bG \in \Core_d^r(\alpha)}$ from \cref{cor:core}.  In the remaining two sections, we prove the lower bound on $\Ex[Z]$ and the upper bound on $\Ex[Z^2]$.

\subsection{Preliminaries}
\label{sec:preliminaries}

Given a set $V$ and a real $p \in [0,1]$, we denote by $V_p$ the random subset of $V$ obtained by independently retaining each element of $V$ with probability $p$.  Further, given a hypergraph $\cG$ with vertex set $V$, we define the following two quantities:
\[
  \mu_p(\cG) \coloneqq \sum_{A \in \cG} p^{|A|}
  \qquad
  \text{and}
  \qquad
  \Delta_p(\cG) \coloneqq \sum_{\substack{A, B \in \cG \\ A \neq B, A \cap B \neq \emptyset}} p^{|A \cup B|},
\]
where the second sum is over unordered pairs of edges;  in other words, $\mu_p(\cG)$ is just the expected number of edges of $\cG[V_p]$ and $\Delta_p(\cG)$ is the expected number of pairs of distinct edges of $\cG[V_p]$ that intersect.  The following well-known inequality of Janson plays a key role in our proof of the upper bound on $\Ex[Z^2]$.

\begin{thm}[{Janson's inequality~\cite{Jan90}}]
  \label{thm:Janson-inequality}
  Let $\cG$ be a hypergraph on a finite vertex set~$V$.  For all $p \in [0,1]$,
  \[
    \Pr(\cG[V_p] = \emptyset) \le \exp\big(-\mu_p(\cG)+\Delta_p(\cG)\big).
  \]
\end{thm}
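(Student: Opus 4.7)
The plan is to prove Janson's inequality by the classical chain-rule-plus-Harris argument. Enumerate the edges of $\cG$ as $A_1, \dots, A_m$ and let $B_i$ denote the (increasing) event $\{A_i \subseteq V_p\}$, so that $\Pr(B_i) = p^{|A_i|}$. The goal is to bound
\[
  \Pr\bigg(\bigcap_{i=1}^m \overline{B_i}\bigg) = \prod_{i=1}^m \Pr\bigg(\overline{B_i} \,\bigg|\, \bigcap_{j<i} \overline{B_j}\bigg)
\]
from above by $\exp(-\mu_p(\cG) + \Delta_p(\cG))$. For each index $i$, partition its predecessors into $D_i := \{j < i : A_j \cap A_i \neq \emptyset\}$ and $I_i := \{j < i : A_j \cap A_i = \emptyset\}$; the contribution to $\Delta_p(\cG)$ will come from the $D_i$'s, while the $I_i$-part will be absorbed using independence.

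The main step is to establish, for each $i$, the single-step estimate
\[
  \Pr\bigg(B_i \,\bigg|\, \bigcap_{j<i} \overline{B_j}\bigg) \ge \Pr(B_i) - \sum_{j \in D_i} \Pr(B_i \cap B_j).
\]
To prove this, I would condition on the $\sigma$-algebra $\cF_i$ generated by the indicators $\{\1_{v \in V_p} : v \notin A_i\}$. Since $A_j \subseteq V \setminus A_i$ for every $j \in I_i$, each event $\overline{B_j}$ with $j \in I_i$ is $\cF_i$-measurable, whereas $B_i$ is independent of $\cF_i$. On the event $\bigcap_{j \in I_i} \overline{B_j}$, a union bound inside the conditional probability gives
\[
  \Pr\bigg(B_i \cap \bigcap_{j \in D_i} \overline{B_j} \,\bigg|\, \cF_i\bigg) \ge \Pr(B_i) - \sum_{j \in D_i} \Pr(B_i \cap B_j \mid \cF_i),
\]
and the Harris inequality (applied to the decreasing event $\bigcap_{j \in I_i} \overline{B_j}$ and the increasing event $B_i \cap B_j$) lets me drop the conditioning on $\bigcap_{j \in I_i} \overline{B_j}$ without increasing the right-hand side once we integrate out $\cF_i$. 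Dividing by $\Pr(\bigcap_{j < i} \overline{B_j})$ then yields the displayed single-step bound.

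Combining the single-step estimate with $1 - x \le e^{-x}$ gives
\[
  \Pr\bigg(\overline{B_i} \,\bigg|\, \bigcap_{j < i} \overline{B_j}\bigg) \le \exp\bigg(-\Pr(B_i) + \sum_{j \in D_i} \Pr(B_i \cap B_j)\bigg).
\]
Taking the product over $i$, the exponents telescope to
\[
  -\sum_{i=1}^m \Pr(B_i) + \sum_{i=1}^m \sum_{j \in D_i} \Pr(B_i \cap B_j) = -\mu_p(\cG) + \Delta_p(\cG),
\]
where the last equality uses that $D_i \subseteq \{j < i\}$ so the double sum counts each unordered pair of distinct intersecting edges exactly once, matching the convention used in the definition of $\Delta_p(\cG)$.

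The main obstacle is the Harris/FKG step: one needs to argue correctly that conditioning the increasing event $B_i$ on the decreasing event $\bigcap_{j < i} \overline{B_j}$ only decreases its probability, even though $B_j$ for $j \in D_i$ and $B_k$ for $k \in I_i$ can share vertices (so the families $D_i$ and $I_i$ of predecessors are not probabilistically independent of one another). Splitting the conditioning into the $\cF_i$-part plus the union-bound over $D_i$, and invoking Harris only on the product-measure space where $B_i$ and the $I_i$-events are genuinely independent, is what makes this step go through cleanly; everything else is bookkeeping.
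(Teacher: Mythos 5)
The paper cites Janson's inequality from \cite{Jan90} and does not reprove it, so there is no in-paper argument to compare against; your task was effectively to supply a proof from scratch. Your proof is correct: it is the classical chain-rule argument (often attributed to Boppana and Spencer), in which one writes $\Pr(\bigcap_i \overline{B_i})$ as a telescoping product of conditional probabilities, lower-bounds each $\Pr(B_i \mid \bigcap_{j<i}\overline{B_j})$ by $\Pr(B_i) - \sum_{j \in D_i}\Pr(B_i \cap B_j)$ using independence from the $I_i$-part and Harris/FKG for the $D_i$-part, and then applies $1-x \le e^{-x}$ termwise.

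Two small points of exposition worth tightening. First, the inequality
\[
  \Pr\Big(B_i \cap \bigcap_{j \in D_i}\overline{B_j} \;\Big|\; \cF_i\Big) \ge \Pr(B_i) - \sum_{j\in D_i}\Pr(B_i \cap B_j \mid \cF_i)
\]
holds unconditionally (it is the inclusion--exclusion truncation together with $\Pr(B_i\mid\cF_i)=\Pr(B_i)$), so the qualifier ``on the event $\bigcap_{j\in I_i}\overline{B_j}$'' is unnecessary; what matters is that you then \emph{integrate} the inequality against $\1_{\bigcap_{j\in I_i}\overline{B_j}}$, apply Harris in the form $\Pr(B_i\cap B_j\cap\bigcap_{k\in I_i}\overline{B_k}) \le \Pr(B_i\cap B_j)\Pr(\bigcap_{k\in I_i}\overline{B_k})$, and finally divide by $\Pr(\bigcap_{j<i}\overline{B_j}) \le \Pr(\bigcap_{k\in I_i}\overline{B_k})$. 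Second, the division step as written is only valid when the quantity $\Pr(B_i) - \sum_{j\in D_i}\Pr(B_i\cap B_j)$ is nonnegative; when it is negative the single-step bound is vacuously true (and $1-x\le e^{-x}$ still applies), so the final conclusion survives, but this case should be acknowledged. With these clarifications the proof is complete and matches the paper's conventions for $\mu_p$ and $\Delta_p$ (with $\Delta_p$ a sum over unordered pairs of distinct intersecting edges, exactly what the double sum $\sum_i\sum_{j\in D_i}$ produces).
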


We will now prove lower and upper bounds on the values of $\mu_p$ and $\Delta_p$ on certain induced subhypergraphs of $\partial_e \cH$ and  $\partial_e \cH \cup \partial_f \cH$.  We start with an estimate on the sizes of subhypergraphs of $\partial_e\cH$ induced by graphs that are close to a complete, balanced, $r$-partite graph.

\begin{lemma}
  \label{lemma:copies-of-H-ext}
  Let $\alpha$ be a nonnegative real and suppose that $\cC$ is a family of $r$ pairwise-disjoint subsets of $\br{n}$ such that $|X| \ge n/r - \alpha n$ for each $X \in \cC$.
  There is a constant $C_H$ that depends only on $H$ such that, for every $e \in \int(\cC)$,
  \begin{align*}
    |\partial_e \cH [\ext(\cC)]| & \ge \big(\pi_H - C_H\alpha \big) \cdot (n/r)^{v_H-2} - C_Hn^{v_H-3}, \\
    |\partial_e \cH [\ext^*(\cC)]| & \le \big(\pi_H + C_H\alpha\big) \cdot (n/r)^{v_H-2}.
  \end{align*}
\end{lemma}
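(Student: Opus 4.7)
The plan is to view each of the two quantities as a polynomial in the part sizes $|X_1|, \dotsc, |X_r|$ (and, for the upper bound, also in $|W|$, where $W \coloneqq \br{n} \setminus \bigcup \cC$), then substitute the assumed size inequalities and compare the leading coefficients to $\pi_H$.

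Fix $e \subseteq X_1$. Since $H$ is edge-critical with $\chi(H) = r+1$, any copy of $H$ containing $e$ whose other edges lie in $\ext(\cC)$ must be supported on $\bigcup \cC$, and the partition of its vertex set induced by $\cC$ must be a proper $r$-colouring of $H \setminus e_0$, where $e_0 \in E(H)$ is the preimage of $e$, with the two endpoints of $e_0$ receiving the same colour (say colour~$1$). Counting labelled embeddings $\phi \colon V(H) \to \bigcup \cC$ realising each triple $(e_0, \chi, \phi)$ and dividing by $|\mathrm{Aut}(H)|$ yields
\[
  |\partial_e \cH[\ext(\cC)]| = \frac{2}{|\mathrm{Aut}(H)|} \sum_{(e_0, \chi)} (|X_1| - 2)_{c_1(\chi)} \prod_{i=2}^{r} (|X_i|)_{c_i(\chi)},
\]
where the sum ranges over valid pairs $(e_0, \chi)$, the notation $(m)_k = m(m-1)\cdots(m-k+1)$ denotes falling factorials, and $c_i(\chi)$ counts the free (i.e., not incident to $e_0$) vertices of $H$ that receive colour $i$ under $\chi$. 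Specialising to $|X_i| \equiv m$ recovers $\Cop(H, K_r(m)^+)$, because $K_r(m)$ is $H$-free and hence every copy of $H$ in $K_r(m)^+$ must use the added edge; the defining relation~\eqref{eq:pi_H} therefore identifies the leading coefficient of the above polynomial in the $|X_i|$'s as $\pi_H$.

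For the lower bound, substitute $|X_i| \ge n/r - \alpha n$ and apply $(1 - r\alpha)^{v_H - 2} \ge 1 - (v_H - 2) r \alpha$; the discrepancy between the falling factorials and the corresponding pure powers contributes an $O_H(n^{v_H - 3})$ additive error, giving the claimed $(\pi_H - C_H \alpha)(n/r)^{v_H - 2} - C_H n^{v_H - 3}$. For the upper bound on $|\partial_e \cH[\ext^*(\cC)]|$, the same counting framework applies, except that each free vertex of $H$ may additionally be placed into $W$, since any edge touching $W$ is automatically in $\ext^*(\cC)$. Contributions from embeddings that place no vertex in $W$ are bounded via the complementary inequality $|X_j| \le n/r + (r-1)\alpha n$ (which follows from each other part having at least $n/r - \alpha n$ vertices), giving a term $(\pi_H + O_H(\alpha))(n/r)^{v_H - 2}$; contributions placing at least one vertex in $W$ are crudely bounded per configuration by $|W|^{|S|} \cdot n^{v_H - 2 - |S|} \le r\alpha \cdot n^{v_H - 2}$, and summing over the $O_H(1)$ configurations yields another $O_H(\alpha)(n/r)^{v_H - 2}$.

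The only substantive step is the first: identifying the leading coefficient of the counting polynomial with $\pi_H$. Once the counting framework is in place, both inequalities reduce to crude polynomial Taylor expansions. The main obstacle is therefore pure bookkeeping, involving careful tracking of $\mathrm{Aut}(H)$ and the structure of proper $r$-colourings of $H \setminus e_0$ as $e_0$ ranges over the critical edges of $H$.
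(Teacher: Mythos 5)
Your argument is correct and takes essentially the same route as the paper's: both proofs compare $|\partial_e\cH[\cdot]|$ with the count $\Cop(H,K_r(m)^+)$ for appropriate $m$ and exploit the facts that this count is a polynomial of degree $v_H-2$ in $m$ whose leading coefficient is $\pi_H$, and that a bounded number of misplaced vertices only contributes $O_H(\alpha)\cdot n^{v_H-2}$. The paper packages this more compactly by observing that $\ext(\cC)\supseteq K_r(n/r-\alpha n)$ (for the lower bound) and $\ext^*(\cC)\subseteq K_r(n/r-\alpha n)\vee K_{r\alpha n}$ (for the upper bound) and then invoking the polynomiality of $\Cop(H,K_r(m)^+)$ in $m$, thereby avoiding the explicit enumeration over colourings and falling factorials that you write out; but the bookkeeping you describe is precisely what underlies that shortcut.
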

\begin{proof}
  Since $\ext(\cC) \supseteq K_r(n/r-\alpha n)$ by our assumption on $\cC$, when $e \in \int(\cC)$, we have
  \[
    |\partial_e \cH [\ext(\cC)]| \ge \Cop\big(H, K_r(n/r-\alpha n)^+\big) \ge \pi_H \cdot (n/r-\alpha n)^{v_H-2} - O(n^{v_H-3}),
  \]
  which implies the first inequality.  (The reason why we may write such explicit error term is that $\Cop(H, K_r(m)^+)$ is a polynomial of degree $v_H-2$ in $m$.)
  Further, since\footnote{We write $G_1 \vee G_2$ for the graph obtained from the disjoint union of $G_1$ and $G_2$ by adding all edges joining $V(G_1)$ and $V(G_2)$.}
  \[
    \ext^*(\cC) \subseteq K_r(n/r-\alpha n) \vee K_{r\alpha n},
  \]
  every copy of $H$ minus an edge in $\ext^*(\cC)$ that is not fully contained in $K_r(n/r-\alpha n)$ must have at least one vertex in $K_{r\alpha n}$.  Consequently, there is a constant $C_H'$ that depends only on $H$ such that, for each $e \in \int(\cC)$, 
  \[
    |\partial_e\cH[\ext^*(\cC)]| \le \Cop\big(H, K_r(n/r)^+\big) + C_H' r\alpha n \cdot n^{v_H-3} \le \pi_H \cdot (n/r)^{v_H-2} + C_H'r\alpha n^{v_H-2},
  \]
  which implies the second inequality.
\end{proof}

Our second lemma supplies an upper bound on $\Delta_p(\partial_e \cH \cup \partial_f \cH)$, and thus also on $\Delta_p(\partial_e \cH)$.  Even though this upper bound is implicit in \cite[Lemma~5.2]{HosSam}, we include a (self-contained) proof here for completeness.

\begin{lemma}
  \label{lemma:Delta-p-partial-H}
  For every pair of distinct edges $e, f \in K_n$ and all $p \ge \eps n^{-1/m_2(H)}$,
  \[
    \Delta_p(\partial_e\cH \cup \partial_f \cH) \le Cn^{-\lambda} \cdot \left(n^{v_H-2} p^{e_H-1}\right)^2
  \]
  for some positive $\lambda = \lambda(H)$ and $C = C(H,\eps)$.
\end{lemma}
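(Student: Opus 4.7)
\emph{Proof plan.} I split
\[
\Delta_p(\partial_e\cH \cup \partial_f\cH) = \Delta_p(\partial_e\cH) + \Delta_p(\partial_f\cH) + 2\Delta_p^{e,f},
\]
where $\Delta_p^{e,f} \coloneqq \sum p^{|A \cup B|}$ ranges over pairs $A \in \partial_e\cH$, $B \in \partial_f\cH$ with $A \cap B \neq \emptyset$. By the symmetry between $e$ and $f$ it suffices to bound $\Delta_p(\partial_e\cH)$ and $\Delta_p^{e,f}$. In both settings I classify contributing pairs by the isomorphism type of an auxiliary subgraph of $H$ encoding the `essential' shared edges; since there are $O_H(1)$ such types, it is enough to give a per-type bound.

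For $\Delta_p(\partial_e\cH)$, each contributing pair corresponds to distinct copies $K = A \cup e$ and $L = B \cup e$ of $H$ in $K_n$ with $|K \cap L| \ge 2$. Let $F$ be the subgraph of $K_n$ spanned by the edges of $K \cap L$: then $e \in F$, $e_F \ge 2$, $v_F \ge 3$, and $F \subsetneq H$ (because $K \ne L$), so strict $2$-balancedness yields $(e_F - 1)/(v_F - 2) < m_2(H)$. Counting by first embedding $F$ through $e$ in $O(n^{v_F - 2})$ ways and then independently extending each of $K,L$ to a copy of $H$ in $O(n^{v_H - v_F})$ ways apiece produces $O(n^{2v_H - v_F - 2})$ pairs per iso-type, each of weight $p^{|A \cup B|} = p^{2e_H - e_F - 1}$. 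Dividing by the target $(n^{v_H - 2} p^{e_H - 1})^2$ gives a ratio of $O\bigl((n p^{(e_F - 1)/(v_F - 2)})^{-(v_F - 2)}\bigr)$, which strict $2$-balancedness together with $p \ge \eps n^{-1/m_2(H)}$ reduces to $O(n^{-\lambda(H)})$ for some $\lambda(H) > 0$.

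For $\Delta_p^{e,f}$, set $K = A \cup e$, $L = B \cup f$ and let $F^*$ be the subgraph of $K_n$ spanned by the edges of $K \cap L$ other than $e$ and $f$; a short inclusion--exclusion gives $|A \cup B| = 2e_H - 2 - e_{F^*}$, and $F^* \subsetneq H$ because, viewed via $K$, it misses the edge $e$. The key refinement is that the external constraints $e \in K$ and $f \in L$ each contribute a factor $n^{-2}$ to the extension counts (since $V(e)$ and $V(f)$ are generically disjoint from $V(F^*)$), sharpening the per-iso-type count to $O(n^{2v_H - v_{F^*} - 4})$ and producing a ratio of $O(n^{e_{F^*}/m_2(H) - v_{F^*}})$. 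For $e_{F^*} \ge 2$, strict $2$-balancedness combined with $m_2(H) > 1/2$ forces $e_{F^*}/v_{F^*} < m_2(H)$, so the exponent is negative. The single-edge case $e_{F^*} = 1$, $v_{F^*} = 2$ (shared subgraph is one edge $g \notin \{e,f\}$) yields exponent $1/m_2(H) - 2$, negative precisely because the nonbipartite assumption on $H$ forces $m_2(H) \ge m_2(C_{2k+1}) = 2k/(2k-1) > 1$ for the shortest odd cycle $C_{2k+1}$ in $H$. This single-edge subcase is the main obstacle: without the sharpened pair count it would only give ratio $p^{-1}$, and one needs both the refined enumeration and the nonbipartite hypothesis to extract any power savings.
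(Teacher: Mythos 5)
Your decomposition into $\Delta_p(\partial_e\cH) + \Delta_p(\partial_f\cH) + 2\Delta_p^{e,f}$ and the treatment of $\Delta_p(\partial_e\cH)$ (classifying by the shared subgraph $J=(A\cap B)\cup e \subsetneq H$ with $e_J\ge 2$ and invoking strict $2$-balancedness) match the paper's proof exactly. Your treatment of the cross term also has the same shape, but the pair-count claim $O\big(n^{2v_H-v_{F^*}-4}\big)$ has a gap. The justification that ``$V(e)$ and $V(f)$ are generically disjoint from $V(F^*)$'' misses the genuinely worst case, which is not about $V(e),V(f)$ overlapping $V(F^*)$ separately but about $e$ and $f$ \emph{sharing a vertex} that lies in $V(F^*)$. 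Concretely, if $e=\{a,b\}$, $f=\{a,c\}$ and $F^*$ is the single edge $g=\{a,x\}$, then the pair $(K,L)$ has at most $1+(v_H-3)+(v_H-3)=2v_H-5=2v_H-v_{F^*}-3$ free vertices, not $2v_H-v_{F^*}-4$; the uniform bound is $O\big(n^{2v_H-v_{F^*}-4+|e\cap f|}\big)$, which is what the paper uses (via the global count $|V(K)\cup V(L)|-|V(e)\cup V(f)|\le 2v_H-v_{F^*}-4+|e\cap f|$, obtained from $|V(K)\cap V(L)|\ge v_{F^*}$ and $|V(e)\cup V(f)|=4-|e\cap f|$).

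This error propagates to your exponent: with the corrected count the single-edge contribution is of order $n^{|e\cap f|}/(n^2p)\le 1/(np)$, so the required inequality is $1/m_2(H)-1<0$, i.e.\ $m_2(H)>1$. You state the exponent as $1/m_2(H)-2$, which would only require the vacuous $m_2(H)>1/2$, and then (inconsistently with that exponent) invoke nonbipartiteness to get $m_2(H)>1$. You end up citing the right hypothesis, but your own arithmetic does not show it is needed, and as written your count understates the contribution by a factor of $n$. The conclusion still holds once the count is corrected, because the extra $n^{|e\cap f|}\le n$ only shifts the exponent by one and the case-by-case check (single edge needs $m_2(H)>1$ from nonbipartiteness; $e_{F^*}\ge 2$ uses $e_{F^*}/(v_{F^*}-1)<m_2(H)$) still produces a negative exponent. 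Note also that strict $2$-balancedness is not actually needed for the cross term — the non-strict inequality in the paper's Lemma~\ref{lemma:2-balanced-condition} suffices there, and the factored form $n^{|e\cap f|}/(n^{v_J}p^{e_J})\le \eps^{1-e_J}/(np)$ the paper uses makes this cleaner than your per-type exponent bookkeeping.
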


The following non-probabilistic inequality, which is \cite[Lemma~4.1]{HosSam}, encapsulates the key inequality in the proof of \cref{lemma:Delta-p-partial-H}.

\begin{lemma}[\cite{HosSam}]
  \label{lemma:2-balanced-condition}
  Let $H$ be a nonempty graph and suppose that $p \ge \eps n^{-1/m_2(H)}$ for some $\eps > 0$.
  Then, $n^{v_{H'}-2} p^{e_{H'}-1} \ge \eps^{e_{H'} - 1}$ for every nonempty subgraph $H' \subseteq H$.  Moreover, if $H$ is strictly $2$-balanced, then there exists a $\lambda > 0$ that depends only on $H$ such that $n^{v_{H'}-2} p^{e_{H'}-1} \ge \eps^{e_{H'} - 1} n^{\lambda}$ for every $H' \subseteq H$ with $1 < e_{H'} < e_H$.
\end{lemma}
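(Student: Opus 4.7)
The plan is a direct algebraic manipulation combined with the definitions. The key observation is that, from the hypothesis $p \ge \eps n^{-1/m_2(H)}$, for every subgraph $H' \subseteq H$ one has
\[
  n^{v_{H'}-2} p^{e_{H'}-1} \ge \eps^{e_{H'}-1} \cdot n^{v_{H'}-2 - (e_{H'}-1)/m_2(H)},
\]
so the entire lemma reduces to lower-bounding the exponent $\beta(H') \coloneqq v_{H'}-2 - (e_{H'}-1)/m_2(H)$.

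For the first assertion, I would split into cases according to $e_{H'}$. When $e_{H'} \ge 2$, the definition $m_2(H) = \max\{(e_F-1)/(v_F-2) : F \subseteq H,\ e_F \ge 2\}$ immediately gives $(e_{H'}-1)/(v_{H'}-2) \le m_2(H)$, hence $\beta(H') \ge 0$ and the desired bound follows. When $e_{H'} = 1$, the factor $\eps^{e_{H'}-1} = 1$ and $\beta(H') = v_{H'}-2 \ge 0$, since any graph containing an edge has at least two vertices.

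For the second assertion, strict $2$-balancedness of $H$ says that the maximum defining $m_2(H)$ is attained uniquely at $F = H$. Therefore, for every $H' \subseteq H$ with $2 \le e_{H'} \le e_H-1$ (so necessarily $H' \neq H$) one has the strict inequality $(e_{H'}-1)/(v_{H'}-2) < m_2(H)$, yielding $\beta(H') > 0$. Because $H$ has only finitely many subgraphs, taking
\[
  \lambda \coloneqq \min\{\beta(H') : H' \subseteq H,\ 1 < e_{H'} < e_H\}
\]
produces a positive constant that depends only on $H$ and works uniformly for all admissible $H'$.

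There is no substantial obstacle: the proof is essentially a rearrangement of the definitions plus a finite-minimum argument. The only mild bookkeeping concerns the degenerate case $e_{H'} = 1$ in the first part, and the observation that no proper subgraph $H' \subsetneq H$ with $e_{H'} = e_H$ can spoil the second part (such an $H'$ would have $v_{H'} < v_H$, giving $(e_{H'}-1)/(v_{H'}-2) > (e_H-1)/(v_H-2) = m_2(H)$, which contradicts the definition of $m_2(H)$ as a maximum).
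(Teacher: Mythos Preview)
Your proposal is correct and follows essentially the same route as the paper's proof: substitute the lower bound on $p$, reduce to showing that the exponent $v_{H'}-2-(e_{H'}-1)/m_2(H)$ is nonnegative (respectively positive), and read this off the definition of $m_2(H)$ (respectively of strict $2$-balancedness). The paper handles the case $e_{H'}=1$ by calling the assertion trivial there and leaves the existence of a uniform $\lambda$ implicit; your explicit treatment of both points is a harmless elaboration.
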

\begin{proof}
  Let $H'$ be a nonempty subgraph of $H$.  Since the assertion of the lemma holds vacuously if $e_{H'} = 1$, we may assume that $e_{H'}>1$.  By our assumption on $p$,
  \[
    n^{v_{H'}-2} p^{e_{H'}-1} \ge n^{v_{H'}-2} \left(\eps n^{-\frac{1}{m_2(H)}}\right)^{e_{H'}-1} = \eps^{e_{H'} - 1} n^{v_{H'}-2 - \frac{e_{H'}-1}{m_2(H)}}.
  \]
  Now, recall that the definition of $m_2(H)$ implies that $v_{H'} - 2 \ge \frac{e_{H'}-1}{m_2(H)}$ and that, when $H$ is strictly $2$-balanced, this inequality is strict unless $H' = H$. Thus, the exponent of $n$ is nonnegative and it is positive if $H$ is strictly $2$-balanced and $1 < e_{H'} < e_H$.
\end{proof}

\begin{proof}[{Proof of~\cref{lemma:Delta-p-partial-H}}]
  Observe first that
  \[
    \Delta_p(\partial_e \cH \cup \partial_f \cH) \le \Delta_p(\partial_e \cH) + \Delta_p(\partial_f \cH) + \Delta_p(\partial_e \cH, \partial_f \cH),
  \]
  where
  \[
    \Delta_p(\partial_e \cH, \partial_f \cH) \coloneqq \sum_{K \in \partial_e \cH} \sum_{\substack{K' \in \partial_f \cH \\ K \cap K' \neq \emptyset}} p^{e(K \cup K')}.
  \]
  Now, for some positive $\lambda = \lambda(H)$, $C' = C'(H)$, and $C = C(H, \eps)$
  \[
    \begin{split}
      2\Delta_p(\partial_e \cH) & = \sum_{K \in \partial_e \cH} \sum_{\substack{K' \in \partial_e \cH \setminus \{K\} \\ K \cap K' \neq \emptyset}} p^{e(K \cup K')} = \sum_{\substack{J \subseteq H \\ 2 \le e_J < e_H}} \sum_{K \in \partial_e \cH} \sum_{\substack{K' \in \partial_e \cH \setminus \{K\} \\ (K \cap K') \cup e \cong J}} p^{2e_H - e_J - 1} \\
      & \le C' \sum_{\substack{J \subseteq H \\ 2 \le e_J < e_H}} n^{2v_H-v_J-2} p^{2e_H-e_J-1} = C' \sum_{\substack{J \subseteq H \\ 2 \le e_J < e_H}} \frac{n^2p}{n^{v_J}p^{e_J}} \cdot \left(n^{v_H-2} p^{e_H-1}\right)^2 \\
      & \le Cn^{-\lambda} \cdot \left(n^{v_H-2} p^{e_H-1}\right)^2,
    \end{split}
  \]
  where the last inequality follows from~\cref{lemma:2-balanced-condition};  this estimate clearly remains true when we replace $e$ with $f$.  Similarly, for some $C' = C'(H)$ and $C = C(H, \eps)$,
  \[
    \begin{split}
      \Delta_p(\partial_e \cH, \partial_f \cH) & = \sum_{\substack{J \subseteq H \\ 1 \le e_J < e_H-1}} \sum_{K \in \partial_e \cH} \sum_{\substack{K' \in \partial_f \cH \setminus \{K\} \\ K \cap K' \cong J}} p^{2e_H - e_J - 2} \\
      & \le C' \sum_{\substack{J \subseteq H \\ 1 \le e_J < e_H}} \frac{n^{|e \cap f|}}{n^{v_J}p^{e_J}} \cdot \left(n^{v_H-2} p^{e_H-1}\right)^2
      \le \frac{C}{np} \cdot \left(n^{v_H-2} p^{e_H-1}\right)^2,
    \end{split}
  \]
  where the last inequality again follows from~\cref{lemma:2-balanced-condition}.  Since $np \ge \eps n^{1-1/m_2(H)}$ and $m_2(H) > 1$, the claimed inequality follows.
\end{proof}

\begin{lemma}
  \label{lemma:edges-int-core}
  Suppose that $\alpha, p \in (0,1/2)$ and a nonnegative integer $d$ satisfy
  \[
    \alpha \ll 1
    \qquad
    \text{and}
    \qquad
    pn \gg \max\left\{\alpha^{-4}, (d/\alpha)^2\right\}.
  \]
  Then, for all fixed $k \ge 0$ and $r \ge 2$, the random graph $\bG \sim \Gnp$ satisfies
  \[
    \Ex\left[\left|\int(\core_d^r(\bG)) \cap \bG\right|^k \cdot \1_{\bG \in \Core_d^r(\alpha)}\right] = (1+o(1)) \cdot \left(\frac{n^2p}{2r}\right)^k.
  \]
\end{lemma}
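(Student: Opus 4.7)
The strategy is a sandwich argument: I would approximate $Y := |\int(\core_d^r(\bG)) \cap \bG|$ by $|\int(\Pi) \cap \bG|$, where $\Pi = (V_1, \dotsc, V_r)$ is a canonically chosen maximum $r$-cut of $\bG$. First I would set up a high-probability good event $E$ consisting of: (i) $\bG \in \Core_d^r(\alpha)$ (which holds whp by \Cref{cor:core}); (ii) $|\bG| = (1+o(1))\binom{n}{2}p$ (Chernoff); (iii) the conclusion of \Cref{cor:equiv_classes_edges_count}; and (iv) every part of $\Pi$ has size at most $(1/r + \alpha) n$ (from \Cref{lemma:max_cut_balanced} with $d=0$, since $(n/m)^{1/4} \ll \alpha$ is a consequence of $pn \gg \alpha^{-4}$). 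The hypothesis $pn \gg \max\{\alpha^{-4}, (d/\alpha)^2\}$ ensures $\Pr(E) = 1 - o(1)$.

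On $E$, since $\Pi$ has deficit $0 \le d$, each of the $r$ core components $C_i$ lies inside some part $V_{\sigma(i)}$, so $\int(\core_d^r(\bG)) \subseteq \int(\Pi)$ and hence $Y \le |\int(\Pi) \cap \bG|$. The latter is $(1+o(1)) n^2 p/(2r)$: from below by (iii) together with $|\int(\Pi)| = (1+o(1)) n^2/(2r)$ (by convexity and (iv)), and from above by the elementary bound $b_r(\bG) \ge (1-1/r)|\bG|$, which gives $|\int(\Pi) \cap \bG| \le |\bG|/r$, combined with (ii). Setting $U := \br{n} \setminus \bigcup_i C_i$, every pair in $\int(\Pi) \setminus \int(\core_d^r(\bG))$ must have at least one endpoint in $U$; hence the nonnegative quantity $W := |\int(\Pi) \cap \bG| - Y$ satisfies $W \le \sum_{v \in U} \deg_\bG(v)$, while $|U| \le r\alpha n$ on $E$.

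For the upper bound on $\Ex[Y^k \1_{\Core_d^r(\alpha)}]$ I would use $Y \le |\bG|/r$ on $\Core_d^r(\alpha)$ and split by $E$: on $E$ the pointwise estimate gives $(1+o(1))(n^2 p/(2r))^k$, while the off-$E$ contribution is controlled via Cauchy--Schwarz, $\Ex[|\bG|^k \1_{E^c}] \le \sqrt{\Ex[|\bG|^{2k}] \cdot \Pr(E^c)} = o((n^2 p)^k)$. For the lower bound I would apply the convexity inequality $(a-b)^k \ge a^k - k a^{k-1} b$ (valid for $0 \le b \le a$) to obtain
\[
  \Ex[Y^k \1_{\Core_d^r(\alpha)}] \ge \Ex[|\int(\Pi) \cap \bG|^k \1_{E \cap \Core_d^r(\alpha)}] - k \Ex[|\int(\Pi) \cap \bG|^{k-1} W \1_{\Core_d^r(\alpha)}].
\]
The first term is $(1 - o(1))(n^2 p/(2r))^k$, so it remains to show that the second term is $o((n^2 p)^k)$.

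This last estimate is the main technical obstacle. Because $U$ depends on $\bG$ in an uncontrolled way, and because $pn$ is not assumed to exceed $\log n$ (which would be needed for a uniform bound on the maximum degree of $\bG$), one cannot use degree concentration directly. Instead, I would exploit the vertex-transitive symmetry of $\Gnp$, which, combined with $\Ex[|U|^2 \1_{\Core_d^r(\alpha)}] \le (r\alpha n)^2$, yields $\Pr(u, v \in U, \Core_d^r(\alpha)) = O(\alpha^2)$ for distinct $u, v$. A pairwise Cauchy--Schwarz then gives
\[
  \Ex\big[\1_{u, v \in U} \deg_\bG(u) \deg_\bG(v) \1_{\Core_d^r(\alpha)}\big] \le \sqrt{\Pr(u, v \in U, \Core_d^r(\alpha))} \cdot \sqrt{\Ex[\deg_\bG(u)^2 \deg_\bG(v)^2]} = O(\alpha (np)^2),
\]
using the standard moment estimate $\Ex[\deg_\bG(u)^2 \deg_\bG(v)^2] = O((np)^4)$. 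Summing over the $n^2$ pairs of vertices yields $\Ex[W^2 \1_{\Core_d^r(\alpha)}] = O(\alpha) (n^2 p)^2$, and a final Cauchy--Schwarz produces $\Ex[|\int(\Pi) \cap \bG|^{k-1} W \1_{\Core_d^r(\alpha)}] = O(\sqrt{\alpha}) (n^2 p)^k = o((n^2 p/(2r))^k)$ since $\alpha \ll 1$ and $r$ is fixed, completing the argument.
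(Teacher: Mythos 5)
Your plan is correct, but it takes a genuinely different and substantially heavier route than the paper. The paper's proof is very short: since $\core_d^r(G)$ ranges over at most $(r+1)^n$ partitions of $\br{n}$ (each vertex in one of $r$ core parts or outside), and since for any such partition $\cC$ with parts of size at least $(1/r - \alpha)n$ one has $|\int(\cC)| = n^2/(2r) + O(\alpha n^2)$, the Chernoff bound plus a union bound over all $(r+1)^n$ candidates gives that, with probability $1 - e^{-n}$, \emph{uniformly} over all such $\cC$, $|\int(\cC) \cap \bG| = n^2p/(2r) \cdot (1 + O(\alpha + (np)^{-1/2}))$. Combined with $\Pr(\bG \in \Core_d^r(\alpha)) = 1 - o(1)$ from \Cref{cor:core} (the hypotheses on $\alpha, d, p$ ensure exactly this), the indicator-weighted $k$-th moment follows immediately; the $e^{-n}$-probability exceptional event is absorbed by the trivial bound $\left|\int(\core_d^r(\bG)) \cap \bG\right| \le \binom{n}{2}$. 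This exponential union bound is the key observation you did not use, and it makes the rest of the argument collapse to three lines.

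Your alternative avoids that union bound by passing through a canonical maximum cut $\Pi$ and controlling the defect $W$ via exchangeability of $\Gnp$ and a second-moment/Cauchy--Schwarz estimate. The key steps all check out: $\int(\core_d^r(\bG)) \subseteq \int(\Pi)$ pointwise since core components are $(0,r)$-equivalent classes, giving $Y \le |\int(\Pi)\cap\bG| \le |\bG|/r$; the symmetry argument converting $\Ex[|U|^2 \1] \le (r\alpha n)^2$ into $\Pr(u,v \in U, \Core_d^r(\alpha)) = O(\alpha^2)$ for distinct $u,v$ is valid because $\Gnp$ is exchangeable and $U$ is equivariant under vertex relabelling; the fourth-moment bound $\Ex[\deg_\bG(u)^2\deg_\bG(v)^2] = O((np)^4)$ holds since $np \gg 1$; and the $(a-b)^k \ge a^k - ka^{k-1}b$ step is a clean way to convert the additive error to a moment error. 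The trade-off is clear: the paper exploits the small entropy of the space of candidate cores to get the result essentially for free, whereas your argument is more robust (it never needs the exponential Chernoff rate to beat a $(r+1)^n$ union bound) but requires several auxiliary estimates and a more intricate bookkeeping of error terms. Both are correct; the paper's is considerably shorter.
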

\begin{proof}
  Set $m \coloneqq \binom{n}{2}p$ and note that our assumption that $np \gg 1$ guarantees that whp $|\bG| \in [m/2, 3m/2]$.
  In particular, the assumed asymptotic relations between $\alpha$, $p$, and $d$ allow us to conclude from \cref{cor:core} that whp $\bG \in \Core_d^r(\alpha)$.  Since for every $G \in \Core_d^r(\alpha)$, the graph $\core_d^r(G)$ is a disjoint union of $r$ complete graphs of order at least $n/r - \alpha n$ each, we have
  \[
    \left|\int(\core_{d}^r(G))\right| = \frac{n^2}{2r} \pm O(\alpha n^2)
  \]
  and further, by the Chernoff bound and the union bound over the at most $(r+1)^n$ possible graphs $\core_{d}^r(G)$,
  \[
    \Pr\left(\forall G \in \Core_d^r(\alpha) \;\; \left||\int(\core_{d}^r(G)) \cap \bG| - \frac{n^2p}{2r}\right| = O\left(\alpha + (np)^{-1/2}\right) \cdot n^2p \right) \ge 1 - e^{-n}.
  \]
  The assertion of the lemma follows, as $\alpha + (np)^{-1/2} \ll 1$ by our assumptions.
\end{proof}

\subsection{Proof of the lower bound on $\Ex[Z]$}
\label{sec:proof-lower-bound-Z}

The following lemma, which is the main technical step in the proof of the lower bound on $\Ex[Z]$, abstracts the essence of the `resampling' procedure that we described in the proof outline presented above.  Given a hypergraph $\cG$, we denote by $\cI(\cG)$ the family of its independent sets.

\begin{lemma}
  \label{lemma:matching-resample}
  Suppose that $\cG$ is a $k$-uniform hypergraph on $V$, let $t$ be a nonnegative integer, and define
  \[
    \cM \coloneqq \{R \subseteq V : \text{$\cG[R]$ is a matching with $\le t$ edges}\}.
  \]
  Suppose further that $p \in (0,1)$, let $\bR \sim V_p$, and let $\bRs \coloneqq \bR \setminus \bigcup \cG[\bR]$.  Then, for all $\cA \subseteq \cI(\cG)$ and $A \colon \cA \to \cP(V)$, letting
  \[
    \frac{q}{1-q} \coloneqq \left(\frac{p}{1-p}\right)^k
    \qquad
    \text{and}
    \qquad
    a \coloneqq \max\big\{|\cG[A(R^*)]| : R^* \in \cA\big\},
  \]
  we have
  \[
    \Pr\big(\bR \in \cM \wedge \bRs \in \cA \wedge \cG[A(\bRs) \cap \bR] = \emptyset\big) \ge
    (1-q)^{a} \cdot \Pr(\bR \in \cM \wedge \bRs \in \cA).
  \]
\end{lemma}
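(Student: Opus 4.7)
Planned proof. Fix $R^* \in \cA$. On the event $\{\bR \in \cM,\ \bRs = R^*\}$, one has $\bR = R^* \cup \bigcup M$ where $M := \cG[\bR]$; since $\bigcup M = \bR \setminus \bRs \subseteq V \setminus R^*$, the matching $M$ is contained in $\cG[V \setminus R^*]$, has $|M| \le t$, and satisfies the identity $\cG[R^* \cup \bigcup M] = M$. Write $\mathcal{V}(R^*)$ for the family of matchings satisfying these three conditions, and set $\cE(R^*) := \cG[A(R^*)]$. With $r := (p/(1-p))^k$ (so that $1-q = 1/(1+r)$), the formula $\Pr(\bR = R^* \cup \bigcup M) = p^{|R^*|+k|M|}(1-p)^{|V|-|R^*|-k|M|}$ shows that the conditional weight of each $M \in \mathcal{V}(R^*)$ is proportional to $r^{|M|}$. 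The key observation is that, for $M \in \mathcal{V}(R^*)$ and $f \in \cE(R^*)$, one has $f \subseteq \bR$ if and only if $f \in M$: the identity $\cG[R^* \cup \bigcup M] = M$ forces this, the converse being immediate. Consequently the event $\cG[A(R^*) \cap \bR] = \emptyset$ coincides with $M \cap \cE(R^*) = \emptyset$, and because $(1-q)^{|\cE(R^*)|} \ge (1-q)^a$ for every $R^* \in \cA$, summing over $R^*$ reduces the lemma to
\[
\sum_{\substack{M \in \mathcal{V}(R^*) \\ M \cap \cE(R^*) = \emptyset}} r^{|M|} \;\ge\; (1-q)^{|\cE(R^*)|} \sum_{M \in \mathcal{V}(R^*)} r^{|M|}
\]
for each individual $R^* \in \cA$.

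I would establish this by induction on $|\cF|$, proving the more general bound $\sum_{M \in \mathcal{V}(R^*),\ M \cap \cF = \emptyset} r^{|M|} \ge (1-q)^{|\cF|} \sum_{M \in \mathcal{V}(R^*)} r^{|M|}$ for every $\cF \subseteq \cE(R^*)$. The inductive step, when a new edge $f \in \cE(R^*) \setminus \cF$ is added to $\cF$, amounts to showing that the total $r$-weight of matchings $M \in \mathcal{V}(R^*)$ with $M \cap \cF = \emptyset$ and $f \in M$ is at most $r$ times the $r$-weight of those with $M \cap \cF = \emptyset$ and $f \notin M$; this yields the desired factor $1/(1+r) = 1-q$. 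The weight comparison is witnessed by the injection $\psi(M) := M \setminus \{f\}$, which decreases $|M|$ by one. The only non-routine step is checking that $\psi(M) \in \mathcal{V}(R^*)$, namely that $\cG[R^* \cup \bigcup(M \setminus \{f\})] = M \setminus \{f\}$: this holds because $f$ is disjoint from $R^*$ and from every other edge of the matching $M$, so removing the vertices of $f$ from $R^* \cup \bigcup M$ destroys exactly the edge $f$ and no other edge of $\cG$.

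The proof is ultimately a short counting-injection argument; the main obstacle is the bookkeeping around the constraint $\cG[R^* \cup \bigcup M] = M$, which has to be preserved under deletion of a single matching edge so that the injection $\psi$ lands inside $\mathcal{V}(R^*)$. Once this is verified, the induction closes immediately and summation over $R^* \in \cA$ produces the claimed inequality.
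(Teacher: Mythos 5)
Your proof is correct, and it takes a genuinely different route from the paper's. After parameterising the event $\{\bR\in\cM,\ \bRs=R^*\}$ by the matching $M$ (equivalently by $S=\bigcup M$, as the paper does), the paper observes that the conditional law is that of a $q$-random subset $\bM\subseteq\cG$ conditioned on $\bigcup\bM\in\cS(R^*)$, and then obtains $(1-q)^{|\cG[A(R^*)]|}$ in a single stroke by invoking Harris's inequality, since both $\{M:\bigcup M\in\cS(R^*)\}$ and $\{M:M[A(R^*)]=\emptyset\}$ are decreasing families of subsets of $\cG$. You instead prove the required correlation inequality by hand: an induction on $|\cF|$ whose step compares the $r$-weights of matchings containing $f$ with those not containing $f$ via the injection $M\mapsto M\setminus\{f\}$. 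Your verification that this injection lands back inside $\mathcal{V}(R^*)$ — i.e.\ that deleting the vertices of a matching edge $f$ (disjoint from $R^*$ and from the other matching edges) destroys exactly $f$ and no other edge of $\cG$ — is the one point that needs care, and it is handled correctly; it is in fact the combinatorial content that makes the relevant family decreasing, so the two arguments are ultimately exploiting the same structure. Your version is more elementary and self-contained (no appeal to FKG), at the cost of some bookkeeping; the paper's is shorter and also makes transparent the clean probabilistic picture behind the resampling step, namely that the matching $\cG[\bR]$ behaves, given $\bRs$, like a $q$-random subhypergraph conditioned on a monotone event.
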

\begin{proof}
  Define the map $\cS \colon \cA \to \cP(\cP(V))$ by
  \[
    \cS(R^*) \coloneqq \big\{S \subseteq V \setminus R^* : \cG[S] \text{ is a matching spanning $S$ and } |\cG[R^* \cup S]| = |\cG[S]| \le t\big\}.
  \]
  Since, for every $R^* \in \cA$, the family $\cS(R^*)$ comprises precisely those sets $S \subseteq V \setminus R^*$ that are unions of edges of $\cG$ and satisfy $R^* \cup S \in \cM$, we have
  \[
    P_{R^*}(S) \coloneqq \frac{\Pr(\bR = R^* \cup S \wedge \bRs = R^*)}{\Pr(\bR \in \cM \wedge \bRs = R^*)} = \frac{1}{Z_{R^*}} \cdot \left(\frac{p}{1-p}\right)^{|S|} = \frac{1}{Z_{R^*}} \cdot \left(\frac{p}{1-p}\right)^{k \cdot |\cG[S]|}
  \]
  for all $S \in \cS(R^*)$, where
  \[
    Z_{R^*} \coloneqq \sum_{S \in \cS(R^*)} \left(\frac{p}{1-p}\right)^{|S|}
    = \sum_{S \in \cS(R^*)} \left(\frac{q}{1-q}\right)^{|\cG[S]|}
    = \sum_{\substack{M \subseteq \cG \\ \bigcup M \in \cS(R^*)}} \left(\frac{q}{1-q}\right)^{|M|}
    .
  \]
  In other words, letting $\bM$ denote the $q$-random subset of $\cG$, we have
  \[
    P_{R^*}(S) = \Pr\left(\bM = \cG[S] \mid \bigcup \bM \in \cS(R^*)\right).
  \]
  Since both $\big\{M \subseteq \cG : \bigcup M \in \cS(R^*) \big\}$ and $\big\{M \subseteq \cG : M[A(R^*)] = \emptyset\big\}$ are decreasing families of subsets of $\cG$, Harris's inequality~\cite{Har60} gives
  \begin{multline*}
    \frac{\Pr(\bR \in \cM \wedge \bRs = R^* \wedge \cG[A(R^*) \cap \bR] = \emptyset)}{\Pr(\bR \in \cM \wedge \bRs = R^*)}
    = \sum_{\substack{S \in \cS(R^*) \\ \cG[A(R^*) \cap S] = \emptyset}} P_{R^*}(S) \\
    = \Pr\big(\bM[A(R^*)] = \emptyset \mid \bigcup \bM \in \cS(R^*)\big) \ge \Pr\big(\bM[A(R^*)] = \emptyset\big) \\
    = (1-q)^{|\cG[A(R^*)]|} \ge (1-q)^a.
  \end{multline*}
  Multiplying the above inequality through by $\Pr(\bR \in \cM \wedge \bRs = R^*)$ and summing the result over all $R^* \in \cA$ gives the desired inequality.
\end{proof}

\begin{cor}
  \label{cor:Pr-Ye-lower}
  Suppose that $p \le C n^{-1/m_2(H)} (\log n)^{1/(e_H-1)}$ for some constant $C$ and let $\bG \sim \Gnp$.
  For every $e \in K_n$ and all $\alpha > 0$, letting $d \coloneqq \left\lceil \log n \right\rceil^2$, we have
  \begin{multline*}
    \Pr(\cY_e) \ge \exp\left(- \big(\pi_H + O(\alpha+p)\big) \cdot (n/r)^{v_H-2}p^{e_H-1}\right) \\
    \cdot \left(\Pr\big(\bG \in \Core_{2d}^r(\alpha) \wedge e \in \int(\core_{2d}^r(\bG)) \cap \bG\big) - o(p)\right).
  \end{multline*}
\end{cor}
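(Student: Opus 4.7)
The plan is to establish the lower bound on $\Pr(\cY_e)$ via a resampling argument that instantiates \Cref{lemma:matching-resample}. I condition on $e \in \bG$ and view $\bR := \bG \setminus \{e\}$ as a $p$-random subset of $V := K_n \setminus \{e\}$, and apply the lemma with the $(e_H-1)$-uniform hypergraph $\cG := \partial_e\cH$ on $V$. Setting $t := \lfloor d/(e_H-1) \rfloor$ and $\bRs := \bR \setminus \bigcup \cG[\bR]$, the resampled graph $\bGs := \bRs \cup \{e\}$ differs from $\bG$ by at most $(e_H-1) \cdot |\cG[\bR]| \le d$ edges whenever $\bR$ lies in
\[
\cM := \{R \subseteq V : \cG[R] \text{ is a matching with at most } t \text{ edges}\}.
\]
This edge-difference bound is precisely what is needed to apply \Cref{cor:core-d-nested} and relate the cores of $\bG$ and $\bGs$.

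To set up the lemma, I define the admissible set
\[
\cA := \{R^* \in \cI(\cG) : R^* \cup \{e\} \in \Core_d^r(\alpha) \wedge e \in \int(\core_d^r(R^* \cup \{e\}))\}
\]
and $A(R^*) := \ext^*(\core_d^r(R^* \cup \{e\}))$ for $R^* \in \cA$. Write $\cX_e := \{\bG \in \Core_{2d}^r(\alpha) \wedge e \in \int(\core_{2d}^r(\bG)) \cap \bG\}$. On $\cX_e \cap \{\bR \in \cM\}$, combining \Cref{cor:core-d-nested} (applied in both directions between $\bG$ and $\bGs$) with the monotonicity of $(d',r)$-cores in $d'$ (which holds on the $(1-e^{-n})$-probability event of \Cref{lemma:max_cut_balanced}, because the $\ge n/r - \alpha n$ top components of finer cores cannot merge at coarser deficits without exceeding the max-cut part-size bound) yields the chain $\core_{2d}^r(\bG) \preceq \core_d^r(\bGs) \preceq \core_0^r(\bG)$. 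This simultaneously gives $\bRs \in \cA$ and $\ext^*(\core_0^r(\bG)) \subseteq \ext^*(\core_d^r(\bGs)) = A(\bRs)$, so if additionally $\cG[A(\bRs) \cap \bR] = \emptyset$, then all three conditions defining $\cY_e$ hold.

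Applying \Cref{lemma:matching-resample} with $q/(1-q) = (p/(1-p))^{e_H - 1}$ yields
\[
\Pr\big(\bR \in \cM \wedge \bRs \in \cA \wedge \cG[A(\bRs) \cap \bR] = \emptyset\big) \ge (1-q)^a \cdot \Pr(\bR \in \cM \wedge \bRs \in \cA),
\]
with $a := \max_{R^* \in \cA}|\cG[A(R^*)]|$. By \Cref{lemma:copies-of-H-ext}, $a \le (\pi_H + C_H\alpha)(n/r)^{v_H-2}$; noting $q = p^{e_H-1}(1+O(p))$ and $aq^2 = O(pM)$ (using $p^{e_H-1} \le p$ since $e_H \ge 2$), I obtain $(1-q)^a \ge \exp\!\big(-(\pi_H + O(\alpha+p)) M\big)$, where $M := (n/r)^{v_H-2} p^{e_H-1}$. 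For the second factor, $\mu_p(\cG) = O(M) = O(\log n)$ and $\Delta_p(\cG) = o(1)$ (by \Cref{lemma:Delta-p-partial-H}), and since $t = \Theta(\log^2 n) \gg \log n$, Chebyshev's inequality gives $\Pr(\bR \notin \cM) = o(1)$. Chaining these bounds and multiplying by $p$ (which converts $\Pr_{\bR}$ to $\Pr$ because $\cX_e$ requires $e \in \bG$) gives
\[
\Pr(\cY_e) \ge (1-q)^a \cdot \big(\Pr(\cX_e) - p \cdot \Pr(\bR \notin \cM)\big) \ge \exp\!\big(-(\pi_H + O(\alpha+p)) M\big) \cdot \big(\Pr(\cX_e) - o(p)\big).
\]

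The main obstacle is the chain of core inclusions in the second paragraph. The first inclusion $\core_{2d}^r(\bG) \preceq \core_d^r(\bGs)$ is a clean application of \Cref{cor:core-d-nested}, but the second inclusion $\core_d^r(\bGs) \preceq \core_0^r(\bG)$ requires applying the corollary in reverse, which needs $\bGs$ to possess a $(e_T,r)$-core — a fact not directly guaranteed by the stated lemmas and apparently demanding a separate monotonicity-in-deficit argument that leans on \Cref{lemma:max_cut_balanced}.
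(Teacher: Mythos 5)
Your proposal follows the paper's proof closely in its essential structure: instantiate \Cref{lemma:matching-resample} with $\cG = \partial_e\cH$, choose the admissible family $\cA$ and the map $A$ in terms of $(d,r)$-cores, use \Cref{cor:core-d-nested} to chain cores between $\bG$ and $\bGs$, bound $a$ via \Cref{lemma:copies-of-H-ext}, and control $\Pr(\bR\notin\cM)$ via Markov and \Cref{lemma:Delta-p-partial-H}. Your two set-up differences are both inessential and correct: (i) conditioning on $e\in\bG$ and applying the resampling lemma on $K_n\setminus\{e\}$ is a harmless alternative to the paper's device of keeping $e$ inside the vertex set of $\cG$ and requiring $e\in G^*$ in the definition of $\cA$ (the factor of $p$ reappears in the right place); (ii) taking $t=\lfloor d/(e_H-1)\rfloor=\Theta((\log n)^2)$ instead of the paper's $t=\omega(n)\cdot\Ex|\partial_e\cH[\bG]|$ with $1\ll\omega\ll\log n$ still gives $(e_H-1)t\le d$ and $\Pr(|\partial_e\cH[\bG]|>t)=O(1/\log n)=o(1)$ by Markov, which is all that is needed.

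The one place where your account diverges from the paper's is your discussion of the chain $\core_{2d}^r(\bG)\preceq\core_d^r(\bGs)\preceq\core_0^r(\bG)$. The concern you raise --- that deducing $\core_d^r(\bGs)\preceq\core_0^r(\bG)$ from \Cref{cor:core-d-nested} implicitly requires knowing that $\bGs$ possesses a core at a \emph{smaller} deficit than $d$ --- is a genuine subtlety, and the paper's own write-up applies \Cref{cor:core-d-nested} directly without flagging it. But note that your claim that the first inclusion is a ``clean'' application while only the second needs extra work is not quite accurate: \emph{both} inclusions, when run through the corollary with the actual $e_T\le d$, invoke the same deficit-monotonicity principle (that a graph with a $(d,r)$-core whose components are of size $\ge n/r-\alpha n$ also has a $(d',r)$-core for all $0\le d'\le d$, with $\core_{d}^r\preceq\core_{d'}^r$), because $e_T$ need not equal $d$ exactly. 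Your proposed justification --- that the large components of the finer core cannot merge at a coarser deficit without violating the max-cut part-size bound from \Cref{lemma:max_cut_balanced} --- is the right mechanism, but as written it is incomplete: \Cref{lemma:max_cut_balanced} is stated for $\Gnm$, and $\bGs$ is not itself a uniform random graph; one must first apply the lemma to $\bG$ (to control deficit-$\le d$ cuts of $\bG$) and then transfer to $\bGs$'s maximum cuts via the $1$-Lipschitz property of $\deficit_\cdot(\Pi)$, noting that any maximum $r$-cut of $\bGs$ has deficit at most $d$ in $\bG$. If you spell out this two-step transfer, your argument closes the gap you identified; the rest of your proof is sound.
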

\begin{proof}
  Let $\omega \coloneqq \NN \to \RR$ be an arbitrary function satisfying $1 \ll \omega(n) \ll \log n$ and let
  \[
    t \coloneqq \omega(n) \cdot \Ex|\partial_e\cH[\bG]|.
  \]
  It follows from our upper-bound assumption on $p$ that, for some constants $C_1 = C_1(H)$ and $C_2 = C_2(H,C)$, we have
  \begin{equation}
    \label{eq:t-upper}
    t \le \omega(n) \cdot C_1n^{v_H-2}p^{e_H-1} \le C_2 \omega(n) \log n \le d/e_H,
  \end{equation}
  provided that $n$ is sufficiently large.
  As in \cref{lemma:matching-resample}, let
  \[
    \cM \coloneqq \big\{G \subseteq K_n : \partial_e\cH[G] \text{ is a matching with $\le t$ edges}\big\}.
  \]
  Further, let $\bGs \coloneqq \bG \setminus \bigcup \partial_e \cH[\bG]$ and define
  \[
    \cA \coloneqq \big\{G^* \subseteq K_n : G^* \in \Core_{d}^r(\alpha) \wedge e \in \int(\core_d^r(G^*)) \cap G^* \wedge \partial_e\cH[G^*] = \emptyset\big\}
  \]
  and the function $A \colon \cA \to \cP(K_n)$ by $A(G^*) \coloneqq \ext^*(\core_d^r(G^*))$ for every $G^* \in \cA$.  By \cref{lemma:copies-of-H-ext}, for some constant $C_H$ that depends only on $H$,
  \[
    \max_{G^* \in \cA}|\partial_e\cH[A(G^*)]| \le \big(\pi_H + C_H\alpha\big) \cdot (n/r)^{v_H-2} \eqqcolon a.
  \]
  Further, since $1-x \ge \exp(-x/(1-x))$ for all $x \in (0,1)$, \cref{lemma:matching-resample} applied to the $(e_H-1)$-uniform hypergraph $\partial_e \cH$ with vertex set $K_n$ yields
  \begin{multline}
    \label{eq:no-H-on-e-lower-bound}
    \Pr\big(\bG \in \cM \wedge \bGs \in \cA \wedge \partial_e\cH[A(\bGs) \cap \bG] = \emptyset\big) \\
    \ge \exp\left(-\left(\frac{p}{1-p}\right)^{e_H-1} \cdot a\right) \cdot \Pr\big(\bG \in \cM \wedge \bGs \in \cA\big).
  \end{multline}
  We now show that~\eqref{eq:no-H-on-e-lower-bound} implies the assertion of the corollary.
  
  First, since $(1-x)^{1-e_H} = 1 + (e_H-1)x + O(x^2)$ as $x \to 0$, we have
  \[
    \exp\left(-\left(\frac{p}{1-p}\right)^{e_H-1} \cdot a\right) \ge \exp\left(- \big(\pi_H + C_H\alpha\big) \cdot (1+e_Hp) \cdot (n/r)^{v_H-2} p^{e_H-1}\right).
  \]
  Second, observe that $\bG \in \cM$ implies that
  \begin{equation}
    \label{eq:G-Gs-on-cM}
    |\bGs| = |\bG| - (e_H-1) \cdot |\partial_e\cH[\bG]| \ge |\bG| - e_Ht \ge |\bG| - d,
  \end{equation}
  see~\eqref{eq:t-upper}.
  Consequently, \cref{cor:core-d-nested} implies that the event $\bG \in \Core_{2d}^r(\alpha) \cap \cM$ is contained in the event $\bGs \in \Core_{d}^r(\alpha)$ and $\int(\core_{2d}^r(\bG)) \subseteq \int(\core_d^r(\bGs))$.  We thus have
  \begin{multline*}
    \Pr\big(\bG \in \cM \wedge \bGs \in \cA\big) \ge \Pr\big(\bG \in \Core_{2d}^r(\alpha) \cap \cM \wedge e \in \int(\core_{2d}^r(\bG)) \cap \bG\big) \\
    \ge \Pr\big(\bG \in \Core_{2d}^r(\alpha) \wedge e \in \int(\core_{2d}^r(\bG)) \cap \bG\big) - \Pr(\bG \notin \cM \wedge e \in \bG).
  \end{multline*}
  Since the events $e \in \bG$ and $\bG \notin \cM$ are independent, we further have
  \[
    \Pr(\bG \notin \cM \wedge e \in \bG) = p \cdot \Pr(\bG \notin \cM) \le p \cdot \left( \Pr(|\partial_e \cH[\bG]| > t) + \Pr\big(\Delta(\partial_e\cH[\bG]) \ge 2\big)\right).
  \]
  The first probability in the right-hand side is at most $1/\omega(n)$, by Markov's inequality and the definition of $t$, whereas the second probability can be bounded using \cref{lemma:Delta-p-partial-H} as follows:
  \[
    \begin{split}
      \Pr\big(\Delta(\partial_e\cH[\bG]) \ge 2\big) & \le \Ex|\{(K,K') \in (\partial_e\cH[\bG])^2 : K \neq K', K \cap K' \neq \emptyset\}| \\
      & = 2\Delta_p(\partial_e \cH) \le Cn^{-\lambda} \cdot \left(n^{v_H-2}p^{e_H-1}\right)^2
    \end{split}
  \]
  for some positive $C=C(H,\eps)$ and $\lambda = \lambda(H)$;  since $n^{v_H-2}p^{e_H-1} = O(\log n)$ under our upper-bound assumption on $p$, we may conclude that $\Pr\big(\Delta(\partial_e\cH[\bG]) \ge 2\big) = o(1)$.
  
  Finally, since $\bG \in \cM$ implies that $|\bG| \le |\bGs| + d$, see~\eqref{eq:G-Gs-on-cM}, \cref{cor:core-d-nested} implies that the event  $\bG \in \cM \wedge \bGs \in \Core_{d}^r(\alpha)$ is contained in the event that $\bG \in \Core_{0}^r(\alpha)$ and $\int(\core_d^r(\bGs)) \subseteq \int(\core_0^r(\bG))$ (equivalently, that $\ext^*(\core_0^r(\bG)) \subseteq \ext^*(\core_d^r(\bGs)) = A(\bGs)$).  Further, since $\bGs \in \cA$ implies that $e \in \int(\core_d^r(\bGs)) \cap \bGs$, we conclude that
  \[
    \Pr(\cY_e) \ge \Pr\big(\bG \in \cM \wedge \bGs \in \cA \wedge \partial_e\cH[A(\bGs) \cap \bG] = \emptyset\big).
  \]
  The assertion of the lemma follows by combining the above inequality with~\eqref{eq:no-H-on-e-lower-bound} and the lower bounds on the two terms in the right-hand side of~\eqref{eq:no-H-on-e-lower-bound}.
\end{proof}

We are finally ready to complete the derivation of the lower bound on $\Ex[Z]$.
Since $n^{v_H-2}p^{e_H-1} = O(\log n)$, we have
\[
  \big(\pi_H + O(\alpha+p)\big) \cdot (n/r)^{v_H-2}p^{e_H-1} = \pi_H \cdot (n/r)^{v_H-2} p^{e_H-1} + o(1).
\]
Consequently, we may deduce from~\Cref{cor:Pr-Ye-lower} that
\begin{multline*}
  \Ex[Z] = \sum_{e \in K_n} \Pr(\cY_e) \ge \exp\left(-\pi_H \cdot (n/r)^{v_H-2} p^{e_H-1} - o(1)\right) \\
  \cdot \left(\Ex\left[\left|\int(\core_{2d}^r(\bG)) \cap \bG \right| \cdot \1_{\bG \in \Core_{2d}^r(\alpha)}\right]-o(n^2p)\right),
\end{multline*}
where $d \coloneqq \left\lceil \log n\right\rceil^2$.
Finally, \Cref{lemma:edges-int-core} allows us to conclude that
\begin{equation}
  \label{eq:Ex-Z-lower}
  \Ex[Z] \ge (1+o(1)) \cdot \exp\left(-\pi_H \cdot (n/r)^{v_H-2} p^{e_H-1}\right) \cdot \frac{n^2p}{2r}.
\end{equation}

\subsection{Proof of the upper bound on $\Ex[Z^2]$}
\label{sec:proof-upper-bound-Z}

Given distinct edges $e, f \in K_n$ and a family $\cC$ of $r$ pairwise-disjoint subsets of $\br{n}$, define
\begin{align*}
  I_{e,f}(\cC) & \coloneqq \big\{G \subseteq K_n : e,f \in \int(\cC) \cap G\big\}, \\
  E_{e,f}(\cC) & \coloneqq \big\{G \subseteq K_n : (\partial_e\cH \cup \partial_f\cH)[\ext(\cC) \cap G] = \emptyset\big\}.
\end{align*}
and note that, for every graph $G \subseteq K_n$,
\[
  G \in \cY_e \cap \cY_f \quad \Longrightarrow \quad G \in \Core_0^r(\alpha) \cap I_{e,f}(\core_0^r(G)) \cap E_{e,f}(\core_0^r(G)),
\]
as $\ext(\cC) \subseteq \ext^*(\cC)$ for every family $\cC$.  We may thus conclude that
\begin{equation}
  \label{eq:Ex-Z-squared}
  \Ex[Z^2] \le \Ex[Z] + \sum_{\substack{e, f \in K_n \\ e \neq f}} \Pr\big(\bG \in \Core_0^r(\alpha) \cap I_{e,f}(\core_0^r(\bG)) \cap E_{e,f}(\core_0^r(\bG))\big).
\end{equation}
Our next lemma, which is a variant of \cite[Lemma~10.2]{DeMKah15Tur}, will allow us to bound from above the probabilities in the right-hand side of~\eqref{eq:Ex-Z-squared}.

\begin{lemma}
  \label{lemma:correlation-argument}
  Let $\alpha$ be a nonnegative real and let $\fC$ be the collection of all $r$-element families $\cC$ of pairwise-disjoint subsets of $\br{n}$ satisfying $|X| \ge n/r-\alpha n$ for all $X \in \cC$.
  Suppose that, for each $\cC \in \fC$, we have an event $I(\cC)$ that is determined by $\int(\cC)$ and an event $E(\cC)$ that is determined by $\ext(\cC)$ and decreasing, and satisfies $\Pr(\bG \in E(\cC)) \le \xi$.  Then,
  \[
    \Pr\big(\bG \in \Core_{0}^r(\alpha) \cap I(\core_0^r(\bG)) \cap E(\core_0^r(\bG))\big) \le \xi \cdot \Pr\big(\bG \in \Core_{0}^r(\alpha) \cap I(\core_0^r(\bG))\big).
  \]
\end{lemma}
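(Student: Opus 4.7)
My approach is to partition the event by the value of $\core_0^r(\bG)$ and then, for each fixed $\cC \in \fC$, to apply Harris's inequality (the FKG inequality for product measures) after conditioning on the edges of $\bG$ away from $\ext(\cC)$. Since every $G \in \Core_0^r(\alpha)$ satisfies $\core_0^r(G) \in \fC$, we may write
\[
  \Pr\big(\bG \in \Core_0^r(\alpha) \cap I(\core_0^r(\bG)) \cap E(\core_0^r(\bG))\big) = \sum_{\cC \in \fC} \Pr\big(\{\core_0^r(\bG) = \cC\} \cap I(\cC) \cap E(\cC)\big),
\]
with an analogous identity (without the factor $E$) for the right-hand side of the claimed inequality. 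It therefore suffices to prove the per-$\cC$ bound
\[
  \Pr\big(\{\core_0^r(\bG) = \cC\} \cap I(\cC) \cap E(\cC)\big) \le \xi \cdot \Pr\big(\{\core_0^r(\bG) = \cC\} \cap I(\cC)\big)
\]
for each fixed $\cC \in \fC$.

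To handle a fixed $\cC$, I would decompose $\bG$ as the disjoint union of three independent random edge sets $\bG_{\int} \cup \bG_{\ext} \cup \bG_{\text{other}}$, where $\bG_{\int} \coloneqq \bG \cap \int(\cC)$, $\bG_{\ext} \coloneqq \bG \cap \ext(\cC)$, and $\bG_{\text{other}}$ consists of the remaining edges of $\bG$ (those incident to a vertex outside $\bigcup \cC$). After conditioning on the pair $(\bG_{\int}, \bG_{\text{other}})$, the indicator of $I(\cC)$ becomes a constant (as it depends only on $\bG_{\int}$), while $E(\cC)$ remains a decreasing function of $\bG_{\ext}$, which is a $p$-random subset of $\ext(\cC)$, independent of the conditioned edges.

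The crux of the argument is the following monotonicity lemma: if $G$ is any graph on $\br{n}$ with $\core_0^r(G) = \cC$ and $e \in \ext(\cC)$, then $\core_0^r(G \cup e) = \cC$. The justification is short: since $\cC$ is the core of $G$, every maximum $r$-cut of $G$ separates $\cC$, and hence crosses $e$; by \cref{fact:eq-analogous-defs}, this yields $b_r(G \cup e) = b_r(G) + 1$. A brief inspection of deficits then reveals that the maximum $r$-cuts of $G \cup e$ coincide with those of $G$, so the $(0,r)$-equivalence relation (and thus the core) is unchanged. Iterating this lemma shows that, after conditioning on $(\bG_{\int}, \bG_{\text{other}})$, the event $\{\core_0^r(\bG) = \cC\}$ is an increasing function of $\bG_{\ext}$. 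Combined with the decreasing nature of $E(\cC)$ and the product-measure structure of $\bG_{\ext}$, Harris's inequality gives
\[
  \Pr\big(\{\core_0^r(\bG) = \cC\} \cap E(\cC) \mid \bG_{\int}, \bG_{\text{other}}\big) \le \Pr\big(\{\core_0^r(\bG) = \cC\} \mid \bG_{\int}, \bG_{\text{other}}\big) \cdot \Pr(E(\cC)),
\]
where we used the independence of $E(\cC)$ from $(\bG_{\int}, \bG_{\text{other}})$. Multiplying by $\1_{I(\cC)}(\bG_{\int})$, invoking $\Pr(E(\cC)) \le \xi$, and taking expectations yields the desired per-$\cC$ inequality; summing over $\cC \in \fC$ completes the proof.

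The main obstacle is the monotonicity lemma itself. The property ``$\cC$ is the $(0,r)$-core of $\bG$'' is a global condition on $\bG$, and it is not a priori clear that, once the edges outside $\ext(\cC)$ are fixed, this property should be preserved by adding edges in $\ext(\cC)$. The monotonicity ultimately rests on the structural consequence, via \cref{fact:eq-analogous-defs}, of the assumption $e \in \ext(\cC)$ under the hypothesis that $\cC$ is the core: every max cut of $G$ necessarily crosses $e$, which forces the sets of maximum cuts of $G$ and $G \cup e$ to coincide.
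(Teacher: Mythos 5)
Your proof takes essentially the same route as the paper. Both arguments partition over the value of $\core_0^r(\bG) = \cC$, fix $\cC$, condition on the edges of $\bG$ outside $\ext(\cC)$ (your $(\bG_{\int},\bG_{\text{other}})$ is exactly the paper's $\sigma$-algebra $\cF_\cC$ generated by $(K_n\setminus\ext(\cC))\cap\bG$), observe that the event $\{\core_0^r(\bG)=\cC\}$ is increasing in the remaining edges while $E(\cC)$ is decreasing and determined by them, and apply Harris's inequality before summing over $\cC\in\fC$. The only difference is presentational: you isolate and partially justify the monotonicity of $\{\core_0^r(\bG)=\cC\}$ in $\ext(\cC)$ as a stand-alone lemma (arguing that an edge of $\ext(\cC)$ must cross every maximum $r$-cut of a graph with core $\cC$, so adding it leaves the set of maximum cuts — hence the equivalence classes and the core — unchanged), whereas the paper records the same fact in a single sentence without unwinding the step through \cref{fact:eq-analogous-defs}. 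Your more explicit framing is, if anything, a small improvement in readability, but it does not constitute a different proof.
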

\begin{proof}
  For any $\cC \in \fC$, denote by $\Core(\cC)$ the family of all $G \subseteq K_n$ such that $\core_0^r(G) = \cC$.
  We first observe that $\Core(\cC)$ is increasing in $\ext(\cC)$. Indeed, since adding to $G$ an edge of $\ext(\core_0^r(G))$ does not change the set of largest $r$-cuts, the resulting graph has the same core as $G$.
  Let $\cF_\cC$ be the $\sigma$-algebra generated by $(K_n \setminus \ext(\cC)) \cap \bG$.
  Since $I(\cC)$ is determined by $\int(\cC) \subseteq K_n \setminus \ext(\cC)$, we have
  \begin{equation}
    \label{eq:correlation-conditioning}
    \Pr\big(\bG \in \Core(\cC) \cap I(\cC) \cap E(\cC)\mid \cF_\cC\big) = \Pr\big(\bG \in \Core(\cC) \cap E(\cC) \mid \cF_\cC\big) \cdot \1_{\bG \in I(\cC)}.
  \end{equation}
  Further, since $\Core(\cC)$ is increasing in $\ext(\cC)$ and $E(\cC)$ is decreasing in $\ext(\cC)$, Harris's inequality~\cite{Har60} implies that
  \begin{equation}
    \label{eq:correlation-Harris}
    \begin{split}
      \Pr\big(\bG \in \Core(\cC) \cap E(\cC) \mid \cF_\cC\big)
      & \le \Pr\big(\bG \in \Core(\cC) \mid \cF_\cC\big) \cdot \Pr\big(\bG \in E(\cC) \mid \cF_\cC\big) \\
      & = \Pr\big(\bG \in \Core(\cC) \mid \cF_\cC\big) \cdot \Pr\big(\bG \in E(\cC)\big),
    \end{split}
  \end{equation}
  where the equality holds as $E(\cC)$ is determined by $\ext(\cC)$ and thus the event $\bG \in E(\cC)$ is independent of $\cF_\cC$.  Substituting~\eqref{eq:correlation-Harris} into~\eqref{eq:correlation-conditioning}, and using our assumption, yields
  \[
    \begin{split}
      \Pr\big(\bG \in \Core(\cC) \cap I(\cC) \cap E(\cC)\mid \cF_\cC\big)
      & \le \xi \cdot \Pr\big(\bG \in \Core(\cC) \mid \cF_\cC\big) \cdot \1_{\bG \in I(\cC)}\\
      & = \xi \cdot \Pr\big(\bG \in \Core(\cC) \cap I(\cC) \mid \cF_\cC\big).
  \end{split}
  \]
  Consequently, since $\{\Core(\cC) : \cC \in \fC\}$ is a partition of $\Core_{0}^r(\alpha)$,
  \begin{multline*}
    \Pr\big(\bG \in \Core_{0}^r(\alpha) \cap I(\core_0^r(\bG)) \cap E(\core_0^r(\bG))\big) \\
    = \sum_{\cC \in \fC} \Pr\big(\bG \in \Core(\cC) \cap I(\cC) \cap E(\cC)\big) 
    \le \xi \cdot \sum_{\cC \in \fC} \Pr\big(\bG \in \Core(\cC) \cap I(\cC)\big) \\
    = \xi \cdot \Pr\big(\bG \in \Core_{0}^r(\alpha) \cap I(\core_0^r(\bG))\big),
  \end{multline*}
  as desired.
\end{proof}

Returning to~\eqref{eq:Ex-Z-squared}, since clearly $I_{e,f}(\cC)$ is determined by $\int(\cC)$ whereas $E_{e,f}(\cC)$ is determined by $\ext(\cC)$ and decreasing, \Cref{lemma:correlation-argument} implies that
\begin{equation}
  \label{eq:Ex-Z-2-upper}
  \begin{split}
    \Ex[Z^2] & \le \Ex[Z] + \xi \cdot \sum_{\substack{e,f \in K_n \\ e \neq f}} \Pr\big(\bG \in \Core_{0}^r(\alpha) \cap I_{e,f}(\core_0^r(\bG))\big) \\
    & \le \Ex[Z] + \xi \cdot \Ex\left[\left|\int(\core_0^r(\bG)) \cap \bG\right|^2 \cdot \1_{\bG \in \Core_{0}^r(\alpha)}\right],
  \end{split}
\end{equation}
where (writing $\fC$ for the collection of all $r$-element families $\cC$ of pairwise-disjoint subsets of $\br{n}$ satisfying $|X| \ge n/r - \alpha n$ for all $X \in \cC$)
\[
  \xi \coloneqq 
  \max_{\cC \in \fC} \Pr\big(\bG \in E_{e,f}(\cC)\big)
  = \max_{\cC \in \fC} \Pr\big((\partial_e\cH \cup \partial_f\cH)[\ext(\cC) \cap \bG] = \emptyset\big).
\]
It follows from \Cref{lemma:copies-of-H-ext} that, for every $\cC \in \fC$,
\[
  \begin{split}
    \mu_p\big((\partial_e \cH \cup \partial_f \cH)[\ext(\cC)]\big)
    & = \mu_p(\partial_e \cH[\ext(\cC)]) + \mu_p(\partial_f \cH[\ext(\cC)]) \\
    & \ge 2 \big(\pi_H - O(\alpha + n^{-1})\big) \cdot (n/r)^{v_H-2} p^{e_H-1} \\
    & \ge 2\pi_H \cdot (n/r)^{v_H-2} p^{e_H-1} - o(1),
  \end{split}
\]
where we used that $n^{v_H-2} p^{e_H-1} = O(\log n)$ whereas $\alpha \ll 1/\log n$.
On the other hand, \Cref{lemma:Delta-p-partial-H} gives that, for some $\lambda = \lambda(H) > 0$ and $C = C(H, \eps)$,
\[
  \begin{split}
    \max_{\cC \in \fC} \Delta_p\big((\partial_e \cH \cup \partial_f \cH)[\ext(\cC)]\big)
    & \le \Delta_p(\partial_e \cH \cup \partial_f \cH) \\
    & \le Cn^{-\lambda} \cdot \left(n^{v_H-2} p^{e_H-1}\right)^2 = o(1).
  \end{split}
\]
Applying Janson's inequality (\Cref{thm:Janson-inequality}), we conclude that
\[
  \xi \le \exp\left(-2\pi_H \cdot (n/r)^{v_H-2} p^{e_H-1} + o(1)\right).
\]
Substituting this estimate into~\eqref{eq:Ex-Z-2-upper} and using \cref{lemma:edges-int-core}, we obtain
\[
  \Ex[Z^2] \le \Ex[Z] + (1+o(1)) \cdot \left(\exp\left(-\pi_H \cdot (n/r)^{v_H-2} p^{e_H-1}\right) \cdot \frac{n^2p}{2r}\right)^2.
\]
Finally, recalling~\eqref{eq:Ex-Z-lower}, in order to get the desired conclusion that $\Ex[Z^2] \le (1+o(1)) \cdot \Ex[Z]^2$, it is enough to argue that
\[
  \exp\left(-\pi_H \cdot (n/r)^{v_H-2} p^{e_H-1}\right) \cdot \frac{n^2p}{2r} \gg 1.
\]
To see that this is the case, note that our upper-bound assumption on $p$ and the assumption that $H$ is $2$-balanced, i.e., $m_2(H) = (e_H-1)/(v_H-2)$, gives
\[
  \pi_H \cdot (n/r)^{v_H-2} p^{e_H-1} \le \pi_H \cdot r^{2-v_H} \cdot (\theta_H-\eps)^{e_H-1} \cdot \log n \le \left(2-\frac{1}{m_2(H)}-\frac{\eps}{\theta_H}\right) \cdot \log n,
\]
whereas our lower-bound assumption on $p$ is that $n^2p \ge \eps n^{2-1/m_2(H)}$.

\appendix
\section{Proof of Proposition \ref{prop:vertices-outside-the-core}}
Let $N \coloneqq \binom{n}{2}$.
  We will show that, for all $n \ll m \le (1 - \delta)N$,
  \begin{equation}
    \label{eq:Ex-xrGi-sum}
    \Ex[x_{r}(\Gnm) + x_{r}(\Gnmp)] = \Omega\left(\sqrt{n^3/m}\right),
  \end{equation}
  which clearly implies the assertion of the proposition.
  As in the proof of \Cref{lemma:eq-d-d-plus-1}, consider the coupling of $G_1 \sim \Gnm$ and $G_2 \sim \Gnmp$ such that $e \in K_n$ is an edge chosen uniformly at random among the non-edges of $G_1$ and the edges of $G_2$.
  Recall from \Cref{fact:eq-analogous-defs} that
  \begin{equation}
    \label{eq:relation-critical-and-not-eq}
    \Pr(e \in \crit_r(G_2)) =  \Pr(e \notin \eq_0^r(G_1)).
  \end{equation}
  Writing $\partial_{G_2}(X_r(G_2))$ for the set of edges of $G_2$ with at least one endpoint in $X_r(G_2)$, we have
  \[
    |\crit_r(G_2)| \ge |\ext(\core_0^r(G_2)) \cap G_2| \ge b_r(G_2) - |\partial_{G_2}(X_r(G_2))|
  \]
  and thus
  \[
    \Pr(e \in \crit_r(G_2)) \ge \Ex\left[\frac{b_r(G_2) - |\partial_{G_2}(X_r(G_2))|}{m+1}\right].
  \]
  On the other hand,
  \[
    \Pr(e \in \eq_0^r(G_1)) \ge \Pr(e \in \int(\core_0^r(G_1))) = \Ex\left[\frac{|\int(\core_0^r(G_1)) \setminus G_1|}{N - m}\right]
  \]
  while
  \[
    \begin{split}
      |\int(\core_0 ^r(G_1)) \setminus G_1|
      & = |\int(\core_0^r(G_1))| - |\int(\core_0^r(G_1)) \cap G_1| \\
      & \ge  |\int(\core_0^r(G_1))| - \frac{m}{r}
        \ge r \cdot \binom{(n-x_r(G_1))/r}{2} - \frac{m}{r} \\
      & = \frac{(n - x_r(G_1))(n - x_r(G_1) - r)}{2r} - \frac{m}{r} \\
      & \ge \frac{N - n \cdot (x_r(G_1) + r) - m}{r},
    \end{split}
  \]
  where the first inequality is true since the edges in $\int(\core_0^r(G_1)) \cap G_1$  do not cross any maximum $r$-cut of $G_1$ and $b_r(G_1) \ge \frac{r-1}{r} \cdot m$ with probability one.
  Hence,
  \[
    \begin{split}
      \Pr(e \notin \eq_0^r(G_1))
      & \le 1- \frac{N - n \cdot (\Ex\left[x_r(G_1)\right] + r) - m}{r \cdot (N-m)} \\
      &= \frac{r-1}{r} + \frac{n \cdot (\Ex\left[x_r(G_1)\right] + r)}{r \cdot (N-m)}.
    \end{split}
  \]

  Substituting the above bounds into \eqref{eq:relation-critical-and-not-eq} and taking expectation, we conclude that
  \[
    \frac{n \cdot (\Ex[x_r(G_1)] + r)}{r \cdot (N-m)} + \frac{\Ex|\partial_{G_2}(X_r(G_2))|}{m+1} \ge \frac{\Ex[b_r(G_2)]}{m+1} - \frac{r-1}{r}.
  \]
  Since $\Ex[b_r(G_2)] = \frac{r-1}{r} \cdot m + \Theta(\sqrt{nm})$ when $n \ll m \le (1-\delta)N$, by \cite[Lemma~14]{C-OMS06},
  \begin{align}
    \label{eq:bound-on-x_r-partial}
    \frac{\Ex[x_r(G_1)]}{n} + \frac{\Ex|\partial_{G_2}(X_r(G_2))|}{m} \ge c \cdot \sqrt{\frac{n}{m}},
  \end{align}
  for some $c > 0$ that depends only on $r$ and $\delta$.
  The following claim bounds the second summand in the left-hand side of~\eqref{eq:bound-on-x_r-partial} from above by an almost-linear function of~$\Ex[x_r(G_2)]$.

  \begin{claim}
    \label{claim:lower-bound-on-partial-X}
    There exists a constant $C$ such that
    \[
      \Ex|\partial_{G_2}(X_r(G_2))| \le \frac{4m}{n} \cdot \Ex[x_r(G_2)] + C \sqrt{\frac{m}{n}} \cdot \Ex[x_r(G_2)] \log \left(\frac{en}			{\Ex[x_r(G_2)]}\right) + 1.
    \]
  \end{claim}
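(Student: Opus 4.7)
The idea is to combine concentration of $|\partial_{G_2}(S)|$ around its expectation for each fixed $S \subseteq \br{n}$ with a union bound over all possible realizations of $X_r(G_2)$. For a fixed $S$ of size $k$, the random variable $|\partial_{G_2}(S)|$ is hypergeometrically distributed with mean
\[
  \mu_k := (m+1) \cdot \frac{k(2n-k-1)}{n(n-1)} \le \frac{4mk}{n},
\]
so Bernstein's inequality yields $\Pr(|\partial_{G_2}(S)| > \mu_k + s) \le \exp(-s^2/(2\mu_k + 2s/3))$ for every $s \ge 0$. Setting $\alpha_k := 6/(\pi^2 k^2 (m+1))$, so that $\sum_{k \ge 1} \alpha_k \le 1/(m+1)$, and taking a union bound over the $\binom{n}{k} \le (en/k)^k$ sets of size $k$ for each $k \ge 1$, one may choose $s_k := 2\sqrt{\mu_k L_k} + 2L_k$, where $L_k := k\log(en/k) + \log(1/\alpha_k)$. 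With this choice the event $\mathcal{E}^c := \{\exists S \text{ with } |S| \ge 1 : |\partial_{G_2}(S)| > \mu_{|S|} + s_{|S|}\}$ has probability at most $1/(m+1)$.

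On the good event $\mathcal{E}$ one has $|\partial_{G_2}(X_r(G_2))| \le \mu_{x_r(G_2)} + s_{x_r(G_2)}$ whenever $X_r(G_2) \ne \emptyset$, while on $\mathcal{E}^c$ the trivial bound $|\partial_{G_2}(X_r(G_2))| \le m+1$ contributes at most $1$ in expectation. Therefore
\[
  \Ex|\partial_{G_2}(X_r(G_2))| \le \frac{4m}{n}\Ex[x_r(G_2)] + \Ex[s_{x_r(G_2)} \1_{x_r(G_2) \ge 1}] + 1.
\]
To bound the middle expectation, I would apply Jensen's inequality to the concave functions $k \mapsto k\log(en/k)$ and $k \mapsto k\sqrt{\log(en/k)}$ on $[0,n]$ (extended by $0$ at $k=0$), and then use $\sqrt{z} \le z$ for $z = \log(en/\Ex[x_r(G_2)]) \ge 1$ together with $\sqrt{m/n} \ge 1$ (coming from $m \ge n$) to collapse the main contributions of $s_k$ into $C'\sqrt{m/n} \cdot \Ex[x_r(G_2)] \log(en/\Ex[x_r(G_2)])$.

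The hard part will be handling the auxiliary terms of order $\sqrt{mk\log m/n}$ and $\log m$ that appear in $s_k$ from the $\log(1/\alpha_k) = O(\log m + \log k)$ portion of $L_k$. I expect these to be absorbed into the main bound via a short case analysis on the magnitude of $\Ex[x_r(G_2)]$: if $\Ex[x_r(G_2)] \ge 1$, then $\Ex[\sqrt{x_r(G_2)} \1_{x_r(G_2) \ge 1}] \le \sqrt{\Ex[x_r(G_2)]}$ by Jensen, and $\Ex[x_r(G_2)]\log(en/\Ex[x_r(G_2)])$ is bounded below by $2\sqrt{n/e}$ on $[1,n]$; if $\Ex[x_r(G_2)] < 1$, then $\Pr(x_r(G_2) \ge 1) \le \Ex[x_r(G_2)]$ and $\log(en/\Ex[x_r(G_2)]) \ge \log(en)$. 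Combined with the ambient condition $n \ll m \le (1-\delta)\binom{n}{2}$ from \Cref{prop:vertices-outside-the-core} (which forces $\sqrt{m/n} \to \infty$ and $\log m \le 2\log n$), these inequalities suffice to absorb every $\log m$ contribution into $C\sqrt{m/n} \cdot \Ex[x_r(G_2)]\log(en/\Ex[x_r(G_2)])$, completing the bound.
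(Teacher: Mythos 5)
Your proof follows essentially the same strategy as the paper's: prove a high-probability upper bound $|\partial_{G_2}(S)|\le \mu_{|S|}+s_{|S|}$ uniformly over all nonempty $S$ via a concentration inequality for the hypergeometric distribution combined with a union bound, handle the rare failure event with the trivial bound $|\partial_{G_2}(X_r(G_2))|\le m+1$, and then apply Jensen's inequality to the concave map $k\mapsto k\log(en/k)$. The paper targets failure probability $n^{-2}$ and chooses $t=C\sqrt{m/n}\,|A|\log(en/|A|)$ directly, so no extra logarithms appear; your choice $\alpha_k\propto 1/(k^2(m+1))$ instead pulls in an additive $\log(1/\alpha_k)=O(\log k+\log m)$ term in $L_k$, which you then have to absorb.

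That absorption step does go through under the standing hypothesis $n\ll m\le(1-\delta)\binom{n}{2}$, but one inequality you invoke is false: the function $g(u)=u\log(en/u)$ is increasing on $[1,n]$ (its derivative $\log(n/u)$ is nonnegative there), so its minimum on $[1,n]$ is $g(1)=\log(en)$, not $2\sqrt{n/e}$. What is actually needed to control the $\sqrt{\mu_k\log(1/\alpha_k)}$ and $\log(1/\alpha_k)$ contributions is the bound $\sqrt{u}\log(en/u)\ge\sqrt{\log n}$ on $[1,n]$ (equivalently $\sqrt{\log n/u}\le\log(en/u)$), together with $\sqrt{m/n}\to\infty$; both hold, so the misstatement is not fatal. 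Even more simply, since $\log(1/\alpha_k)\le 5\log n\le 5\,k\log(en/k)$ for $1\le k\le n$ once $m\le n^2$, one gets $L_k=O\bigl(k\log(en/k)\bigr)$ uniformly and hence $s_k\le C'\sqrt{m/n}\,k\log(en/k)$ directly (using $\sqrt{\log(en/k)}\le\log(en/k)$ and $\sqrt{m/n}\ge 1$), after which a single application of Jensen to $k\mapsto k\log(en/k)$ finishes the proof with no case split at all.
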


  We first show how \eqref{eq:bound-on-x_r-partial} and \Cref{claim:lower-bound-on-partial-X} imply \eqref{eq:Ex-xrGi-sum}.
  Substituting the upper bound on $\Ex|\partial_{G_2}(X_r(G_2))|$ into \eqref{eq:bound-on-x_r-partial}, we obtain
  \begin{equation}
    \label{eq:ExxrGi-sum-with-log}
    \Ex[x_r(G_1)] + \Ex[x_r(G_2)] + \sqrt{\frac{n}{m}} \cdot \Ex[x_r(G_2)] \log\left(\frac{en}{\Ex[x_r(G_2)]}\right) \ge  \tilde{c} \cdot \sqrt{\frac{n^3}{m}}
  \end{equation}
  for some $\tilde{c}$ that depends only on $r$ and $\delta$.
  Suppose that $\Ex[x_r(G_2)] \le \tilde{c}/3 \cdot \sqrt{n^3/m} \ll n$.
  Since the function $x \mapsto x \log\left(\frac{en}{x}\right)$ is increasing on $(0,n]$, we have
  \[
    \sqrt{\frac{n}{m}} \cdot \Ex[x_r(G_2)] \log\left(\frac{en}{\Ex[x_r(G_2)]}\right)
    \le \frac{\tilde{c}}{3} \cdot \sqrt{\frac{n^3}{m}} \cdot \sqrt{\frac{n}{m}} \log\left(\frac{3e}{\tilde{c}} \cdot \sqrt{\frac{m}{n}}\right)
    \le \frac{\tilde{c}}{3} \cdot \sqrt{\frac{n^3}{m}},
  \]
  where the second inequality holds as $m \gg n$ and $\lim_{x\to 0}x \log(1/x) = 0$.
  This means, by~\eqref{eq:ExxrGi-sum-with-log}, that $\Ex[x_r(G_1)] \ge \tilde{c}/3 \cdot \sqrt{n^3/m}$, as desired.
  We now prove the claim.

  It suffices to show that, with probability at least $1 - 1/n^2$, for every nonempty set $A \subseteq V(G_2)$,
  \begin{equation}
    \label{eq:partial-A-upper}
    |\partial_{G_2}(A)| \le \frac{m}{N} \cdot |A|n + C \sqrt{\frac{m}{n}} \cdot |A|\log\left(\frac{en}{|A|}\right),
  \end{equation}
  where $C$ is a large constant.
  Indeed, this estimate yields
  \[
    \begin{split}
      \Ex|\partial_{G_2}(X_r(G_2))|
      & \le \frac{4m}{n} \cdot \Ex[x_r(G_2)] + C \sqrt{\frac{m}{n}} \cdot \Ex\left[x_r(G_2) \log\left(\frac{en}{x_r(G_2)}\right)\right] + \frac{1}{n^2} \binom{n}{2} \\
      & \le \frac{4m}{n} \cdot \Ex[x_r(G_2)] + C \sqrt{\frac{m}{n}} \cdot \Ex[x_r(G_2)] \log\left(\frac{en}{\Ex[x_r(G_2)]}\right) + 1,
    \end{split}
  \]
  where the second inequality follows from Jensen's inequality applied to the concave function $x \mapsto x \log(en/x)$.

  Note that, for every nonempty set $A \subseteq V(G_2)$, the random variable $|\partial_{G_2}(A)|$ has hypergeometric distribution with mean at most $m/N \cdot |A|n$.
  In particular, standard tail estimates yield (writing $p = m/N$ and $|A| = k$), for every $t \ge 0$,
  \[
    \begin{split}
      \Pr\left(|\partial_{G_2}(A)| \ge \frac{m}{N} \cdot |A|n + t\right)
      &\le \Pr\bigl(\Bin(kn, p) \ge p \cdot kn + t\bigr) \le \exp\left(\frac{-t^2}{2knp + t}\right) \\
      &\le \exp\left(\frac{-t^2}{4knp}\right) + \exp\left(-\frac{t}{2}\right).
    \end{split}
  \]
  Invoking the above estimate with $t \coloneqq C \sqrt{m/n} \cdot |A| \log(en/|A|)$, we conclude that the probability that \eqref{eq:partial-A-upper} fails for some nonempty set $A$ is at most
  \begin{multline*}
    \sum_{k=1}^n \binom{n}{k} \left(\exp\left(-\frac{C^2kN \bigl(\log(en/k)\bigr)^2}{4n^2}\right) + \exp\left(-Ck\log\left(\frac{en}{k}\right)\right)\right) \\
    \le 2 \sum_{k=1}^n \left(\frac{en}{k}\right)^{k-Ck} \le n^{-2},
  \end{multline*}
  where the last inequality is true whenever $C$ is sufficiently large.
\qed

\bibliographystyle{abbrv}
\bibliography{bib.bib}

\end{document}